\newtheorem{theorem}{Theorem}[section]
\newtheorem{lemma}[theorem]{Lemma}
\newtheorem{conjecture}[theorem]{Conjecture}
\newtheorem{problem}[theorem]{Open Problem}
\newtheorem{proposition}[theorem]{Proposition}
\newtheorem{corollary}[theorem]{Corollary}
\theoremstyle{definition}
\newtheorem{definition}[theorem]{Definition}
\newtheorem{example}[theorem]{Example}
\newtheorem{exercise}{Exercise}[section]
\theoremstyle{remark}
\newtheorem{remark}[theorem]{Remark}
\newcommand{\bfb}{\ensuremath{\mathbf{b}}}
\newcommand{\bfc}{\ensuremath{\mathbf{c}}}
\newcommand{\bfm}{\ensuremath{\mathbf{m}}}
\newcommand{\bfn}{\ensuremath{\mathbf{n}}}
\newcommand{\bfv}{\ensuremath{\mathbf{v}}}
\newcommand{\bfw}{\ensuremath{\mathbf{w}}}
\newcommand{\bfx}{\ensuremath{\mathbf{x}}}
\newcommand{\bfy}{\ensuremath{\mathbf{y}}}
\newcommand{\bbP}{\ensuremath{\mathbb{P}}}
\newcommand{\bbF}{\ensuremath{\mathbb{F}}}
\newcommand{\bbK}{\ensuremath{\mathbb{K}}}
\newcommand{\bbA}{\ensuremath{\mathbb{A}}}
\newcommand{\bbZ}{\ensuremath{\mathbb{Z}}}
\newcommand{\bbC}{\ensuremath{\mathbb{C}}}
\newcommand{\bbQ}{\ensuremath{\mathbb{Q}}}
\newcommand{\bbR}{\ensuremath{\mathbb{R}}}
\newcommand{\bbN}{{\mathbb{N}}}
\newcommand{\fm}{\ensuremath{\mathfrak{m}}}
\newcommand{\cL}{\ensuremath{\mathcal{L}}}
\newcommand{\cI}{\ensuremath{\mathcal{I}}}
\newcommand{\cO}{\ensuremath{\mathcal{O}}}
\newcommand{\cT}{\ensuremath{\mathcal{T}}}
\newcommand{\cQ}{{\mathcal{Q}}}
\newcommand{\cD}{{\mathcal{D}}}
\newcommand{\cB}{{\mathcal{B}}}
\newcommand{\Pic}{\ensuremath{\operatorname {Pic}}}
\newcommand{\NE}{\ensuremath {\operatorname {NE}}}
\newcommand{\NS}{\ensuremath {\operatorname {NS}}}
\newcommand{\Nef}{\ensuremath{\operatorname {Nef}}}
\newcommand{\Mor}{\ensuremath{\overline{\operatorname {NE}}}}
\newcommand{\cent}{\ensuremath{\operatorname {center}}}
\newcommand{\Spec}{\ensuremath{\operatorname {Spec}}}
\newcommand{\Proj}{\ensuremath{\operatorname {Proj}}}
\newcommand{\ord}{\ensuremath{\operatorname {ord}}}
\newcommand{\mult}{\ensuremath{\operatorname {mult}}}
\newcommand{\vol}{\ensuremath{\operatorname {vol}}}
\newcommand{\co}{\ensuremath{\operatorname {co}}}
\newcommand{\Div}{\ensuremath{\operatorname {Div}}}
\newcommand{\CDiv}{\ensuremath{\operatorname {CDiv}}}
\newcommand{\Eff}{\ensuremath{\operatorname {Eff}}}
\newcommand{\trdeg}{\ensuremath{\operatorname{tr.deg}}}
\newcommand{\GL}{\ensuremath{\operatorname {GL}}}
\def\vv{\@ifnextchar[{\@withv}{\@withoutv}}
\def\@withv[#1]#2#3{v_{#2,#3}(#1)}
\def\@withoutv#1#2{v_{#1,#2}}
\def\mm{\@ifnextchar[{\@withm}{\@withoutm}}
\def\@withm[#1]#2#3{\mu_{#1}(#2,#3)}
\def\@withoutm#1#2{\widehat\mu(#1,#2)}
\title{Nagata type statements}
\author{Joaquim Roé and Paola Supino}
\date{}
\begin{document}
\maketitle
\begin{abstract}
Nagata solved Hilbert's 14-th problem in 1958
in the negative. The solution naturally lead him
to a tantalizing conjecture that remains widely open
after more than half a century of intense efforts.
Using Nagata's theorem as starting point, and the
conjecture, with its multiple variations, as motivation,
we explore the important questions of finite generation
for invariant rings, for support semigroups of multigraded
algebras, and for Mori cones of divisors on blown up surfaces,
and the rationality of Waldschimdt constants.
Finally we suggest a connection between the Mori cone
of the Zariski--Riemann space and the continuity of
the Waldschmidt constant as a function on the space
of valuations.

These notes correspond to the course of
the same title given by the first author in the workshop
``Asymptotic invariants attached to linear series''
held in the Pedagogical University of Cracow from May 16 to 20, 2016.
\end{abstract}

\tableofcontents

\maketitle

\section*{Introduction}
Hilbert's 14-th problem on finite generation of   algebras that are invariant under the action of some groups was formulated in the middle of La Belle \'{E}poque as an algebraic question. A few decades later, just when pop art was sprouting and rock--and--roll  music was turning to the surf--rock music, Zariski translated it into a geometric counterpart asking when the total coordinate ring of a projective variety is finitely generated, and Nagata gave a negative answer to it producing certain blowups of projective spaces, which couldn't but spur a renewed interest on the subject.

 The first section of these notes is devoted to Nagata's results
 from a modern point of view, taking into account contributions by
 Mukai, Ciliberto-Miranda and Ciliberto-Harbourne-Miranda-Roé.
 We show how one can construct  a group $G$, associated to $n$ points of
 the complex projective plane with multiplicities (a fat point scheme).
 An  action of $G$  on the  ring of polynomials in $2n$ indeterminates is
 then given, such that the algebra of invariants of $G$ is the Rees algebra
 of the ideal $I$ of the scheme of points: it is the direct sum of all the symbolic powers of $I$, thus, it is naturally a bigraded algebra.
 Its support, that is,  the subset of indices such that the corresponding
 addend is not trivial, is a semigroup. In the case that  this semigroup
 turns out not to be  finitely generated, then the same holds for the
 algebra.
  One can study the  real convex cone spanned by the semigroup: as Nagata observed, if it is not closed, then the semigroup, and hence the algebra, cannot be finitely generated. For suitable choices of the points and their multiplicities, this is exactly the case.

The second section is devoted to the Mori cone of
curves on the blowup of the $n$ points, following the work
of Waldschmidt, Demailly, Harbourne, de Fernex and
Ciliberto-Harbourne-Miranda-Roé.
We describe how the real cone of the first section
can be understood as a slice of the Mori cone,
and then Nagata's conjecture
can be interpreted as a statement on the boundary of this cone.
Numerical invariants such as Waldschmidt constants or Seshadri
constants, which control slopes of certain extremal rays
in the Mori cone, then come into play,
leading to the question of existence of irrational
Waldschmidt and Seshadri constants
and to the quest for extremal rays in the Mori cone.

Analogous statements can be made considering valuations as
generalizations of points. This point of view
was initiated by Dumnicki-Harbourne-Küronya-Roé-Szemberg,
and the last two sections are devoted to this subject.
It leads to
conjectures that make sense for real values
$t\geq 1$ of the number of points rather than integral ones,
and to the study of cones of effective \emph{b--divisors} on the
Zariski--Riemann space of the projective plane.

\subsection*{Acknowledgements}
We warmly thank the organizers of the workshop
``Asymptotic invariants attached to linear series'' in Cracow in May, 2016, which gave to us the opportunity of working in a friendly stay, and all the participants  for stimulating discussions. In particular we thank B.~Harbourne for sharing the notes \cite{Har16} of his course,
available in this volume.
We are also grateful to the Simons Foundation, MNiSW and IM PAN for financial support. Joaquim Roé was
partially supported by MTM 2013-40680-P (Spanish MICINN grant)
and 2014 SGR 114 (Catalan AGAUR grant).
\section{Nagata's theorem and conjecture}

\subsection{Nagata's Theorem}
\subsubsection*{Hilbert's 14-th problem}
Let $k$ be a field, let $z_1,\ldots, z_{\mu}$ be indeterminates over $k$ and let
$\bbK$ be an \emph{intermediate field} between $k$ and $k(z_1,\ldots, z_{\mu})$, i.e.
\[ k\;\subseteq\; \bbK\; \subseteq\; k(z_1,\ldots, z_{\mu}).\]

Hilbert's 14-th problem asks: \emph{is $\bbK\cap k[z_1,\ldots, z_{\mu}]$ a finitely generated $k$-algebra?}

Hilbert had in mind the following situation coming from invariant theory. Let $G$ be a subgroup of the \emph{affine group}, i.e. the group of automorphisms of  $\bbA_k^{\mu}$. Then $G$ acts as a set of automorphisms of the $k$-algebra $k[z_1,\ldots, z_{\mu}]$, hence on $k(z_1,\ldots, z_{\mu})$,   and we let $\bbK=k(z_1,\ldots, z_{\mu})^G$ be the field of $G$-invariant elements. Then the question is: \emph {is
\[ k[z_1,\ldots, z_{\mu}]^G=\bbK \cap k[z_1,\ldots, z_{\mu}]\]
a finitely generated $k$-algebra?}

In the case ${\mu}=1$, Hilbert's problem has trivially an
affirmative answer.
The answer is also affirmative for ${\mu}=2$, as proved by Zariski in \cite{Zar54}.
In \cite{Nag58}, \cite {Nag59}, Nagata provided counterexamples to the latter
formulation of Hilbert's problem. Nagata's \emph{minimal counterexample}
has ${\mu}=32$ and $\trdeg(\bbK/k)=4$.
Several other counterexamples have been given by various authors, too long a
story to be reported on here. The most recent one is due to Totaro  \cite {Tot08}, who
shows that Nagata's construction and some of its variations work even
over a finite field $k$.

We now give a streamlined review of Nagata's counterexample drawing
on the more general constructions of Mukai \cite{Muk01} and
Ciliberto--Harbourne--Miranda--Ro\'e \cite{CHMR13}.
For the sake of simplicity, we fix the base field to
be the complex numbers, $k=\bbC$.
\medskip

\subsubsection*{Nagata's group action}
  Let $P=(p_{ij})_{1\le i\le 3;1\le j\le n}$ be a $3\times n$ matrix
  of complex numbers.
  Its columns determine $n$ points $p_1,\ldots, p_n$ in
  $\bbP^{2}$; we will assume that they are $n$ distinct points,
  not all on a hyperplane (in particular, $\operatorname{rank} (P)=3$).
 The $(n-3)$--dimensional linear subspace $K=\ker (P)$ of $\bbC^n$
 formed by all vectors $\bfb=(b_1,\ldots,b_n)$ such that
$P\cdot \bfb={\bf 0}$,   is said to be \emph{associated} to $p_1,\ldots, p_n$.

Let $\bfx=(x_1,\ldots, x_n)$ and $\bfy=(y_1,\ldots, y_n)$ be vectors of indeterminates,
and consider the polynomial ring $\bbC[\bfx,\bfy]$ (so that ${\mu}=2n)$.
Initially \cite{Nag58}, Nagata considered the
\emph{unipotent} action of $K$ on $\bbC[\bfx,\bfy]$ given by
\begin{equation}
\begin{aligned}
\label{unipotent}
\bfb (x_i)&=x_i\\
\bfb (y_i)&=y_i+b_ix_i, \;  {\text {for}}\; 1\le i\le n.
\end{aligned}
\end{equation}
For adequate choices of $n$ and $P$
the $\bbC$-algebra $\bbC[\bfx,\bfy]^G$ is not finitely
generated, as we shall see.

Later, in \cite{Nag59}, with the goal of obtaining examples with smaller
transcendence degree, Nagata considered the action of a larger group,
which we now introduce in the generalized form of \cite{CHMR13}.
Fix  a  vector $\bfv=(v_1,\ldots, v_n)$ of positive integers
(``multiplicities'') and consider the following subgroup of the multiplicative group $(\bbC^*)^n$:
\[
H_\bfv=\{(c_1,\ldots,c_n)\in (\bbC^*)^n \,|\,c_1^{v_1} \cdots c_n^{v_n}=1\}.
\]
Given $\bfc \in H_\bfv$ and $\bfb \in K$, set
\begin{align}\label{biggroup}
\begin{split}
 \sigma_{\bfc,\bfb} (x_i)&=c_i\,x_i,\\
\sigma_{\bfc,\bfb} (y_i)&=\frac{c_i}{c_1\cdots c_n}(y_i+b_ix_i), \;
{\text {for}}\; 1\le i\le n.
\end{split}
\end{align}
This defines a semidirect product $G=H_\bfv\ltimes K$,
a $(2n-4)$--dimensional subgroup of
$(\bbC^*)^n\ltimes \bbC^n$, acting linearly on
$\bbC[\bfx,\bfy]$; here $\sigma_{\bfc,\bfb}$ is
the image in $\GL_{2n}(\bbC)$ of an element in $G$.
We shall identify the groups $H_\bfv$ and $K$ with their
isomorphic images $H_\bfv\times\{0\}$ and
$\{1\}\times K$ in $G=H_\bfv\ltimes K$.

Again, for adequate choices of $n$, $\bfv$, and $P$,
the $\bbC$-algebra $\bbC[\bfx,\bfy]^G$ is not finitely
generated.

\begin{exercise}
The semidirect product $G$ is determined by
an action
$\phi:H_\bfv\rightarrow \operatorname{Aut}K$ of $H_\bfv$ on $K$.
Make  this action and the resulting product
$G=H_\bfv\ltimes_\phi K$ explicit.
Nagata in \cite{Nag59} considered the case
$v_1=\cdots=v_n=1$. Show that this leads to the trivial
action, and hence to the direct product $G=H_\bfv\times K$.
\end{exercise}

In order to prove non finite generation,
Nagata's key insight is to identify $k[\bfx,\bfy]^G$ with a
graded algebra built from plane geometry; the kind of algebra that
will be the main object of study in these notes.
The proof then proceeds in two steps. First,
sufficient conditions are found  for the algebra to be
non-finitely generated, expressible in terms of the existence of curves
in the projective plane with given degree and multiplicities at the points $p_j$.
The second step consists in actually showing that such sufficient
conditions are satisfied for adequate choices of $n$ and $\bfv$,
if $P$ is general enough.

The construction can be carried over
using a matrix $P$ with $r\ge 3$ rows, leading to other counterexamples
to Hilbert's 14-th problem
related to the geometry of projective $(r-1)$--space.
This generalization is due to Mukai, who used it in \cite{Muk01}
to show counterexamples where the group acting on
$\bbC[\bfx,\bfy]$ is $K\cong \bbC^k$ for any $k\ge3$.
It is not known whether there exist counterexamples
for the group $\bbC^2$, while there are none for $\bbC$ by
Weitzenböck's result \cite{Wei32}.

\subsubsection*{The invariant ring of the unipotent action as a Rees algebra}
To describe the connection with geometry,
let us fix some additional notation.
Choose coordinates $\bfw=(w_1,w_2,w_3)$ on $\bbP^2$, 
so that
$\mathbb{C}[\bfw]=\mathbb{C}[\bbP^2]$ is the homogeneous coordinate ring of
$\bbP^2_{\mathbb{C}}$, and call $I(p_j)\subset \mathbb{C}[\bbP^2]$ the
homogeneous ideal of the point $p_j=[p_{1j}, p_{2j}, p_{3j}]$
for $j=1,\dots,n$, where $p_j\neq p_h$ for $j\neq h$. For an arbitrary vector of multiplicities
$\bfm=(m_1,\dots,m_n)$, by  abuse of language, and consistently with the notation in \cite{Har16}, in the rest of the paper we denote by $Z_{\bfm}=\sum_{j=1}^n m_jp_j$
the 0-dimensional subscheme of
$ \bbP^2_{\mathbb{C}}$ (a \emph{fat points scheme}) determined
by the homogeneous ideal
\(
I(Z_\bfm)=\bigcap_{j=1}^n I(p_j)^{m_j}.
\)

For any homogeneous ideal $I$  in a given graded ring,
denote as customary $I_t$ its homogeneous component in degree $t$.

Since the monomials $x_j$ are invariant under the unipotent action
\eqref{unipotent} of the
associated space $K$, this action can be extended to the ring
$\mathbb{C}[\bfx,\bfy] [x_1^{-1},\ldots,x_n^{-1}]$.
Here $K$ acts by translation, so the invariant ring
$\mathbb{C}[\bfx,\bfy] [x_1^{-1},\ldots,x_n^{-1}]^{K}$
can be immediately computed: it is generated by
\[
\varpi_1=\sum_{j=1}^n p_{1j} y_j/x_j; \quad \varpi_2= \sum_{j=1}^n p_{2j} y_j/x_j;\quad \varpi_3= \sum_{j=1}^n p_{3j} y_j/x_j
\]
over the ring $\mathbb{C} [x_1^{\pm1},\ldots,x_n^{\pm1}]$.
The  elements
\begin{equation}
\label{defw}
\begin{aligned}
\varpi_1 \cdot x_1\cdots x_n&=
p_{1,1}y_1x_2\cdots x_n+p_{1,2}x_1y_2x_3\cdots x_n+
\dots+p_{1,n}x_1 \cdots x_{n-1}y_n,\\
\varpi_2 \cdot x_1\cdots x_n&=
p_{2,1}y_1x_2\cdots x_n+p_{2,2}x_1y_2x_3\cdots x_n+
\dots +p_{2,n}x_1 \cdots x_{n-1}y_n,\\
\varpi_3 \cdot x_1\cdots x_n&=
p_{3,1}y_1x_2\cdots x_n+p_{3,2}x_1y_2x_3\cdots x_n+
\dots+p_{3,n}x_1 \cdots x_{n-1}y_n
\end{aligned}
\end{equation}
 are independent linear combinations of the obviously
algebraically independent elements
\[y_1x_2\cdots x_n,\quad x_1y_2\cdots x_n,\quad
\ldots,\quad x_1 \cdots x_{n-1}y_n,\]
so they are algebraically independent;
we identify them with the coordinates $w_1, w_2, w_3$, i.e.,
\[w_1=\varpi_1 \cdot x_1\cdots x_n, \quad
w_2=\varpi_2 \cdot x_1\cdots x_n, \quad
w_3=\varpi_3 \cdot x_1\cdots x_n.\]
Since these elements belong to  $\mathbb{C}[\bfx,\bfy]$ and are invariant  under $K$,
they realize $\bbC[\bbP^2]=\bbC[w_1,w_2,w_3]$
as a subring of $\mathbb{C}[\bfx,\bfy]^{K}$.
Then
\[
(\mathbb{C}[\bfx,\bfy] [x_1^{-1},\ldots,x_n^{-1}])^{K}=
\bbC[\bbP^2] [x_1^{\pm 1},\ldots,x_n^{\pm 1}]
\]
and thus
\begin{equation}\label{Ginv}
\mathbb{C}[\bfx,\bfy]^{K}=
(\mathbb{C}[\bfx,\bfy] [x_1^{-1},\ldots,x_n^{-1}])^{K}\cap \mathbb{C}[\bfx,\bfy] =
\bbC[\bbP^2] [x_1^{\pm 1},\ldots,x_n^{\pm 1}]\cap \mathbb{C}[\bfx,\bfy].
\end{equation}

\begin{remark}\label{changevar}
The identification of the three forms in \eqref{defw} with
$w_1, w_2, w_3$, and the identification of the columns of
$P$ with points in $\bbP^2$ are mutually consistent with
respect to changes of variables.
Indeed, given an invertible matrix
$A\in\operatorname{GL}_3$, consider
$P'=AP$, which has
the same associated space $K$,
hence the same invariant ring $\bbC[\bfx,\bfy]^K$.
The new invariant elements
$\bfw'=(w'_1,w'_2,w'_3)$ generate the same
invariant subring $\bbC[\bbP^2]$, as they satisfy $\bfw'=A\bfw$.
\end{remark}

For $j=1,\ldots,n$, let
$V_j$ be the linear space of homogeneous elements of $\bbC[\bbP^2]$ of degree 1
that are divisible by $x_j$ (in $\bbC[\bfx,\bfy]$).
Equivalently, $V_j$ is the linear subspace of $\bbC[\bbP^2]_1$
formed by elements whose coefficient in the monomial $(x_1\cdots x_n)y_j/x_j$
vanishes.

\begin{lemma}
\label{ }
A degree $d$ homogeneous polynomial $F$ in
$\bbC[\bbP^2]$ vanishes  on $p_j$ with multiplicity
$m_j$ if and only if it  belongs to
\[[{(V_j)^{m_j}}]_d=[{(x_j)^{m_j}\cap \bbC[\bbP^2]}]_d.\]
Moreover, in this case $F/x_j^{m_j}$ is in $\mathbb{C}[\bfx,\bfy]^K$.
 \end{lemma}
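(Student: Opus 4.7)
The plan is to verify the lemma in two steps, after unpacking the structure of $V_j$. A general linear form $\sum_{k=1}^{3}a_kw_k\in\bbC[\bbP^2]_1$ expands in $\bbC[\bfx,\bfy]$, via \eqref{defw}, as $\sum_{i=1}^{n}\bigl(\sum_k a_kp_{k,i}\bigr)y_i\Pi_i$ with $\Pi_i:=x_1\cdots\widehat{x_i}\cdots x_n$. Since $\Pi_j$ is the only monomial in this sum not divisible by $x_j$, the condition $\sum_k a_kw_k\in V_j$ is equivalent to $\sum_ka_kp_{k,j}=0$, i.e., to the linear form vanishing at $p_j\in\bbP^2$. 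Thus $V_j$ is the $2$-dimensional space of linear forms through $p_j$, the ideal it generates inside $\bbC[\bbP^2]$ equals $I(p_j)$, and $(V_j)^{m_j}=I(p_j)^{m_j}$. The first asserted equivalence then reduces to the standard fact that a degree-$d$ homogeneous form vanishes at $p_j$ with multiplicity $m_j$ iff it lies in $[I(p_j)^{m_j}]_d$.

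The substantive point is the equality $[(V_j)^{m_j}]_d=[(x_j)^{m_j}\cap\bbC[\bbP^2]]_d$. The inclusion $\subseteq$ is immediate since every element of $V_j$ is divisible by $x_j$ in $\bbC[\bfx,\bfy]$. For $\supseteq$, pick $L_0\in\bbC[\bbP^2]_1$ with $L_0(p_j)\ne0$ and a basis $L_1,L_2$ of $V_j$; then $\bbC[\bbP^2]=\bbC[L_0,L_1,L_2]$ and every $F\in\bbC[\bbP^2]_d$ admits an expansion $F=\sum_{a+b+c=d}\alpha_{abc}L_0^aL_1^bL_2^c$. Write $L_k=x_jM_k$ for $k=1,2$ and $L_0=y_j\Pi_j+x_jN_0$ (after rescaling $L_0$), so that the smallest power of $x_j$ in $L_0^aL_1^bL_2^c$ is $x_j^{b+c}$, with ``leading coefficient'' $y_j^a\Pi_j^aM_1^bM_2^c$. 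Setting $t=\min\{b+c:\alpha_{abc}\ne0\}$, the coefficient of $x_j^t$ in $F$ equals $y_j^{d-t}\Pi_j^{d-t}\sum_{b+c=t}\alpha_{d-t,b,c}M_1^bM_2^c$. Now $M_1,M_2$ are linearly independent $\bbC$-linear combinations of the algebraically independent monomials $\{y_i\,x_1\cdots\widehat{x_i}\cdots\widehat{x_j}\cdots x_n\}_{i\ne j}$, hence themselves algebraically independent over $\bbC$; so the monomials $M_1^bM_2^c$ with $b+c=t$ are linearly independent, and vanishing of the $x_j^t$ coefficient of $F$ forces every $\alpha_{d-t,b,c}$ to be zero, contradicting the definition of $t$ unless $t\ge m_j$. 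This algebraic-independence step is the crux of the argument.

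For the moreover clause, the generators $w_1,w_2,w_3$ of $\bbC[\bbP^2]\subset\bbC[\bfx,\bfy]$ and $x_j$ are all $K$-invariant by \eqref{unipotent} --- this is exactly how $\bbC[\bbP^2]$ was embedded in $\bbC[\bfx,\bfy]^K$. Therefore any $F\in\bbC[\bbP^2]$ is $K$-invariant, and since $x_j^{m_j}$ is a $K$-invariant nonzero divisor of the domain $\bbC[\bfx,\bfy]$, the quotient $F/x_j^{m_j}\in\bbC[\bfx,\bfy]$ is $K$-invariant as well.
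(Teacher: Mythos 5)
Your proof is correct and follows essentially the same route as the paper: choose a basis of $\bbC[\bbP^2]_1$ adapted to $p_j$ (the paper does this via a change of coordinates so that $p_1=(1,0,0)$, you do it coordinate-freely via $L_0,L_1,L_2$), expand $F$ in that basis, and read off divisibility by $x_j$ from the lowest-order term. The one real difference is that you make explicit the algebraic-independence argument justifying that the coefficient of $x_j^t$ is nonzero, a step the paper's proof asserts without comment; otherwise the arguments are the same.
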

\begin{proof}
For simplicity of notation we assume that $j=1$ and set $m_1=m$.
Start with $m=1$ and consider a homogeneous polynomial of degree $d$
\[F(w_1,w_2,w_3)=\sum_{a+b+c=d}\alpha_{abc}w_1^a w_2^b w_3^c.\]
Expanding all powers of the $w_i$s' using \eqref{defw},
we see that the only terms that are not divisible by $x_1$ add up to
\[ \sum_{a+b+c=d}\alpha_{abc}\,p_{1,1}^ap_{2,1}^bp_{3,1}^c
(y_1 x_2\cdots x_n)^d,\]
and obviously this vanishes if and only if $F(p_1)=\sum\alpha_{abc}\,p_{1,1}^ap_{2,1}^bp_{3,1}^c=0$. 
The fact that $F/x_1\in\bbC[\bfx,\bfy]^K$ is now immediate
by \eqref{Ginv}.

The argument for $m>1$ is analogous, but in order to make
the computation more transparent we will do a further
reduction, and assume that the first point is the coordinate point
$p_1=(1,0,0)$ in $\bbP^2$.
By remark \ref{changevar} this is not restrictive.

Now $w_2$ and $w_3$ are multiples of $x_1$ by \eqref{defw}
(since $p_{2,1}=p_{3,1}=0$), and they span $V_1$.
Therefore $[(V_1)^m]_d\subset [(x_1)^m \cap \bbC[\bbP^2]]_d$.

Conversely, assume that $F$ has multiplicity $e<m$ at $p_1$.
In terms of the expansion
$F(w_1,w_2,w_3)=\sum\alpha_{abc}w_1^a w_2^b w_3^c$,
this means there are nonvanishing terms
$\alpha_{d-e,b,c}w_1^a w_2^b w_3^c$ with $b+c=e$.
As each of these $w_2^b w_3^c$ is a multiple of $x_1^e$,
and $w_1=y_1x_2\cdots x_n$ modulo $x_1$,
the following equality holds modulo $x_1^{e+1}$:
\[
F(w_1,w_2,w_3)=\sum_{a+b+c=d}\alpha_{abc}w_1^a w_2^b w_3^c=
\sum_{b+c=e}\alpha_{d-e,b,c} (y_1 x_2 \cdots x_n)^{d-e} w_2^b w_3^c\ne 0,
\]
i.e., $F(w_1,w_2,w_3)$ is not equal to zero
modulo $x_1^{e+1}$, so it is not divisible by $x_1^m$,
and we have proved the inclusion $[(V_1)^m]_d\supset [(x_1)^m \cap \bbC[\bbP^2]]_d$.
 \end{proof}

\begin{lemma}\label{fixedring}
Let as before $Z_{\bfm}=m_1p_1+\cdots+m_np_n\subset\bbP^2$; then
\begin{equation*}
    \mathbb{C}[\bfx,\bfy]^K \cong \underset{\bfm\in \bbZ^n}{\mathsmaller\bigoplus}
    I(Z_\bfm).
\end{equation*}
\end{lemma}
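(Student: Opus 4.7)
My plan is to use equation \eqref{Ginv} to rewrite $\bbC[\bfx,\bfy]^K$ as $\bbC[\bbP^2][x_1^{\pm 1},\ldots,x_n^{\pm 1}]\cap\bbC[\bfx,\bfy]$, and then decompose this intersection according to a natural $\bbZ^n$-grading on the Laurent-extended ring indexed by the exponents of $x_1,\ldots,x_n$. The preceding lemma will then identify each graded piece with the appropriate fat-point ideal.

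The first observation is that $w_1,w_2,w_3$ are linearly independent $\bbC$-combinations of $y_1/x_1,\ldots,y_n/x_n$ (since the first three rows of $P$ have rank $3$), and the $2n$ elements $x_1,\ldots,x_n,y_1/x_1,\ldots,y_n/x_n$ are algebraically independent in the localization $\bbC[\bfx,\bfy][x_1^{-1},\ldots,x_n^{-1}]$. Hence so are $w_1,w_2,w_3,x_1,\ldots,x_n$, and consequently
\[
\bbC[\bbP^2][x_1^{\pm 1},\ldots,x_n^{\pm 1}]\;\cong\;\bbC[\bbP^2]\otimes_\bbC\bbC[x_1^{\pm 1},\ldots,x_n^{\pm 1}]
\]
carries a well-defined $\bbZ^n$-grading with $\bbC[\bbP^2]$ in degree $0$ and $x_j$ in degree $e_j$; the component of multidegree $-\bfm$ is then simply $\bbC[\bbP^2]\cdot x^{-\bfm}$.

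The heart of the argument is to determine, for each $\bfm\in\bbZ^n$, which elements $G\cdot x^{-\bfm}$ of this component lie in $\bbC[\bfx,\bfy]$. Such a product is a polynomial precisely when $G\in\bbC[\bbP^2]$ is divisible in $\bbC[\bfx,\bfy]$ by $x_j^{m_j}$ for every $j$ with $m_j>0$. Applying the preceding lemma to each homogeneous component of $G$, this divisibility condition is equivalent to $G$ vanishing at $p_j$ to order at least $m_j$ for each such $j$, i.e., to $G\in I(Z_{\bfm^+})$, where $\bfm^+=(\max(0,m_1),\ldots,\max(0,m_n))$.

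Summing over all multidegrees yields
\[
\bbC[\bfx,\bfy]^K\;=\;\bigoplus_{\bfm\in\bbZ^n}I(Z_{\bfm^+})\cdot x^{-\bfm},
\]
and the map $F\cdot x^{-\bfm}\mapsto F$ on each summand provides the desired isomorphism onto $\bigoplus_{\bfm\in\bbZ^n}I(Z_{\bfm^+})$. Adopting the standard extended-Rees convention $I(p_j)^{m_j}:=\bbC[\bbP^2]$ for $m_j\leq 0$ identifies $I(Z_{\bfm^+})$ with $I(Z_\bfm)$, giving the stated formula. The main technical point is the algebraic-independence step underpinning the tensor decomposition (and the careful reading of $I(Z_\bfm)$ for $\bfm$ with negative entries); once that is in place, everything else is essentially bookkeeping on top of the preceding lemma.
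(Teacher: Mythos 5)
Your proof is correct and takes essentially the same route as the paper's: rewrite $\mathbb{C}[\bfx,\bfy]^K$ via \eqref{Ginv}, decompose the Laurent ring $\bbC[\bbP^2][x_1^{\pm1},\ldots,x_n^{\pm1}]$ by its natural $\bbZ^n$-grading in the $x_j$'s, and apply the preceding lemma to identify each graded piece with the appropriate fat-point ideal; the paper just carries out the same decomposition more tersely. One slip worth fixing: it is $\varpi_1,\varpi_2,\varpi_3$ (not $w_1,w_2,w_3$) that are $\bbC$-linear combinations of $y_1/x_1,\ldots,y_n/x_n$; the $w_i=\varpi_i\cdot x_1\cdots x_n$ are instead $\bbC$-linear combinations of the monomials $y_j\prod_{k\neq j}x_k$. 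Your desired conclusion, that $w_1,w_2,w_3,x_1,\ldots,x_n$ are algebraically independent, still holds because $\bbC(w_1,w_2,w_3,x_1,\ldots,x_n)=\bbC(\varpi_1,\varpi_2,\varpi_3,x_1,\ldots,x_n)$, but the argument should be routed through the $\varpi_i$ rather than asserting the premise as stated.
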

\begin{proof}
By \eqref{Ginv}, an element $f\in \bbC[\bfx,\bfy]$
is invariant by $K$ if and only if there exist nonnegative
integers $m_1, \dots, m_n$ such that
$fx_1^{m_1}\cdots x_n^{m_n}\in \bbC[\bbP^2]$; this
gives
\begin{equation*}
    \mathbb{C}[\bfx,\bfy]^K = \mathbb{C}[\bbP^2][x_1,\ldots ,x_n]+\sum_{\bfm> \underline{0}}((x_1)^{m_1}\cap\ldots \cap(x_n)^{m_n}\cap\bbC[\bbP^2])x_1^{-m_1 }\cdots x_n^{-m_n }.
\end{equation*}
The previous lemma then says that the last expression equals
\begin{equation*}
    \mathbb{C}[\bbP^2][x_1,\ldots ,x_n]+\sum_{\bfm> \underline{0}} I(Z_\bfm)x_1^{-m_1 }\cdots x_n^{-m_n },
\end{equation*}
that is clearly isomorphic to
$\oplus_{\bfm\in \bbZ^n} I(Z_\bfm)$, as claimed.
\end{proof}
The multigraded algebra of Lemma \ref{fixedring} is called the
\emph{Rees algebra of the multigraded filtration}
$\{I(Z_\bfm)\}_{\bfm\in\bbZ^n}$.
It also inherits the natural grading of $\bbC[\bbP^2]$,
so that it is in fact a $\bbZ^{n+1}$--graded algebra:
\[
\underset{\bfm\in \bbZ^n}{\mathsmaller{\bigoplus}}
    I(Z_\bfm)
    =\underset{{\bfm\in\bbZ^n, d\geq0}}{\mathsmaller{\bigoplus}}
    [I(Z_\bfm)]_d\, .
    \]
For details on Rees algebras for general filtrations, for modules,
and their connection with blowups, see \cite{GN94}, \cite{EHU03}.

\subsubsection*{The invariant ring of the Nagata action as a Rees algebra}
Let us now go back and
consider a fixed vector of multiplicities
$\bfv$, and the groups
$H_{\bfv}=\{(c_1,\ldots,c_n) \, | \, c_1^{v_1} \cdots c_n^{v_n}=1 \}$ and $G=H_\bfv \ltimes K$
acting by \eqref{biggroup}.
The algebra of invariants of $G$ can be
described as
\begin{equation}\label{iteratedaction}
\bbC[\bfx,\bfy]^G=\left(\bbC[\bfx,\bfy]^{K}\right)^G
=\left(\bbC[\bfx,\bfy]^K\right)^{H_\bfv}.
\end{equation}
The three elements $w_1, w_2, w_3$ are clearly invariant not only under the
action of $K$, but under the whole group $G$. Therefore $H_\bfv$ acts on
$\bbC[\bbP^2][x_1^{\pm1},\dots,x_n^{\pm1}]$, and in fact the action
can be described as follows. For every $\bfc\in H_\bfv$,
\begin{align*}
 \bfc (w_j)&=w_j, \;
{\text {for}}\; 1\le j\le 3, \\
 \bfc (x_i)&=c_ix_i, \;
{\text {for}}\; 1\le i\le n.
\end{align*}
Therefore, by the definition of $H_\bfv$,
\begin{equation}\label{diaginv}
\bbC[\bbP^2][x_1^{\pm1},\dots,x_n^{\pm1}]^{H_\bfv}=
\bbC[\bbP^2][t^{\pm1}], \; \text{ where }t=x_1^{v_1}\cdots x_n^{v_n}.
\end{equation}
For every  nonnegative integer $m$, let
\[
I(mZ_\bfv)=I(Z_\bfv)^{(m)}=I(Z_{m\bfv})=\bigcap_{j=1}^n I(p_j)^{mv_j}
\]
be the so-called $m$-th \emph{symbolic power} of $I(Z_\bfv)$.
Putting together \eqref{Ginv}, \eqref{iteratedaction}, \eqref{diaginv},
and Lemma \ref{fixedring}, the following description of the invariant ring
holds.

\begin{proposition}\label{fixedringbig}
Let $Z_\bfv=v_1p_1+\cdots+v_np_n\subset\bbP^2$; then
\begin{equation}\label{reebig}
    \mathbb{C}[\bfx,\bfy]^G \cong
    \underset{m\in \bbZ}{\mathsmaller{\bigoplus}}
    I(mZ_\bfv)
    =\underset{m\in\bbZ, d\geq 1}{\mathsmaller{\bigoplus}}
    [I(mZ_{\bfv})]_d.
\end{equation}
\end{proposition}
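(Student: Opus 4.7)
The plan is to chain the iterated-invariants identity \eqref{iteratedaction} with the multigraded description of $\bbC[\bfx,\bfy]^K$ from Lemma \ref{fixedring}, and then to isolate the $H_\bfv$-fixed part by a character-theoretic computation. Concretely, starting from $\bbC[\bfx,\bfy]^G = (\bbC[\bfx,\bfy]^K)^{H_\bfv}$, Lemma \ref{fixedring} provides a $\bbZ^n$-grading
\[
\bbC[\bfx,\bfy]^K \;=\; \underset{\bfm\in\bbZ^n}{\mathsmaller\bigoplus} I(Z_\bfm),
\]
where the summand indexed by $\bfm$ sits inside $\bbC[\bfx,\bfy][x_1^{-1},\ldots,x_n^{-1}]$ as $I(Z_\bfm)\cdot x_1^{-m_1}\cdots x_n^{-m_n}$, with $I(Z_\bfm)\subset\bbC[\bbP^2]=\bbC[w_1,w_2,w_3]$.

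Next I would show that this $\bbZ^n$-decomposition is exactly the weight space decomposition for the induced action of $H_\bfv$. The three generators $w_1,w_2,w_3$ are fixed by $G$ by \eqref{biggroup}, so $H_\bfv$ acts trivially on $\bbC[\bbP^2]$; on the other hand $\bfc\in H_\bfv$ sends $x_1^{-m_1}\cdots x_n^{-m_n}$ to $c_1^{-m_1}\cdots c_n^{-m_n}\cdot x_1^{-m_1}\cdots x_n^{-m_n}$. Hence $H_\bfv$ acts on the summand with index $\bfm$ through the character $\chi_\bfm\colon\bfc\mapsto c_1^{-m_1}\cdots c_n^{-m_n}$, and the summand contributes to the $H_\bfv$-invariants precisely when $\chi_\bfm$ is trivial on $H_\bfv$.

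The key remaining step is identifying the kernel of the restriction map from characters of $(\bbC^*)^n$ to characters of $H_\bfv$. Dualizing the short exact sequence
\[
1 \longrightarrow H_\bfv \longrightarrow (\bbC^*)^n \xrightarrow{\;\bfc\,\mapsto\, c_1^{v_1}\cdots c_n^{v_n}\;} \bbC^* \longrightarrow 1,
\]
gives $\widehat{H_\bfv}\cong \bbZ^n/\bbZ\bfv$, so $\chi_\bfm$ is trivial on $H_\bfv$ if and only if $\bfm\in\bbZ\bfv$, i.e., $\bfm=m\bfv$ for some $m\in\bbZ$. The surviving summands are therefore the symbolic powers $I(Z_{m\bfv})=I(mZ_\bfv)$, proving the first isomorphism of \eqref{reebig}.

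Finally, the second equality just records the internal $\bbZ$-grading inherited from $\bbC[\bbP^2]$: each $I(mZ_\bfv)$ is a homogeneous ideal, whose decomposition into graded pieces $[I(mZ_\bfv)]_d$ upgrades the $\bbZ$-graded Rees algebra to the $\bbZ^{n+1}$-graded structure mentioned after Lemma \ref{fixedring}. The main (minor) obstacle is the character-theoretic identification in the third step; everything else is bookkeeping once the action of $H_\bfv$ is pulled back to the multigraded decomposition of Lemma \ref{fixedring}.
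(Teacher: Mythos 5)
Your argument is correct and amounts to the same computation as the paper's: both reduce via \eqref{iteratedaction} to the $H_\bfv$-invariants, and both hinge on the observation that the characters of $(\bbC^*)^n$ restricting trivially to $H_\bfv$ are exactly the integer multiples of $\bfv$. The paper packages that fact as equation \eqref{diaginv} (namely $\bbC[\bbP^2][x_1^{\pm1},\dots,x_n^{\pm1}]^{H_\bfv}=\bbC[\bbP^2][t^{\pm1}]$ with $t=x_1^{v_1}\cdots x_n^{v_n}$) and then intersects with $\bbC[\bfx,\bfy]$, whereas you first pass to the multigraded description of $\bbC[\bfx,\bfy]^K$ and then pick out the $H_\bfv$-trivial weight spaces; the order of the two steps is swapped but the content is identical.
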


Again, we have identified the invariant ring as a Rees algebra.

\subsection{Semigroups, cones and finite generation}

The next step is to find sufficient conditions under which   the
 multigraded algebras  $\oplus_{\bfm,d}[I(Z_\bfm)]_d$ of
 Lemma \ref{fixedring}
 and  $\oplus_{m,d}[I(mZ_\bfv)]_d$ of Proposition \ref{fixedringbig} are not finitely
 generated.

Given a $k$-algebra $A=\oplus_{\lambda\in\Lambda}A_{\lambda}$ graded by a free abelian group $\Lambda$, the subset $\{\lambda\in\Lambda |A_{\lambda}\neq 0\}$ of $\Lambda$ is a semigroup   called the \emph{support of} $A$ and denoted by $Supp(A)$. Clearly, if $A$ is finitely generated as a ring over $k$ then $Supp(A )$  is finitely generated as a semigroup.
 In our case $\mathcal{S}_K=Supp(\oplus_{\bfm,d}[I(Z_\bfm)]_d)$
 (respectively $\mathcal{S}_G=Supp(\oplus_{m,d}[I(mZ_\bfv)]_d)$) is
 a semigroup in $\mathbb{Z}^{n+1}$ (respectively in $\bbZ^2$), and it
 will be enough to give conditions  in order that
$\mathcal{S}_K$ or $\mathcal{S}_G$ is not finitely generated.
In fact, we shall give sufficient conditions for the \emph{convex cone}
spanned by the semigroup $Supp A$ in the real vector space
 $ \Lambda \otimes \bbR\cong\bbR^{N}$ to be non finitely generated,
which is a stronger condition.

A \emph{convex cone} in a real vector space $V$ is a subset $C\subset V$
closed under nonnegative linear combinations:
\[ \forall u, v\in C, \forall a,b \in\bbR, a,b\ge 0 \Longrightarrow
au+bv \in C. \]
Given an arbitrary subset $S\subset V$, the \emph{cone spanned} by $S$
(or \emph{conic hull}) is the set of all nonnegative linear combinations
of vectors in $S$:
\[
\co(S)=\left\{\left.\sum_{i=1}^k a_i v_i \,\right|\,a_i\ge 0, v_i\in S \right\}.
\]
The conic hull $\co(v)$ of a nonzero vector is called the \emph{ray}
spanned by $v$. Given a cone $C$, a ray $R\subset C$ is said to be
\emph{extremal} if for every $u, v\in C$, $u+v\in R$ implies $u, v \in R$.
A cone is \emph{polyhedral} if it can be spanned by a finite set.
A polyhedral cone is always closed.

Given two cones $C_1, C_2$, the cone spanned by their union
is denoted by $C_1+C_2=\co(C_1\cup C_2)$, as it coincides with their
Minkowski sum as subsets of $V$.

Consider now the real convex cone
spanned by $Supp(A)$
 \[
 \co(Supp(A))=\left\{ \sum_{i=1}^k a_i \lambda_i \, | \,
 a_i \in \bbR_{\ge0}, \lambda_i \in Supp(A) \right\} \subset \Lambda \otimes \bbR\cong\bbR^{N}.
  \]
 Whenever the semigroup $Supp(A)$ is finitely generated,
 $\co(Supp(A))$ is a closed polyhedral cone, whose extremal rays are
 spanned by a subset of generators of $Supp(A)$.
Nagata's method to prove that
 $\mathcal S_G$ is not finitely generated is to show that $\co(\mathcal{S}_G)$
 is not closed. Observe that $\co(\mathcal{S}_G)$ can be understood as the intersection of
 $\co(\mathcal{S}_K)\subset \bbR^{n+1}$ with the plane
 \[ \Pi=\langle(v_1,\dots,v_n,0),(0,\dots,0,1)\rangle \subset \bbR^{n+1}.\]
 So, if $\co(\mathcal{S}_G)$ is not closed, then $\co(\mathcal{S}_K)$
 is not closed either, and this is enough to show that neither
$\bbC[\bfx,\bfy]^G$ nor $\bbC[\bfx,\bfy]^K$ are finitely generated.

We want to show that the convex cone $\co(\mathcal{S}_G)$
spanned by the support semigroup
\[
\mathcal{S}_G=\{(d,m) \, | \, [I(mZ_\bfv)]_d \ne 0\} \subset \bbZ^2.
\]
is not closed for suitable $\bfv$.
Set $\delta=\sqrt {\sum_{j=1}^n v_j^2}$.
The  symbolic powers $I(mZ_\bfv)$ form a \emph{multiplicative filtration},
i.e.,
\begin{equation}\label{filtr}
I(mZ_\bfv)I(m'Z_\bfv)  \subseteq I((m+m') Z_\bfv)
\end{equation}
  in particular
$(I(mZ_\bfv))^{\ell} \subseteq I(\ell m Z_\bfv)$.

\begin{example}
If $p_1,\ldots,p_{10}\in\bbP^2$ are the 10 nodes of
an irreducible nodal rational sextic, then for
 $Z=p_1+\cdots+ p_{10}$, one has $I(Z)_3= 0$, hence $I(Z)_3 I(Z)_3= 0$; but $I(2Z)_6\neq 0$, thus $(I(Z))^{2} \subsetneq I(2 Z)$.
\end{example}
For any homogeneous ideal $I$  in $\bbC[\bbP^2]$,
let $\alpha(I)=\min\{t\,|\,I_t\ne 0\}$.
\begin{lemma}\label{nagata_lemma1}
 Suppose that for every $m\ge 1$  it is $\alpha(I(mZ_\bfv))>m\delta$. Then
 for every  $m\ge 1$ there is $\ell >1$ such that
 $(I(mZ_\bfv))^\ell \subsetneq I({\ell m}Z_\bfv)$.
\end{lemma}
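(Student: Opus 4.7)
\noindent\textit{Proof plan.} I would argue by contradiction: fix $m\ge1$ and suppose that $(I(mZ_\bfv))^\ell = I(\ell m Z_\bfv)$ for every $\ell>1$; the inclusion $\subseteq$ is automatic from \eqref{filtr}, so this is exactly the negation of what must be ruled out for this particular $m$.

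The first ingredient is the elementary identity $\alpha(I^\ell)=\ell\,\alpha(I)$ for any nonzero homogeneous ideal $I\subset\bbC[\bbP^2]$: any product of $\ell$ homogeneous elements of $I$ has degree $\ge\ell\,\alpha(I)$, and the $\ell$-th power of a minimum-degree element of $I$ realizes that degree. Applied to $I=I(mZ_\bfv)$, the contradiction hypothesis yields $\alpha(I(\ell m Z_\bfv))=\ell\,\alpha(I(mZ_\bfv))$, so the normalized sequence
\[
\frac{\alpha(I(\ell m Z_\bfv))}{\ell m}=\frac{\alpha(I(mZ_\bfv))}{m}>\delta
\]
is constant in $\ell$ and, by hypothesis, strictly exceeds $\delta$.

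The main step is to produce a matching upper bound $\alpha(I(kZ_\bfv))\le k\delta+C$ with $C$ depending on $\bfv$ but not on $k$. This is a na\"ive parameter count: the linear system of degree-$d$ forms vanishing to order at least $kv_j$ at each $p_j$ is cut out in $\bbC[\bbP^2]_d$ by at most $\sum_j\binom{kv_j+1}{2}=\tfrac12(k^2\delta^2+k\sum_j v_j)$ linear conditions, hence is nonzero as soon as $\binom{d+2}{2}>\tfrac12(k^2\delta^2+k\sum_j v_j)$. Expanding with $d=\lceil k\delta+c\rceil$ turns this, for $k$ large, into a linear inequality of the shape $(2c+1)\delta>\sum_j v_j$, satisfied by any constant $c>\bigl(\sum_j v_j\bigr)/(2\delta)-\tfrac12$. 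So $\alpha(I(kZ_\bfv))\le k\delta+C$ for all sufficiently large $k$, with $C=\lceil(\sum_j v_j)/(2\delta)\rceil+1$ (or any other value absorbing finitely many small exceptions).

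Putting the two bounds together gives $\ell\,\alpha(I(mZ_\bfv))=\alpha(I(\ell m Z_\bfv))\le \ell m\delta+C$, so $\alpha(I(mZ_\bfv))/m\le\delta+C/(\ell m)$; letting $\ell\to\infty$ forces $\alpha(I(mZ_\bfv))\le m\delta$, contradicting the hypothesis for this $m$. The only real work is the parameter count and the attending calibration of constants; everything else is formal, combining the multiplicativity of $\alpha$ on ideal powers with the strict inequality $\alpha(I(mZ_\bfv))>m\delta$.
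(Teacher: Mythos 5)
Your proof is correct and uses the same two ingredients as the paper's: the multiplicativity $\alpha(I^\ell)=\ell\alpha(I)$ for powers of a homogeneous ideal, and a parameter count giving $\alpha(I(kZ_\bfv))\le k\delta+O(1)$. The difference is in the route. The paper applies Fekete's Lemma to the subadditive sequence $\alpha(I(mZ_\bfv))$ to get existence of the limit $\widehat\alpha=\lim_m\alpha(I(mZ_\bfv))/m$ (the Waldschmidt constant), argues from the dimension count and the hypothesis that $\widehat\alpha=\delta$, and then compares with the constant sequence $\alpha((I(mZ_\bfv))^\ell)/(\ell m)=\alpha(I(mZ_\bfv))/m>\delta$ to find some $\ell$ where $\alpha((I(mZ_\bfv))^\ell)>\alpha(I(\ell mZ_\bfv))$. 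You instead go by contradiction: from the assumed equality of ideals you read off that $\alpha(I(\ell mZ_\bfv))/(\ell m)$ is constant in $\ell$, and squeeze it against the explicit upper bound $\alpha(I(kZ_\bfv))\le k\delta+C$ to force $\alpha(I(mZ_\bfv))\le m\delta$. Your version is slightly more elementary---it never needs the limit to exist and bypasses Fekete's Lemma---at the modest cost of carrying an explicit additive constant through the count. One arithmetic slip worth flagging: expanding $\binom{d+2}{2}$ with $d=\lceil k\delta+c\rceil$ actually yields the linear-in-$k$ condition $(2c+3)\delta>\sum_j v_j$ (the $3d$ term contributes $3k\delta$), not $(2c+1)\delta>\sum_j v_j$; since the threshold you impose on $c$ is the larger of the two, your stated $C$ is still valid, but the calibration is off by one.
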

\begin{proof}
By \eqref{filtr}, $\alpha(I(mZ_\bfv))$ is a subadditive sequence,
hence, by the Fekete Lemma, the limit $\underset{m\to \infty}\lim \frac
{\alpha(I(mZ_\bfv))}m$ exists, and it equals
\[\hat{\alpha}(I(Z_\bfv))=\inf
\left\{\left.\frac {\alpha(I(mZ_\bfv))}m \right| m>0 \right\} ,\]
which is called the \emph{Waldschmidt constant} of $I(Z_\bfv)$.
Since
\[ \dim [I(Z_\bfv)]_d \ge\frac {d^2-m^2 \delta^2} 2 +\cdots, \]
where  the dots denote lower degree terms (see Harbourne's notes \cite{Har16}), we have
$\underset{m\to \infty}\lim \frac {\alpha(I(mZ))}m \le \delta$.
On one hand, by hypothesis
$\frac {\alpha(I(mZ))}m > \delta$ for all positive integers $m$. Hence,
\[
\lim_{\ell\to \infty} \frac {\alpha(I(\ell mZ))}{\ell m}
=
\lim_{m\to \infty} \frac {\alpha(I(mZ))}m = \delta.
\]
On the other hand, $\alpha\left((I(mZ))^\ell\right)=\ell\alpha(I(mZ))$
for every $\ell$,
\[\frac {\alpha\left(\left(I(mZ)\right)^\ell\right)}{\ell m}=
\frac {\alpha(I(mZ))}{m} >\delta\]
from which we conclude that for some large $\ell$ (depending on $m$)
$\alpha((I(mZ))^\ell)>\alpha(I(\ell mZ))$ and the claim follows.
\end{proof}

\begin{exercise}\label{waldschmidt}
 Let $Z_\bfv=v_1p_1+\dots+v_np_n$ be a nonzero fat point subscheme of $\bbP^2$.
Show that $1\le\widehat\alpha(I(Z_\bfv))\le \delta$.
\\
 \emph{Hint}:
look at $[I(kmZ)]_{kd}$
where $d/m$ is rational and close to but bigger than
$\delta$ and $k \gg 0$.
(See also  Exercise 1.3.6 in Harbourne's notes \cite{Har16}).
\end{exercise}

 \subsection{Ciliberto-Miranda's proof for Nagata's theorem}
 \subsubsection*{The Severi variety and degenerations}
By assigning the multiplicities $\bfv$ to any choice of $n$ points of
$\bbP^2$, one gets a scheme
$Z_\bfv=v_1p_1+\dots+v_np_n$ as above. The ideal
$I(Z)$ of course depends on the choice of the points.
Nagata's theorem deals with $\bfv=(1,\dots,1)$ and
a square number of \emph{very general} points, i.e.,
outside of a countable union of proper closed subsets of
$(\bbP^2)^n$.

We will follow the usual convention that, when a claim is made for
\emph{general} points, it is meant that that claim is
satisfied for every choice of the points outside
a proper closed subset of $(\bbP^2)^n$. Similarly when
dealing with a collection of objects (e.g., valuations)
parameterized by some variety $X$, claiming a fact for
general (resp. very general) objects will mean that all
objects parameterized by a Zariski open subset of $X$
(resp. a countable intersection of Zariski opens)
satisfy the claim.

\begin{theorem}[Nagata \cite{Nag58}]\label{Nagata-generic}
Let $\delta\ge4$ be an integer.
  If $p_1, \dots, p_{\delta^2}$ are very general points in $\bbP^2$,
and $Z=p_1+\dots+p_{\delta^2}$, then $\alpha(I(mZ))>\delta m$ for all $m\ge 1$.
\end{theorem}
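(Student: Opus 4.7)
I first observe that it suffices to prove the single-degree statement $[I(mZ)]_{\delta m}=0$ for very general points. Indeed, if $F\in[I(mZ)]_d$ were nonzero with $d<\delta m$, then multiplying $F$ by a product of $\delta m-d$ general linear forms would produce a nonzero element of $[I(mZ)]_{\delta m}$; so non-existence at $d=\delta m$ forces non-existence for every $d\le\delta m$. Since $\dim[I(mZ)]_{\delta m}$ is upper semi-continuous in the positions of the $p_j$, and the statement over a very general configuration involves only countably many values of $m$, it suffices to exhibit, for each $m$, a single specialization of the $\delta^2$ points at which this dimension vanishes.

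The specialization is constructed through a Ciliberto--Miranda style degeneration of the plane. I consider a flat family $\pi\colon\mathcal{X}\to T$ whose generic fibre is $\bbP^2$ and whose central fibre is $\mathcal{X}_0=V\cup_L W$ with $V\cong\bbF_1$, $W\cong\bbP^2$, glued along a rational curve $L$ (the $(-1)$-section on $V$, a line on $W$). The $\delta^2$ marked points become sections of $\pi$: I let $(\delta-1)^2$ of them limit to very general points of $W\setminus L$ and the remaining $2\delta-1$ to very general points of $V\setminus L$. A flat limit $\cL_0$ of $\cO(\delta m)$ is chosen by prescribing integers $k_V+k_W=\delta m$ which govern vanishing orders along $L$, and the limit sections fit in the Mayer--Vietoris type sequence
\[
0\to H^0(\mathcal{X}_0,\cL_0\otimes I(mZ_0))\to H^0(V,\cL_V\otimes I(mZ_V))\oplus H^0(W,\cL_W\otimes I(mZ_W))\to H^0(L,\cL_L).
\]

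The proof then proceeds by induction on $\delta$. On $W\cong\bbP^2$, the restricted system has ``effective degree'' at most $\delta m$ with multiplicity $m$ at $(\delta-1)^2$ very general points, and is emptied by the inductive hypothesis applied to $\delta-1$. On $V\cong\bbF_1$, one computes directly by Bezout against the fibres of the ruling and the negative section, using that $2\delta-1$ very general points are enough to exclude exotic sections in the relevant range. The base case $\delta=4$ is handled by the estimate
\[
\binom{4m+2}{2}-16\binom{m+1}{2}=1-2m<0\quad\text{for every }m\ge 1,
\]
combined with a non-speciality argument (itself a smaller instance of the same degeneration) showing that $16$ very general points impose independent conditions of multiplicity $m$ on curves of degree $4m$.

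The hard part is the bookkeeping of the twists along $L$ and the matching conditions. Optimising $(k_V,k_W)$ and carefully tracking how the $m$-fold vanishing conditions at points near $L$ get distributed between $V$ and $W$ is delicate, and it is precisely here that the hypothesis $\delta\ge 4$ is used, both to make the virtual count strictly negative and to give the induction enough slack to propagate the \emph{strict} inequality $\alpha(I(mZ))>\delta m$. A grid configuration of $\delta^2$ points lying on $\delta$ horizontal and $\delta$ vertical lines would yield only $\alpha\ge\delta m$, so the very generality of the points must be exploited at each inductive step to tighten the bound from $\ge$ to $>$.
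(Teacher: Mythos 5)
Your overall strategy matches the paper's: reduce to a vanishing statement $[I(mZ)]_{\delta m}=0$ by semicontinuity, degenerate $\bbP^2$ to a union $V\cup_L W$ of an $\bbF_1$ and a $\bbP^2$ in the style of Ciliberto--Miranda, distribute $(\delta-1)^2$ points to $W$ and $2\delta-1$ to $V$, and run an induction on $\delta$. The induction step and the base case are, however, exactly where your proposal leaves real gaps.

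For the base case $\delta=4$, the virtual count $\binom{4m+2}{2}-16\binom{m+1}{2}=1-2m<0$ only concludes emptiness if the $16$ multiplicity-$m$ points impose independent conditions on degree-$4m$ curves, and that independence \emph{is} the content of the theorem in this range. You assert it follows from ``a smaller instance of the same degeneration,'' but in fact the degeneration alone does \emph{not} empty the system here: on $W$ one gets the unique cubic through $9$ general points taken $m$ times, and on $V$ one gets $m$ curves from the pencil of quartics with a triple point and $7$ simple base points; both nonempty. The paper's base case is precisely the extra argument that these two candidate pieces cannot match along $L$: the pencil of quartics cuts out a non-complete degree-$3$ series on $L$, and one can choose the $9$ points on a cubic whose trace on $L$ avoids that pencil. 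Without this matching analysis your base case is circular.

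The induction step on $V\cong\bbF_1$ is also underspecified. The piece on $V$ is the proper transform of a plane curve of degree $(\delta+1)m$ (using the paper's index, $\delta m$ in yours) with multiplicity $e>\delta m$ at the blown-up point and multiplicity $m$ at $2\delta+1$ further general points; the paper shows this is impossible by a sequence of $\delta$ standard Cremona transformations, reducing to a curve of negative degree. ``Bezout against the fibres of the ruling and the negative section'' does not obviously reproduce this; if you intend a direct numerical argument you need to exhibit it, since this is where most of the work lies. Finally, the aside about a grid of $\delta^2$ points giving ``only $\alpha\ge\delta m$'' is backwards: a union of $\delta$ lines of the grid already produces a degree-$\delta$ curve through all the points, so for such configurations $\alpha(I(mZ))\le\delta m$, which is why very generality is essential.
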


Assigning a point $p$ of multiplicity $m$ to a homogeneous polynomial of
fixed degree $d$ corresponds to a set of $\binom{m+1}2$ linear
equations on the coefficients of the polynomial. As the position of the
assigned point varies, the coefficients determining these linear equations vary
polynomially in the coordinates of the point.
Thus, for each $d$ and $\bfv=(v_1,\dots,v_n)$ there are $\sum\binom{v_i+1}2$
equations determining a (possibly empty) ``Severi variety''
\[V_{\bfv,d} \subset (\bbP^2)^n \times \bbP(\bbC[w_1,w_2,w_3]_d)\]
formed by the closure of the set of the tuples $(p_1,\dots,p_n,F)$ such that $F$ has multiplicity
at least $v_i$ at $p_i$, i.e., the fibres of $V_{\bfv,d}$ for the
projection to $(\bbP^2)^n$ are the (projectivized)
degree $d$ pieces of the ideals $I(Z_\bfv)$ as the points
in $Z_\bfv$ vary.

Since the Severi variety is Zariski-closed
and the projection to $(\bbP^2)^n$ is a projective map,
general fibers of the Severi variety $V_{\bfv,d}$
are nonempty exactly when
the image of $V_{\bfv,d}$ is the whole $(\bbP^2)^n$.
Moreover, if we set  $\alpha_{\rm{gen}}(\bfv)$ the value of
$\alpha(I(Z_\bfv))$ for general $p_i$,
then for every $0<d<\alpha_{\rm{gen}}(\bfv)$ the image of $V_{\bfv,d}$
on $(\bbP^2)^n$ is a closed proper subset, and therefore
$\alpha(I(Z_\bfv))=\alpha_{\rm{gen}}(\bfv)$ for all choices of points
$p_i$ off these (finitely many) closed subsets.
This allows for \emph{specialization} and \emph{degeneration}
arguments: if there is some position of the points such that $[I(Z_\bfv)]_d=0$,
then the same holds for general points and so
$\alpha_{\rm{gen}}(\bfv)>d$. Thus Theorem
\ref{Nagata-generic} is equivalent to:

\begin{theorem}\label{Nagata-general}
  Let $\delta\ge4$, $m\ge 1$ and $d\ge 1$ be integers with $d\le \delta m$.
  If $p_1, \dots, p_{\delta^2}$ are  general points in $\bbP^2$,
and $Z=p_1+\dots+p_{\delta^2}$, then $[I(mZ)]_d=0$.
\end{theorem}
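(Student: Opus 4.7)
The plan is to prove Theorem \ref{Nagata-general} by induction on $\delta$, using the degeneration technique of Ciliberto--Miranda. One considers a flat family $\pi\colon \mathcal{X}\to\Delta$ over a small disc whose general fiber is $\bbP^2$ and whose central fiber $\mathcal{X}_0 = P\cup_R F$ is the transverse union of a plane $P\cong\bbP^2$ and a Hirzebruch surface $F\cong\bbF_1$, glued along a smooth rational curve $R$ that is a line in $P$ and a section of self-intersection $+1$ in $F$. Such a family is obtained by blowing up a line in the central fiber of $\bbP^2\times\Delta$. A base case (e.g.\ $\delta=4$) has to be handled separately, by a direct dimension count or by a more elementary degeneration, in order to start the induction.

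For the inductive step, I would specialize the $\delta^2$ points so that $(\delta-1)^2$ of them go to general points of $P$ and the remaining $2\delta-1$ go to general points of $F$; the arithmetic $\delta^2 = (\delta-1)^2 + (2\delta-1)$ matches. Lift the line bundle $\cO_{\bbP^2}(d)$ on the general fiber to a line bundle on $\mathcal{X}$: this lift involves a choice of twisting integer $k$ (the degree of the limit bundle along $R$), yielding a line bundle $\cL_P = \cO_P(d-k)$ on $P$ and a line bundle $\cL_F$ on $F$ whose bidegree depends on $k$. A section of $\cO_{\bbP^2}(d)$ with the prescribed multiplicities degenerates to a pair $(s_P,s_F)$ with the corresponding vanishing at the limit points and subject to the matching condition $s_P|_R = s_F|_R$; by semicontinuity of $h^0$, it suffices to show that no such matched pair exists on $\mathcal{X}_0$ for any admissible choice of $k$.

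I would then choose $k$ so that $d-k\leq (\delta-1)m$, which is possible because $d\leq \delta m$. By the inductive hypothesis applied to $\delta-1$, the linear series on $P$ consisting of degree $d-k$ curves with $(\delta-1)^2$ general points of multiplicity $m$ is empty, forcing $s_P=0$ and hence $s_F|_R=0$. It remains to show that the residual linear series on $\bbF_1$, with bidegree determined by $k$, imposing $2\delta-1$ general points of multiplicity $m$ and containing $R$ with appropriate multiplicity, is also empty. Since $\bbF_1$ is $\bbP^2$ blown up at one point, this reduces to a dimension count showing that the $(2\delta-1)\binom{m+1}{2}$ conditions imposed by the $F$-points together with the vanishing along $R$ strictly exceed the dimension of the ambient linear system, for every admissible value of $k$.

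The main obstacle, I expect, is the careful bookkeeping of the matching conditions along $R$ and the sharp numerical verification that the linear system on $F$ is indeed empty for the full range $d\leq\delta m$ and all compatible twists $k$. The count is tight: one needs a uniform inequality that works whether $d$ is close to $\delta m$ or much smaller, and the boundary case $d=\delta m$ may require extra care. Finally, establishing the base of the induction rigorously (which for general $n=\delta^2$ cannot be reduced to Bezout-like arguments) is itself nontrivial and must be tackled before the inductive machinery can be applied.
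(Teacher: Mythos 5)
Your overall strategy is the paper's: a Ciliberto--Miranda degeneration of $\bbP^2$ to $\bbP^2\cup_R\bbF_1$, specialization of $(\delta-1)^2$ points to the plane component and $2\delta-1$ to the Hirzebruch component, a matching condition along $R$, and an induction on $\delta$. But two steps in your plan conceal genuine gaps, and the second of them is the whole difficulty.

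First, a small bookkeeping error: since $\mathcal{X}_0=P\cup_R F$ is a fiber of a semistable degeneration, the normal bundles satisfy $N_{R/P}\otimes N_{R/F}\cong\cO_R$, so $R^2_P+R^2_F=0$. With $R$ a line in $P$ one gets $R^2_F=-1$: $R$ is the \emph{negative} section of $\bbF_1$, not the section of self-intersection $+1$. (This is consistent with the paper, which blows up a point rather than a line: the roles of exceptional divisor and strict transform switch, but the gluing curve is a line in the plane piece and the negative section in $\bbF_1$ in either construction.)

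The serious gap is at the end. You reduce the inductive step to ``a dimension count showing that the $(2\delta-1)\binom{m+1}{2}$ conditions imposed by the $F$-points together with the vanishing along $R$ strictly exceed the dimension of the ambient linear system.'' An excess of conditions does \emph{not} prove emptiness; it proves emptiness only when the conditions are independent, and whether multiple-point conditions are independent for general points is precisely the content of SHGH-type conjectures, open in general. The paper does not attempt a naive count: it shows (Lemma~\ref{CM-induction}) that if the putative curve $C_\bbF$ has multiplicity $e>\delta m$ at the blown-up point, then after $\delta$ standard Cremona transformations centered at the three largest multiplicities one obtains a plane curve of degree $m-(e-\delta m)\delta<m$ with a point of multiplicity $m$ --- manifestly impossible by degree reasons. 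Some substitute for this Cremona reduction (or for the equivalent Hudson-test computation) is required; a parameter count alone does not close the argument, however tight the numbers look. Relatedly, the degree decomposition of the limit curve on $\mathcal{X}_0$ is determined by the closure of the family (the multiplicity $e$ at the blown-up point), so you must rule out \emph{every} admissible $e$, not select a convenient $k$; the paper splits into $e\le\delta m$ (killed by induction on $P$) and $e>\delta m$ (killed by the Cremona argument on $\bbF$).

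Finally, the base case $\delta=4$ ($16$ points, all $m\ge1$) is not a direct dimension count and does not dodge the induction by being ``more elementary.'' In the paper (Lemma~\ref{CM-base}) the same degeneration produces a $C_\bbP$ that can only be $m$ times the unique cubic through $9$ general points and a $C_\bbF$ that must consist of $m$ quartics in a pencil; the pencil of quartics cuts a non-complete $g^1_3$ on $E$, so for general points the two sides cannot agree along $E$. This transversality argument on the gluing curve is the actual content of the base case and would need to appear in your proof as well.
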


A semicontinuity argument was used by Nagata
to prove his theorem, and this is also the route
we shall follow here, adapting a plane curves degeneration  argument
of Ciliberto and Miranda \cite{CM06}, to prove it.

Consider $\pi: Y \to \mathbb D$ the family
obtained by blowing up  the trivial family $\mathbb D\times {\mathbb{P}}^2\to \mathbb D$ over a disc $\mathbb D$
at a point in the central fiber.
The general fibre $Y_u$ for $u\neq 0$ is a ${\mathbb{P}}^2$,
and the central fibre $Y_0$ is the union of two surfaces
$\bbP \cup \bbF$, where $\bbP \cong {\mathbb{P}}^2$ is
the exceptional divisor and
$\bbF \cong {\mathbb{F}}_ {1}$ is the original central
fibre blown up at a point.
The surfaces $\bbP$ and $\bbF$ meet transversally along a rational curve $E$
that is the negative section on $\bbF$ and a line on $\bbP$.

One can split $n$ as a sum $n=a+b-1$, and choose
$a$ points $q_1, q_2, \dots, q_a \in \bbP\setminus E$,
and $b-1$ points $q_{a+1}, q_{a+2}, \dots, q_{n} \in\bbF\setminus E$.
Consider these $n$ points as limits of $n$
general points in the general fibre $Y_u$, i.e., fix $n$
sections $\sigma_1,\dots,\sigma_n$ of $Y \to \mathbb D$ going through the chosen points.
These sections determine a map
$\mathbb D \setminus \{0\} \rightarrow (\bbP^2)^n$. Consider the scheme
$Z_\bfv=v_1p_1+\dots+v_np_n$,
if $[I(Z_\bfv)]_d$ is nonempty for a general choice of points,
then (pulling back from the Severi variety) there is a family
of curves $C\subset (\mathbb D \setminus \{0\})\times \bbP^2$
of degree $d$ such that the fiber $C_u$ over every
$u\ne0$ has multiplicity at least $v_i$ at the point
$\sigma_i(u)\in \bbP^2_u$. The closure $\bar C \subset Y$
of $C$ in $Y$ has a ``central fiber'' $C_0$ which is the union
of a curve in each component of $Y_0$, 
$C_0=C_{\bbF}+C_{\bbP}$, and has multiplicity 
at least $v_i$ at each $p_i$
(because $p_i$ is a smooth
point of $Y_0$ and of $Y$, so that the section
$\sigma_i$ meets $Y_0$ transversely at $p_i$).
More explicitly,
$C_{\bbF}$ is the proper transform in $\bbF$ of a  curve  of degree $d$, with some multiplicity
$e$ at the blown up point and multiplicities $(v_{a+1},...,v_{n})$
at the $b-1$ points $Z_{\bbF}$ in $\bbF$, and $C_{\bbP}$ is the proper transform  in
$\bbP$ of a curve of degree $e$ and multiplicities $(v_{1},...,v_{a})$
at the other chosen points $Z_{\bbP}$. Moreover, the two curves have the
same intersection with the rational curve $E$, that is
\begin{equation}\label{centralcurve}
C_{\bbF}\cap E=C_{\bbP}\cap E.
\end{equation}

In other words, we have a  family of curves $\bar C$ that fits in the diagram
\begin{equation}\label{degen}
\begin{gathered}
\xymatrix{
\bar C  \ar[d] \ar[r] & Y = \mathrm{Bl}(\mathbb D\times {\mathbb{P}}^2) \ar[d]^\pi
\\
\mathbb D \ar[r]^{=} &  \mathbb D
}
\end{gathered}
 \end{equation}
in which the specialized curve over $0\in \mathbb D$ splits with the splitting of the surface $Y_0=\bbP \cup \bbF$ in the central fiber of the family of surfaces. The scheme of points contained in the general curve also splits with the curve in the central fiber of $\bar C$.

The preceding discussion can be summarized by saying
that the limit of a family of Cartier divisors is a union
of divisors matching their intersections on $E$.
We refer to \cite{CM98}, \cite{CM06} and \cite{CHMR13}
for more on these particular degenerations.

Theorem \ref{Nagata-general} will follow from the following two lemmas, which will be proved in the next subsection.

\begin{lemma}\label{CM-induction}
  Let $\delta\ge4$ and $m\ge 1$ be integers and assume that
  for $p_1, \dots, p_{\delta^2}$ general points in $\bbP^2_\bbC$,
and $Z=p_1+\dots+p_{\delta^2}$, one has $\alpha(I(mZ))>\delta m$.
Then if $p_1, \dots, p_{(\delta+1)^2}$ are general points in $\bbP^2_\bbC$,
and $Z'=p_1+\dots+p_{(\delta+1)^2}$,  one has $\alpha(I(mZ'))>(\delta+1) m$.
\end{lemma}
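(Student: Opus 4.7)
My plan is to argue by contradiction using the Ciliberto--Miranda degeneration described in the paragraph preceding the lemma. Suppose, for some $m\ge 1$, there are general points $p_1,\ldots,p_{(\delta+1)^2}\in\bbP^2$ and a nonzero $F\in[I(mZ')]_d$ with $d\le(\delta+1)m$. By the semicontinuity discussion it suffices to exhibit one specialization of the points to the central fiber $Y_0=\bbP\cup\bbF$ of the degeneration for which no such $F$ can exist.

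Decompose $n=(\delta+1)^2$ as $a+b-1$ with $a=\delta^2$ and $b-1=2\delta+1$, and send $\delta^2$ of the sections through general points $q_1,\ldots,q_{\delta^2}\in\bbP\setminus E$ and the remaining $2\delta+1$ through general points $q_{\delta^2+1},\ldots,q_{(\delta+1)^2}\in\bbF\setminus E$. The limit $\bar C$ of the assumed family then has central fiber decomposing as $C_0=C_\bbP+C_\bbF$, where $C_\bbP\subset\bbP\cong\bbP^2$ has some degree $e$ and multiplicity $\ge m$ at the first $\delta^2$ points, $C_\bbF\subset\bbF\cong\bbF_{1}$ has class $(d-e)E_0+df$ (with $E_0$ the negative section and $f$ a fiber) and multiplicity $\ge m$ at the remaining $2\delta+1$ points, and the two components share the common intersection $C_\bbP\cap E=C_\bbF\cap E$ of degree $e$ on $E\cong\bbP^1$.

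Applying the inductive hypothesis to $C_\bbP$ (whose $\delta^2$ imposed points are general in $\bbP^2$) gives $e\ge\delta m+1$, whence $s:=d-e\le m-1$, and in particular $f\cdot C_\bbF=s<m$. Each of the $2\delta+1$ fibers through the multiplicity-$m$ points of $C_\bbF$ must therefore appear in $C_\bbF$ with multiplicity at least $m-s$; peeling them off leaves a residual divisor of class $sE_0+(d-(2\delta+1)(m-s))f$ carrying multiplicity $s$ at each of those $2\delta+1$ points. Iterating the reduction---alternately stripping off $E_0$ (forced as a component by negativity of its intersection with the residual, once the $f$-coefficient drops below $s-j$) and then one fiber through each of the $2\delta+1$ points---terminates after $s$ rounds, and effectivity of the final class forces $d\ge(2\delta+1)m$, contradicting $d\le(\delta+1)m$.

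The hard part will be justifying rigorously that every step of the peeling is actually forced. The negativity of the intersection with $E_0$ at intermediate stages depends delicately on the ratio of $s$ to $m$, and the uniform peeling above is automatic only in part of the parameter range (essentially when $s<m/2$); for the complementary range $m/2\le s\le m-1$ one must refine the argument, either by a secondary degeneration of $\bbF$ inside the first one, or by running in parallel the symmetric split sending $\delta^2$ points to $\bbF$ and $2\delta+1$ points to $\bbP$ and combining the two resulting lower bounds on the class of the limit curve. Orchestrating all cases into a single uniform contradiction is the delicate core of the Ciliberto--Miranda induction.
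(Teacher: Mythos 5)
Your approach uses the same Ciliberto--Miranda degeneration as the paper --- same matching $(\delta+1)^2=\delta^2+(2\delta+1)$, same limit curve $C_0=C_\bbP+C_\bbF$, and the same use of the inductive hypothesis on $C_\bbP$ to force $e>\delta m$ --- but the two diverge in how they rule out the resulting curve $C_\bbF$ on $\bbF\cong\bbF_1$. The paper treats $C_\bbF$ as the proper transform of a plane curve of degree $(\delta+1)m$ with one point of multiplicity $e$ and $2\delta+1$ points of multiplicity $m$, and applies $\delta$ rounds of standard Cremona transformations (each based at the $e$-point and two fresh $m$-points); the arithmetic of Exercise~2.12 shows each round drops the degree by exactly $m+\Delta$, where $\Delta=e-\delta m\ge1$, while leaving one multiplicity-$m$ point untouched, so the final degree is $m-\delta\Delta<m$, which is incompatible with a multiplicity-$m$ point. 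You instead try to strip fibers of $\bbF_1\to\bbP^1$ through the multiplicity-$m$ points and the negative section $E_0$, iterating.

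The gap you flag in your own last paragraph is real and is the crux, not a detail. After removing $(m-s)$ copies of each fiber (with $s=d-e=m-\Delta$), the residual class is $sE_0+(d-(2\delta+1)(m-s))f$, and $E_0$ meets it in $-s+d-(2\delta+1)(m-s)$, which is negative --- forcing $E_0$ as a component --- precisely when $s<m/2$, i.e. $\Delta>m/2$. For $m/2\le s\le m-1$ (equivalently $1\le\Delta\le m/2$), nothing is forced and the peeling stalls; the residual can perfectly well be effective. Of your two suggested repairs, a secondary degeneration is not specified, and running the symmetric split ($\delta^2$ points on $\bbF$, $2\delta+1$ on $\bbP$) does not obviously help, since the inductive hypothesis as stated gives information about $\delta^2$ general points on a plane, not $2\delta+1$, so you would have nothing to apply to $C_\bbP$. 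What the Cremona route buys is that each quadratic transformation at $\{p_1,q_i,q_j\}$ simultaneously accounts for both fibers $\overline{p_1q_i}$, $\overline{p_1q_j}$ \emph{and} the section-class line $\overline{q_iq_j}$; the section contribution is exactly the component removal your fiber-only peeling misses when $s\ge m/2$, and building it into the change of coordinates makes the degree drop $m+\Delta$ per step uniformly for all $\Delta\ge1$. So: right degeneration, right inductive input, but a genuinely different and incomplete finish. If you want to keep the peeling philosophy rather than switching to Cremona, you would need to peel sections of class $E_0+f$ through pairs $q_i,q_j$ alongside the fibers, which after bookkeeping is precisely the Cremona step.
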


\begin{lemma}\label{CM-base}
    Let $m\ge 1$ be an integer.
  If $p_1, \dots, p_{16}$ are general points in $\bbP^2_\bbC$,
and $Z=p_1+\dots+p_{16}$, then $\alpha(I(mZ))>4 m$.
\end{lemma}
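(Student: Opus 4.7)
The plan is to proceed by induction on $m$. The base case $m=1$ is immediate: since $\dim_{\bbC}\bbC[\bbP^2]_4=\binom{6}{2}=15<16$, no plane quartic can pass through 16 general points, so $[I(Z)]_4=0$.

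For $m\ge 2$, assume $[I(m'Z)]_{4m'}=0$ for all $1\le m'<m$ and 16 general points. Apply the Ciliberto--Miranda degeneration $\pi\colon Y\to\mathbb D$ from the preceding subsection, specialising the 16 points via the split $16=9+7$: place 9 general points on the exceptional component $\bbP\cong\bbP^2$ and 7 general points on $\bbF\cong\bbF_1$, both away from the double curve $E$. By upper semicontinuity of $h^0$ in the flat family, it suffices to prove that the limit linear system on $Y_0=\bbP\cup\bbF$ is empty. A nonzero element would decompose as $C_0=C_{\bbP}+C_{\bbF}$ matching on $E\cong\bbP^1$, where $C_{\bbP}$ is a plane curve of degree $e$ with multiplicity $m$ at each of the 9 points on $\bbP$, and $C_{\bbF}$ is the proper transform in $\bbF$ of a plane curve of degree $4m$ with multiplicity $e$ at the blown-up point together with multiplicity $m$ at each of the 7 points on $\bbF$.

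A direct calculation combining the contributions of both pieces with the $h^0(\cO_E(e))=e+1$ matching conditions on $E$ shows that the expected affine dimension of the matched limit system is
\[
\binom{e+2}{2}+\binom{4m+2}{2}-\binom{e+1}{2}-16\binom{m+1}{2}-(e+1)\;=\;1-2m,
\]
which is at most $-1$ for every $m\ge 1$. Notably, the $(e+1)$-term from the matching exactly cancels the $e$-dependence of the two sides, so this bound is \emph{independent} of the splitting parameter $e$; generically, the limit system is empty for every admissible $e$.

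The main obstacle is to translate this expected emptiness into actual vanishing by controlling superabundance. On $\bbP$, the nine general points lie on a unique plane cubic $\Gamma$, and extracting $m$-fold multiples of $\Gamma$ from $C_{\bbP}$ forces $e\ge 3m$; on $\bbF$, excess contributions come from the negative section and from $(-1)$-curves of the weak del Pezzo surface obtained by blowing up the 7 chosen points together with the blown-up centre. The heart of the proof is to show that any actual excess beyond the expected dimension, on either piece or through failure of the matching to impose $e+1$ independent conditions, descends through the family $\bar C\subset Y$ to a plane curve of strictly smaller degree $4m'$ with multiplicity $m'<m$ at 16 general points of $\bbP^2$, contradicting the inductive hypothesis. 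Carrying out this excess analysis for every $e$ in the range permitted by Bézout and Nagata-type bounds completes the induction and yields $[I(mZ)]_{4m}=0$.
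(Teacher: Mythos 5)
Your setup (the $16=9+7$ split on the Ciliberto--Miranda degeneration) is the right one, and your base case $m=1$ is correct, but the heart of your argument has a genuine gap. The expected-dimension computation yielding $1-2m\le -1$ only shows that the limit system on $Y_0$ is \emph{expected} to be empty; it does not show it \emph{is} empty. You recognize this and propose to control the superabundance by claiming that ``any actual excess $\ldots$ descends through the family $\bar C\subset Y$ to a plane curve of strictly smaller degree $4m'$ with multiplicity $m'<m$,'' but you give no mechanism for this descent, and it is not clear one exists: superabundance of a limit linear system on a reducible central fiber is a local phenomenon that does not in any obvious way produce a lower-degree curve with the same 16-point multiplicity structure on the general fiber. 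This is precisely the step the paper handles very differently, and without it the induction does not close.

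The paper's proof does not use induction on $m$ at all and sidesteps the dimension count entirely. The same Cremona analysis used in Lemma~\ref{CM-induction} pins the intermediate parameter to $e=3m$, so the potential central curve is completely explicit: $C_{\bbP}$ is forced to be $m$ times the unique plane cubic through the 9 general points of $\bbP$, and $C_{\bbF}$ is forced to consist of $m$ members of the pencil $(4;3,1^7)$ of quartics with a triple point at the blown-up center and simple base points at the 7 points on $\bbF$. Both systems are nonempty (so the expected-dimension count alone could never close the argument), and the actual obstruction is the matching on $E$: the quartic pencil cuts a \emph{non-complete} degree-$3$ series on $E\cong\bbP^1$, so one may choose $3$ points of $E$ outside this series, a cubic in $\bbP$ through them, and $9$ general points on that cubic; for that configuration the cubic's trace on $E$ cannot be hit by any member of the pencil, so no matching $C_0=C_{\bbP}+C_{\bbF}$ exists, and by semicontinuity none exists for general points either. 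To repair your proposal you would either have to prove your ``descent to smaller $m$'' claim (which I doubt holds as stated), or replace the excess analysis with this explicit identification of $C_{\bbP}$ and $C_{\bbF}$ and the failure of the $E$-matching.
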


\subsubsection*{Cremona maps}

For the proof of Lemma \ref{CM-induction}, it will be useful
to exploit some particular Cremona transformations.
Computations are very explicit
and will be left as exercises.
The general theory of Cremona maps (birational maps of $\bbP^2$)
including the description of their effect on plane curves,
can be found in \cite{Alb02}.

Recall that, given three points $p,q,r$ in $\bbP^2$, not on a line,
$\dim [I(p+q+r)]_2=3$, and three independent quadratic forms vanishing
at $p, q, r$ define a birational map $\bbP^2 \dashrightarrow \bbP^2$,
called a \emph{standard Cremona map}.
This map is defined everywhere except at $p, q, r$ and contracts the line
$p\wedge q$ to a point $r'$, the line $q\wedge r$ to $p'$ and the line $p\wedge r$ to $q'$.
The standard Cremona map based at $p', q', r'$ is the inverse of the
previous map, i.e., the composition of both maps is the identity
on the complement of the triangle determined by $p, q, r$.
\\
 We say that a collection of $n\ge 3$ points in $\bbP ^2$ is in
\emph{linear general position} if no subset of $3$ points
is contained in a line; in particular, the points are all distinct.
Note that, given $n$ points in linear general position,
we can perform the standard Cremona transformation on any
subset of 3 points among the $n$ points.
This gives a different collection of $n$ points in $\bbP^2$, that need not be in linear general position.
\\
We say that a collection of $n$
points is \emph{in Cremona general position} if they are in linear general position and
this remains true after any finite sequence of standard Cremona transformations
on subsets of 3 points.
\begin{exercise}[Transforming curves by standard Cremona maps]
\label{transform-cremona}
Taking projective coordinates $x,y,z$
with vertices at $p, q, r$, the Cremona map
based at $p,q,r$ is given by $(x:y:z)\mapsto(yz:xz:xy)$.
Check that the points $p', q', r'$ coincide with $p,q,r$
and this map is its own inverse. Therefore, direct image
and proper preimage of curves (i.e., disregarding
components supported on the coordinate triangle)
of curves under this standard Cremona map coincide.
\\
Show that a curve of degree $d$ with multiplicities
$m_p, m_q, m_r$ at the three
given points is mapped by the Cremona map to a curve of degree
$d+c$ with multiplicities $m_{p'}=m_p+c$, $m_{q'}=m_q+c$, $m_{r'}=m_r+c$
at the three distinguished points in the image, where
$c=d-m_p-m_q-m_r$.
 Any singularity off the
triangle with vertices $p,q,r$ is preserved because the Cremona map
acts as an isomorphism there.
\\
\emph{Hint}: Plugging the expression of the Cremona map into
the equation of the curve shows that the preimage curve
has degree $2d$; check that its equation contains the factor
$x$ exactly $m_p$ times, $y$ exactly $m_q$ times and $z$ exactly
$m_r$ times, to obtain the proper preimage.
\end{exercise}

\begin{exercise}[Openness conditions for collections of points in $(\bbP^2)^n$]
Show that, for every positive integer $\delta$, the locus in $(\bbP^2)^n$
of $n$-tuples of points that are in linear general position, and such that
this remains true after a sequence of $k\le \delta$
standard Cremona transformations on subsets of 3 points, is Zariski open.
\\
Show that the locus in $(\bbP^2)^n$
of $n$-tuples of points in Cremona general position is
the intersection of at most countably many
 Zariski-open subsets of $(\bbP^2)^n$.
\end{exercise}

\begin{remark}
By \cite[Theorem 5.7.3]{Alb02} (a result apparently first
stated by H.~P.~Hudson and proved by P.~Du~Val), the locus in $(\bbP^2)^n$
of $n$-tuples of points in Cremona general position is
Zariski-open if and only if $n\le 8$. See also
\cite[Example V.4.2.3 and Exercise V.4.15]{Har77}.
\end{remark}

\begin{exercise}
 Let $\delta$, $m$ and $e$ be positive integers with
 $e\ge\delta m$, and let $d=(\delta+1)m$, $\Delta=e-\delta m$ and $n=2\delta+1$.
 Pick points $p_1,\dots,p_n\in\bbP^2$ in general position.
 Show that a plane curve of
degree $d$, with multiplicity $e$ at $p_1$ and multiplicity $m$
at each of $p_2, \dots, p_r$ can be transformed by
a sequence of standard Cremona maps
into a curve
of degree $m-\Delta \delta$ with a point of multiplicity $m$.
\end{exercise}

\begin{proof}[Proof of Lemma \ref{CM-induction}]
We argue by contradiction. Assume that $\alpha(I(mZ'))\le(\delta+1) m$,
which means that $[I(mZ')]_{(\delta+1)m}\ne 0$, and consider
the degeneration \eqref{degen}, where the $(\delta+1)^2$ general points
in the general fiber will degenerate to $(\delta+1)^2$ points
in the special fiber, $a=\delta^2$ which can be assumed to be
general on the surface $\bbP$,
and $b-1=2\delta+1$ which can be assumed to be
general on the surface $\bbF$. Since
$[I(mZ')]_{(\delta+1)m}\ne 0$ for general $Z'$, we obtain a family
of curves and a central curve, as in \eqref{centralcurve},
consisting of a curve $C_\bbF$ of
degree $t=(\delta+1)m$, with some multiplicity
$e$ at the blown up point and multiplicity $m$
at the $b-1=2\delta+1$ points chosen in $\bbF$, plus a curve
$C_\bbP$ of degree $e$ and multiplicity $m$
at the $a=\delta^2$ general points.
By hypothesis, if $e\le\delta m$ such a curve does not
exist in $\bbP$, so it will be enough to prove that the claimed curve
in $\bbF$ does not exist for $e>\delta m$.

Let $\Delta=e-\delta m$, it is positive by hypothesis.
It was seen in the preceding exercise that after a sequence of $\delta$
Cremona transformations centered at the three biggest
multiplicities, a curve of degree $(\delta+1)m$  with multiplicity
$e$ at a general point and multiplicity $m$
at further $t-1=2\delta+1$ points would give a curve
of degree $m-\Delta \delta$ with a point of multiplicity $m$.
Obviously this is impossible, as $\Delta>0$.
\end{proof}

\begin{proof}[Proof of Lemma \ref{CM-base}]
We try to apply the same degeneration argument to a general
curve of degree $4m$ with 16 assigned points. In this case
 the only output is that a possible central curve
 \eqref{centralcurve} would consist of a curve
$C_\bbF$ of degree $4m$, with multiplicity
$e=3m$ at the blown up point and multiplicity $m$
at the $7$ points chosen in $\bbF$, plus a curve
$C_\bbP$ of degree $3m$ and multiplicity $m$
at the $9$ general points. Such curves do exist. In this
case the key point is that they cannot match on $E$ for
general points.

Indeed, $C_\bbP$ can only be the unique cubic through
the 9 general points taken $m$ times, whereas
$C_\bbF$ consists of $m$ curves in the pencil
of quartics with a triple point and 7 simple points
(this follows from the Cremona transformations
as in the previous lemma). These can only match on $E$
if the curve in $\bbF$ consists of $m$ times one single curve  in the pencil
of quartics, that
matches the cubic of $\bbP$, i.e., it meets $E$ at the
same three points. Now, the pencil of quartics induces
a pencil of degree 3 on $E$, i.e. a non-complete linear
series of degree 3. Choose 3 points on $E$ that do not belong
to this pencil, choose a cubic $C\subset \bbP$ through these 3 points,
and choose the 9 points on $\bbP$ as general points of $C$.
Then the matching is not possible; therefore it is not possible
for general points either.
\end{proof}
\subsection{Generalization to an arbitrary number of points}
The Ciliberto-Miranda method works more generally to yield the following.

\begin{theorem}[Ciliberto-Harbourne-Miranda-Ro\'e \cite{CHMR13}]\label{thm:chmr}
 For every $n\ge 10$ there exist multiplicities $\bfv=(v_1,\dots,v_n)$
 such that, if $p_1, \dots, p_n \in \bbP^2$ are very general points,
 $Z_\bfv=v_1p_1 + \dots + v_n p_n$ and $\delta=\sqrt{\sum v_i^2}$,
 then $\alpha(I(mZ_\bfv))>\delta m$ for all $m>1$. In particular,
 $\widehat\alpha(I(Z_\bfv))=\delta$.
\end{theorem}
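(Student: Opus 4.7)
The plan is to follow the Ciliberto--Miranda degeneration scheme of the preceding subsection, adapted so that, for each $n\ge 10$, a suitable multiplicity vector $\bfv$ and a suitable way of splitting the $n$ points between the two components of the central fibre of \eqref{degen} simultaneously rule out every curve of degree $d\le\delta m$ passing through $mZ_\bfv$. By Exercise~\ref{waldschmidt} we always have $\widehat\alpha(I(Z_\bfv))\le\delta$, and by subadditivity of $\alpha$ (Fekete's lemma) the bound $\alpha(I(mZ_\bfv))>\delta m$ for all $m>1$ forces $\alpha(I(Z_\bfv))\ge \alpha(I(2Z_\bfv))/2>\delta$ as well, hence $\widehat\alpha=\delta$; via the Severi-variety semicontinuity argument that gave Theorem~\ref{Nagata-general}, it suffices to exhibit, for each integer pair $(m,d)$ with $d\le\delta m$, a single specialization of the $n$ points for which $[I(mZ_\bfv)]_d=0$. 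The perfect-square cases $n=k^2$ with $k\ge 4$ are already covered by Nagata's Theorem~\ref{Nagata-generic} with $\bfv=(1,\dots,1)$, so the content is to construct $\bfv$ for the remaining $n\ge 10$.

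For each such $n$, I would pick a decomposition $n=a+(b-1)$ with $a$ chosen so that a Nagata-type bound is already available for $a$ general points on $\bbP\cong \bbP^2$ with a certain multiplicity profile, together with a quasi-homogeneous $\bfv$ (for instance $v_1=\cdots=v_a=1$ and $v_{a+1}=\cdots=v_n=v$ with $v$ tuned so that $\delta^2=a+(n-a)v^2$ takes the desired value), and specialize the $n$ very general points so that $a$ of them lie in very general position on $\bbP$ and the remaining $b-1$ in very general position on $\bbF\cong \mathbb{F}_1$ as in \eqref{degen}. A hypothetical curve of degree $d\le\delta m$ on the general fibre then produces, in the limit, a divisor $C_\bbP+C_\bbF$ satisfying the matching condition \eqref{centralcurve}. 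By the Nagata-type hypothesis on $\bbP$, $C_\bbP$ can exist only if its degree $e$ exceeds the corresponding bound; starting from this constraint, a sequence of standard Cremona transformations on $\bbF$ centred at the blown-up point together with the two currently largest-multiplicity points strictly reduces the virtual degree of $C_\bbF$ at each step, eventually forcing it to be negative, exactly as in the proof of Lemma~\ref{CM-induction}. In the borderline regime where both $C_\bbP$ and $C_\bbF$ could individually exist, the matching condition \eqref{centralcurve} imposes conditions on $E$ lying outside an incomplete linear series, and I would break the matching by arranging the general points on $\bbP$ to lie on a specific auxiliary curve cutting $E$ at non-generic points, precisely as in the proof of Lemma~\ref{CM-base}.

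The hard part is the uniform bookkeeping: for every $n\ge 10$ one must produce a decomposition and a $\bfv$ for which the Cremona reduction on $\bbF$ terminates in a negative virtual degree for \emph{every} admissible pair $(e,d)$, while the matching obstruction on $E$ can be simultaneously arranged. The smallest values $10\le n\le 15$ lie below the Nagata base $n=16$, so the induction has no Nagata configuration on $\bbP$ to lean on and must instead rely on ad hoc choices of $\bfv$ together with carefully chosen auxiliary curves on $E$ that generalize the cubic-through-$9$-points trick of Lemma~\ref{CM-base}; once these base cases are dispatched, a uniform induction on $n$ patterned on Lemma~\ref{CM-induction} propagates the conclusion to every $n\ge 10$.
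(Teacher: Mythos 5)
Your plan follows the same broad route as the paper --- a Ciliberto--Miranda degeneration, a Cremona reduction on $\bbF$, base cases, and an induction --- and the initial reductions (to $[I(mZ_\bfv)]_d=0$ on a single specialization via semicontinuity, and the Fekete argument yielding $\widehat\alpha=\delta$) are correct. The gap lies in the base cases and, especially, the induction step. You propose base cases for all $10\le n\le 15$ with $\bfv$ of the quasi-homogeneous form $(1^a,v^{n-a})$; the paper needs only $n=10,11,12$, with $\bfv=(5,4^{9})$, $(3,2^{10})$, $(2^8,1^4)$ --- none of your proposed shape --- and the $n=10$ case does \emph{not} go through with a variant of the auxiliary-curve matching of Lemma~\ref{CM-base}: it requires a modified degeneration blowing up the central fiber of $Y\to\mathbb D$ along a suitable \emph{rational curve} rather than a point, for which ``carefully chosen auxiliary curves on $E$'' is not a substitute.

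More seriously, ``a uniform induction on $n$ patterned on Lemma~\ref{CM-induction}'' cannot close the argument: that lemma passes from $\delta^2$ to $(\delta+1)^2$ points, a jump of $2\delta+1$, so it does not fill the gaps between your base cases and the squares $16,25,36,\ldots$. What the paper actually uses is the substitution of Exercise~\ref{CHMR-induction}: replace the single multiplicity $m_1$ in an $a$-point configuration $\bfm$ satisfying $\alpha(I(mZ_\bfm))>dm$ by a $b$-tuple $\bfn$, subject to the existence of a maximal $b$-point configuration $\mu$ (so $\widehat\alpha(I(Z_\mu))=\sqrt{\sum\mu_i^2}=:\delta'$) satisfying the numerical side-condition $\sum\mu_i n_i\ge\delta' m_1$; the result is an $(a+b-1)$-point configuration with the same bound $d$. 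Taking $b=4$, $\mu=(1^4)$ already gives steps of $+3$, so $n=10,11,12$ reach every $n\ge10$. Your ``pick a decomposition $n=a+(b-1)$'' gestures at this, but with the rigid quasi-homogeneous $\bfv$ and no analogue of the side-condition $\sum\mu_i n_i\ge\delta' m_1$, there is no guarantee the Cremona reduction on $\bbF$ terminates in a negative virtual degree for every admissible $(e,d)$ --- exactly the uniform bookkeeping you identify as hard, and it is where the proposal stops short of a proof.
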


The method of proof is essentially the same as for Nagata's theorem:
there are three initial cases $n=10,11,12$ and an induction step.
The $n=10$ case is slightly more difficult, and we refer the reader to \cite{CHMR13}
for the complete proof, that uses the same basic principle with
  a modified degeneration obtained by blowing up the central fiber
  of $Y\rightarrow \mathbb D$ along a suitable rational curve.
 The vector of multiplicities in this case is
 $\bfv=(5,4^{9})$ (so that $\delta=13$), and for very general points
  $p_1,\dots,p_{10}\in \bbP^2$,
 and $Z_\bfv=5p_1+4(p_2+\dots+p_{10})$, the inequality
  $\alpha(I(mZ_\bfv))>\delta m$ holds for all $m\ge 1$. The initial cases
  $n=11,12$ and the induction step are left as exercises:

\begin{exercise}$(n=12)$
 Prove that for $\bfv=(2^{8},1^4)$ (so that $\delta=6$),
 and $Z_\bfv=2(p_1+\dots+p_8)+p_9+\dots+p_{12}$
 where $p_1,\dots,p_{12}\in \bbP^2$ are very general points,
  $\alpha(I(mZ_\bfv))>\delta m$ for all $m\ge 1$.
  \\
  \emph{Hint}:
use the Ciliberto-Miranda method of the first section, with
all four $m$-fold points on $\bbP$ and all eight
$2m$-fold points on $\bbF$.
\end{exercise}

\begin{exercise}$(n=11)$
 Prove that for $\bfv=(3,2^{10})$ (so that $\delta=7$),
 and $Z=3p_1+2(p_2+\dots+p_{11})$ where
 $p_1,\dots,p_{11}\in \bbP^2$ are very general points,
  $\alpha(I(mZ_\bfv))>\delta m$ for all $m\ge 1$.
  \\
  \emph{Hint}:
use the Ciliberto-Miranda method of the first section,
with four of the $2m$-fold points on $\bbP$ and the
rest on $\bbF$.
\end{exercise}

\begin{exercise}  \label{CHMR-induction}
(induction step)
Let $a,b,d$ be positive integers, and assume the multiplicity
vectors $\bfm=(m_1,\dots,m_a)$, $\bfn=(n_1,\dots,n_b)$
and $\mu=(\mu_1, \dots, \mu_b)$
satisfy:
\begin{enumerate}
 \item For $p_1, \dots, p_{a}$ very general points in $\bbP^2$,
 and letting $Z_\bfm=m_1p_1+\dots+m_ap_a$, then for each $m\ge 1$, one has
 $\alpha(I(mZ_\bfm))>d m$.
 \item For $p_1, \dots, p_{b}$ very general points in $\bbP^2$,
 and letting $Z_\mu=\mu_1p_1+\dots+\mu_bp_b$, one has
 $\widehat\alpha(I(Z_\mu))=\delta=\sqrt{\sum \mu_i^2}$.
 \item $\sum \mu_i n_i \ge \delta m_1$.
\end{enumerate}
Show that in this case the multiple point scheme
$Z_{\bfm\sharp\bfn}$ determined by the
multiplicity vector $\bfm\sharp\bfn=(n_1,\dots,n_b,m_2,\dots,m_a)$
at $n=a+b-1$ very general points
satisfies that for each $m\ge 1$, it is $\alpha(I(mZ_{\bfm\sharp\bfn}))>d m$.
\\
\emph{Hint}: Show, using the ideas of Exercise \ref{waldschmidt},
that if $\alpha(I_{Z_\bfn})<m_1$, with $\sum \mu_i n_i \ge \delta m_1$,
then $\widehat{\alpha}(I_{Z_\mu})<\delta$.
See also Exercise \ref{ex:extremalQ}.
\end{exercise}

\subsubsection*{How general need the points be?}
Nagata's theorem can be rephrased in terms of \emph{semi-effective divisors}
\cite[Section 1.2]{Har16}. Consider the blow-up $\pi:X\rightarrow \bbP^2$ of $\bbP^2$ at the points
$p_1,\dots,p_n$, denote by
$L$ the pull-back to $X$ of the class of a line, by $E_i$ the class of
the exceptional divisor above $p_i$. A divisor class $D$ is called semi-effective
if for some positive integer $m$ one has $H^0(X,\cO_X(mD))\ne 0$,
i.e., if for some $m > 0$, the divisor $mD$ is linearly equivalent to an effective divisor.
Nagata's theorem is equivalent to the fact that, if $n=\delta^2$ and
$p_1,\dots,p_n$ are very general points, then
the divisor $D_{\delta}=\delta L-E_1-\dots-E_n$
is not semi-effective.

Clearly, without some generality assumption, the divisor $D_\delta$
can be semi-effective (in fact, it can be effective: it suffices to choose
$\delta^2$ points on a curve of degree $\delta$; this shows
that the Severi variety $V_{1,\delta}$ is nonempty).
Harbourne's notes \cite{Har16} raise the question of how large  the
least integer $m$ such that $H^0(X,\cO_X(mD))\ne 0$ can be, when a
divisor $D$ is semi-effective; let us consider the case $D=D_\delta$.
Looking at the Severi varieties $V_{m,\delta m}$ corresponding to multiples of $D_\delta$,
 one easily sees that
\[V_{1,\delta } \subset V_{m,\delta m}\subset V_{km,k\delta m}\]
for all $m$ and $k$ (in particular they are nonempty). On the other hand, the
naïve expectation (which is a lower bound) for the dimension of the
Severi varieties obtained by counting equations is
\[\dim V_{m,\delta m}\ge 2n + \binom{\delta m+2}{2}-
n\binom{m+1}{2} =\frac{(3-\delta)\delta}{2} m +2 \delta^2+1, \]
that is a strictly decreasing function of $m$ if $\delta\ge 4$.
So one naïvely would \emph{not} expect $V_{km,k\delta m}$ to
be strictly larger than $V_{m,\delta m}$ for large $m$; in other words,
it is conceivable that the answer to the following problem is positive:

\begin{problem}
 Is there any bound $m_0=m_0(\delta)$ such that if the
points $p_i$ are chosen so that $mD_\delta$ is not effective for
all $m\le m_0$, then $D$ is not semi-effective?
\end{problem}

If the answer to this question is positive, then the set of $n$-tuples
of points for which Nagata's theorem is true would be a Zariski open
set. One among many consequences that would follow is, for example,
that there would be sets of points with coordinates in $\bbQ$
(or any infinite field) with the Nagata property. Note that Totaro's work
\cite{Tot08} shows that there exist sets of 9 points in $\bbP^2_\bbQ$
with non finitely generated multigraded Rees algebra, but the support
in this case spans a closed cone.

 \subsection{Nagata's conjecture}

 \begin{conjecture}[Nagata, 1959]
   Let $n\ge10$ be an integer.
  If $p_1, \dots, p_{n}$ are generic points in $\bbP^2$,
and $Z=p_1+\dots+p_{n}$, then $\alpha(I(mZ))>m\sqrt{n}$ for all $m\ge 1$.
 \end{conjecture}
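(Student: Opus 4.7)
The plan is to follow the Ciliberto--Miranda strategy that succeeded for Nagata's theorem (Theorem \ref{Nagata-general}) and its generalization (Theorem \ref{thm:chmr}): reduce to a specialization statement, provide base cases for small $n$, and propagate by a degeneration-induction step. Concretely, set $\delta=\sqrt n$ and reformulate as follows: it suffices to exhibit, for each integer pair $(m,d)$ with $d\le \delta m$, a special configuration of $n$ points for which $[I(mZ)]_d=0$, since then the same vanishing persists for general (and hence very general) configurations by semicontinuity on the Severi variety $V_{m\cdot \mathbf 1,d}$.

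For the induction step, one would take the family $\pi: Y\to \mathbb D$ of \eqref{degen} with special fibre $\bbP\cup \bbF$, distribute the $n$ points as $a$ generic points on $\bbP$ and $b-1=n-a$ generic points on $\bbF$, and analyse the hypothetical central curve $C_0=C_\bbP+C_\bbF$: $C_\bbP$ of degree $e$ with $m$-fold points at the $a$ points of $\bbP$, and $C_\bbF$ of degree $d=\lfloor\delta m\rfloor$ with multiplicity $e$ at the point blown up and $m$-fold points at the $b-1$ points of $\bbF$, matching on $E$ as in \eqref{centralcurve}. Picking $a$ to be the largest square $\le n$, the induction hypothesis (Nagata's theorem for $\delta'=\lfloor\sqrt n\rfloor$) rules out $C_\bbP$ for $e\le \delta' m$. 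For the remaining range one would apply a sequence of standard Cremona transformations centred at the three largest multiplicities of $C_\bbF$, as in Exercise \ref{transform-cremona}, and hope to drive the degree strictly below zero or produce a contradiction with the matching condition on $E$, exactly as in the proofs of Lemmas \ref{CM-induction} and \ref{CM-base}. Base cases such as $n=10$ would, in the spirit of Theorem \ref{thm:chmr}, need either a bespoke modified degeneration (blowing up the central fibre along a suitable rational curve) or an ad~hoc argument ruling out curves of degree $\lfloor m\sqrt{10}\rfloor$ with 10 general $m$-fold points.

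The main obstacle, and the reason the conjecture has resisted proof for over half a century, is precisely the irrationality of $\delta=\sqrt n$ when $n$ is not a square. Ciliberto--Miranda's method propagates an \emph{integer} inequality $\alpha(I(mZ))>\delta m$ because $\delta\in\bbZ$ lets every relevant Cremona step alter multiplicities and degrees in matching integral increments; here the best one can hope for at each step is $d=\lceil\delta m\rceil$, and the arithmetic slack can accumulate. Worse, the inductive hypothesis required to match on $E$ (through a condition of the form $\sum \mu_i n_i\ge \delta m_1$ as in Exercise \ref{CHMR-induction}) demands that a configuration already achieve Waldschmidt constant \emph{exactly} $\sqrt n$, which is the conclusion itself; every known realization of the induction step either loses a Nagata-type lower bound or forces integer multiplicities adapted to the particular $n$, as in Theorem \ref{thm:chmr}.

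Consequently, I would not expect a purely combinatorial extension of the above template to close the gap; a successful proof probably requires an ingredient external to the degeneration calculus, such as valuative techniques on the Zariski--Riemann space (as announced in the last sections of the paper), or sharper symbolic-power containment results that convert an irrational Waldschmidt bound into an integral statement stable under induction on $n$. In any case, the concrete checkable target would be either (i) a single new base case $n_0\ge 10$ combined with an inductive step of arithmetic flexibility sufficient to reach every $n\ge n_0$, or (ii) a continuity-type result showing that the known Waldschmidt equalities $\widehat\alpha(I(Z_\bfv))=\sqrt{\sum v_i^2}$ for carefully chosen $\bfv$ force the same equality in the unit-multiplicity limit.
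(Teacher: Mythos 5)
This statement is Nagata's Conjecture, which the paper explicitly labels as a conjecture and states ``remains widely open after more than half a century of intense efforts.'' The paper does not contain (and does not claim) a proof: it only explains that the conjecture is known for $n$ a perfect square (by Nagata's theorem), is equivalent to the statement $\widehat\alpha(I(Z))=\sqrt{n}$, and is open for all non-square $n\ge 10$. So there is no proof in the paper to compare your proposal against.

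Your write-up correctly recognizes this situation: rather than presenting a proof, it is an analysis of why the Ciliberto--Miranda degeneration strategy, which succeeds for Nagata's theorem (Theorems \ref{Nagata-generic}, \ref{Nagata-general}) and for the special multiplicity vectors of Theorem \ref{thm:chmr}, does not extend to uniform multiplicities at non-square $n$. Your diagnosis is on point: the Cremona calculus works with integer degree and multiplicity increments, so propagating an inequality of the form $\alpha(I(mZ))>\delta m$ is substantially easier when $\delta$ is an integer, and the induction step in the style of Exercise \ref{CHMR-induction} requires as input a configuration already achieving its Waldschmidt bound, which is precisely what one is trying to prove. Your suggested alternatives (valuative/Zariski--Riemann methods as in Section 4, or continuity-type results forcing $\widehat\alpha$ to attain $\sqrt n$) are also consistent with the directions the paper proposes, e.g.\ Conjecture \ref{mainconj} is stated in \cite{DHKRS} precisely because it would imply Nagata's conjecture. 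In short: you did not prove the statement, but you could not have, and you correctly identified it as conjectural and gave an accurate account of the obstacles. That is the appropriate outcome here.

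Two small points of caution for the record. First, your phrasing ``drive the degree strictly below zero or produce a contradiction with the matching condition on $E$'' glosses over the fact that for irrational $\delta$ the floor/ceiling discrepancies do not simply accumulate linearly; one has to track them across a potentially unbounded number of Cremona steps, and no uniform control is known. Second, item (ii) of your closing paragraph (deducing the unit-multiplicity case as a limit of the nonuniform cases of Theorem \ref{thm:chmr}) cannot work in the naive form you suggest, because the surfaces $X_n$ change with $n$ and the Waldschmidt constants of the $Z_{\bfv}$ used there are not limits converging to $\sqrt n$ for fixed $n$ in any obvious sense; the paper's own version of this idea (Example \ref{ex:seq}) requires proving good-ray statements that are themselves open.
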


This statement holds true for $n$ a square, by Nagata's theorem,
but it remains open for all other values of $n$.

\begin{remark}
  For nonsquare $n$ the equality
$\alpha(I(mZ))=m\sqrt{n}$ is impossible. Therefore, to prove
Nagata's conjecture it is enough to compute the Waldschmidt constants:
$\widehat\alpha(I(Z))=\sqrt{n}$ for all $n\ge 10$.
\end{remark}

\begin{remark}
 Some bounds are known for $\widehat\alpha(I(Z))$ that approximate
 the square root of the number of points. For instance, \cite{HR09} gives
 \[ \sqrt{n} \,\le\, \widehat\alpha(I(Z)) \,\le\,
 \sqrt{n}\,\sqrt{1+\frac{2}{n^2-5n\sqrt{n}-2}},\]
 if $Z=p_1+\dots+p_n$ consists of general points. Observe that the
 upper bound is a worst case estimate, as all known methods for
 obtaining such bounds give in fact rational numbers.
\end{remark}

One can also look at the question in terms of Seshadri constants
\cite{Dem92}.
The \emph{Seshadri constant} of a set of points $p_1, \dots, p_n$ is
defined as
\[ \varepsilon(p_1, \dots, p_n)=\inf
\left\{\frac{\deg C}{\sum \mult_{p_i}C} \right\},\]
where the infimum is taken with respect to all plane curves
passing through at least one of the points $p_i$.
Equivalently, and denoting as before $\pi:X\rightarrow \bbP^2$
the blow-up at the $n$ points,
$L$ the pull-back to $X$ of the class of a line, $E_i$ the class of
the exceptional divisor above $p_i$,
\[ \varepsilon(p_1, \dots, p_n)=\sup
\left\{ t \in\bbR \left | L- t (E_1+\dots+E_n) \text{ is nef}\right.\right\}
\]

\begin{exercise}
For all choices of $p_1, \dots, p_n \in \bbP^2$, prove that
$ \varepsilon(p_1, \dots, p_n)\le 1/\sqrt{n}$.
\\
\emph{Hint}: Use Exercise \ref{waldschmidt}.
\end{exercise}

In fact, we will see below that
$\varepsilon(p_1,\dots,p_n)=\widehat\alpha(I(Z))^{-1}$ for
$Z=p_1+\dots+p_n$. Therefore, Nagata's conjecture is equivalent
to the claim that, for very general points $p_1,\dots,p_n\in \bbP^2$,
\[ \varepsilon(p_1,\dots,p_n)=1/\sqrt{n}.\]
Following this approach, one can formulate an analogous conjecture
for arbitrary surfaces:

\begin{conjecture}[Biran--Szemberg \cite{Sze04}, {\cite[Remark 5.1.24]{Laz04I}}]\label{SLC}
Let $X$ be a smooth projective surface and $L$ be a nef
  divisor on $X$.  Then there is a
  positive integer $n_0$ such that for every $n\ge n_0$, one has
$\epsilon(n;X,L) = \sqrt{L^2/n}.$
\end{conjecture}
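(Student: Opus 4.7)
The statement is an open conjecture, so any plan is necessarily speculative; nonetheless the framework developed in the preceding sections suggests a natural strategy, modeled on the proofs of Theorems \ref{Nagata-generic} and \ref{thm:chmr}. The plan is to pass from Seshadri constants to a Waldschmidt-type invariant, establish the easy inequality by a volume/Riemann--Roch count, and attack the difficult inequality by a degeneration of $(X,L)$ in the spirit of Ciliberto--Miranda.

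First I would reformulate: writing $\pi:X_n\to X$ for the blow-up at $n$ very general points $p_1,\dots,p_n$ with exceptional divisors $E_1,\dots,E_n$, we need to show that $\pi^*L-t(E_1+\cdots+E_n)$ is nef for some $t$ arbitrarily close to $\sqrt{L^2/n}$, equivalently that the $L$-weighted Waldschmidt constant $\widehat\alpha_L(p_1+\cdots+p_n)$ equals $\sqrt{n\, L^2}$. The upper bound $\widehat\alpha_L\le\sqrt{n L^2}$ follows from asymptotic Riemann--Roch: $h^0(X,\cO_X(mL))$ grows like $m^2 L^2/2$, and imposing multiplicity $\lfloor km/\sqrt n\rfloor$ at each point for $d=km$ costs $\frac{n}{2}\bigl(km/\sqrt n\bigr)^2 +O(m) = m^2 L^2/2 + O(m)$, so nontrivial sections must exist for suitable rational slopes just above $\sqrt{L^2/n}$ (the same trick as in Exercise \ref{waldschmidt}).

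For the lower bound, the idea would be to mimic the degeneration \eqref{degen} but on $X$: blow up the trivial family $\mathbb D\times X\to\mathbb D$ along a smooth curve in the central fibre so that $Y_0=\bbP\cup\bbF$ where $\bbF$ is a ruled surface meeting $X$ (the proper transform) along $E$. A hypothetical curve $C\subset X_u$ violating the Seshadri bound for $(X,L)$ specializes to $C_0=C_\bbF+C_\bbP$ with matching intersections on $E$, as in \eqref{centralcurve}, where the $n$ points split into $a$ on $\bbP$ and $b-1$ on $\bbF$. Choosing $\bbP$ so that Nagata-type inequalities hold for it (for instance, $\bbP\cong\bbP^2$ blown up in the curve, so that Theorem \ref{Nagata-general} or the CHMR induction of Exercise \ref{CHMR-induction} apply), one could try to rule out $C_\bbP$ directly and then use a Cremona-like sequence on $\bbF\cong \bbF_1$ to eliminate $C_\bbF$ for sufficiently many points. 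The flexibility in choosing $n_0$ would be used to guarantee the existence of a workable splitting $n=a+(b-1)$ with the combinatorial inequalities of Exercise \ref{CHMR-induction}(3) satisfied for the data of $(X,L)$.

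The main obstacle, and the reason the conjecture is open, is twofold. First, an arbitrary smooth projective surface admits no Cremona-type birational self-maps, so the induction step used to pass from $\delta$ to $\delta+1$ in Lemma \ref{CM-induction} has no global analogue; one must produce the reduction exclusively inside the auxiliary ruled surface $\bbF$, and this forces very restrictive hypotheses on $b$. Second, the matching condition on $E$ controls only the intersection of $C_0$ with a rational curve, which on a surface of positive geometric genus carries much less information than in the $\bbP^2$ case; the combinatorial miracle that makes Lemma \ref{CM-base} work for $16$ points on $\bbP^2$ is specific to the cubic/quartic pencil geometry and I would expect it to have to be replaced by an honest cohomology vanishing argument (for instance Kawamata--Viehweg or Reider-type) on $\bbF$, tuned to force a non-existence result once $n_0$ is large enough that $L^2/n$ is small compared with the numerical invariants of the degeneration.
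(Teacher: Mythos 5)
The statement you are addressing is labeled as a \emph{conjecture} in the paper (the Biran--Szemberg conjecture), and the paper offers no proof of it --- it remains open in general, as you yourself note. So there is no proof in the paper to compare your proposal against; the paper only states the conjecture, remarks that $n_0 = k^2 L^2$ is expected to work when $|kL|$ contains a smooth curve of positive genus, and moves on to the Mori-cone reformulation of the $\bbP^2$ case.

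As a speculative outline, your plan is coherent: you correctly split the problem into the easy inequality $\epsilon(n;X,L)\le \sqrt{L^2/n}$ (which indeed follows from an asymptotic Riemann--Roch count, as in Exercise~\ref{waldschmidt}) and the hard lower bound, and you correctly reformulate the latter in Waldschmidt-constant language. You also correctly identify the two main reasons a direct Ciliberto--Miranda transplant does not work: the absence of Cremona self-maps of a general surface $X$ (so the inductive degree-lowering step of Lemma~\ref{CM-induction} has no analogue outside the ruled component $\bbF$), and the weakness of the matching condition along $E$ when $X$ has positive irregularity or geometric genus (the rigidity of the cubic pencil that powers Lemma~\ref{CM-base} is special to $\bbP^2$). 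Be aware, though, that this is exactly why the conjecture is open: without a substitute for those two $\bbP^2$-specific mechanisms, the degeneration produces families of candidate curves $C_\bbF + C_\bbP$ that cannot be ruled out, and no Kawamata--Viehweg or Reider-type vanishing on $\bbF$ is currently known to close the gap. Your proposal is therefore a reasonable research program, not a proof, and you should not present the lower-bound step as anything stronger than a heuristic.
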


\begin{remark}
If there exists a smooth curve of positive genus
in the linear system $|kL|$ then $n_0=k^2L^2$
is expected to work in Conjecture \ref{SLC}.
\end{remark}

\bigskip

Summarizing, Nagata's approach to showing that the $k$-algebra $A$ is not
finitely generated has three steps:
\begin{enumerate}
 \item $A$ is isomorphic to a bigraded ring $\oplus_{m,d} A_{m,d}$
 (in fact with  $A_{m,d}= [I(mZ)]_d$),
 \item the \emph{support} $\mathcal{S}=\{(m,d)\,|\, A_{m,d} \ne 0\}$
 is not finitely generated as a semigroup, because
 \item the cone $\co(\mathcal S) \subset \bbR^2$ is not closed.
\end{enumerate}
In these notes, we call \emph{Nagata-type statement} a theorem or
conjecture stating that, for a given multigraded algebra $A$
with some geometric meaning (most often a Rees algebra)
the cone $C$ in $\bbR^n$ spanned by the support semigroup of $A$
is not closed. Of course, this implies that $A$ is not finitely generated.
In the same spirit, we say that a ray $R=co(v)\subset \bbR^n$
is a Nagata-type ray if $R\subset \overline C \setminus C$.

\begin{exercise}
 Show that, if $C$ is a closed cone in $\bbR^2$, then it is finitely generated.
Give an example of a closed cone in $\bbR^3$ that is not finitely generated.
Is it true that if $S\subset \bbZ^2$ spans a closed cone, then $S$ itself is
finitely generated?
\end{exercise}

\section{Conjectures on the cone of curves}

In this section we review some conjectures that generalize
and strengthen Nagata's conjecture, following ideas from \cite{CHMR13}.
We fix for most of the section the hypothesis that $p_1,\dots,p_n$ are $n$
very general points of $\bbP^2$ and consider the blow-up
$f: X=X_n\to \bbP^2$ of the plane at the points $p_1,\ldots, p_n$.
The object of interest is the geometry of $X$,
from the point of view of Mori theory.
More precisely, we shall see what Nagata's
conjecture and its generalizations tell about the shape of
the Mori cone of $X$.

\subsection{Total coordinate ring of the blown up plane}

The \emph{Picard group} $\Pic (X)$
of the blown-up plane $f: X\to \bbP^2$
is the abelian group freely generated by:
\begin {itemize}
\item the \emph{line class}, i.e., the pullback $L=f^*(\cO_{\bbP^2}(1))$;
\item the classes of the  \emph{exceptional divisors} $E_1,\ldots, E_n$ that are contracted to $p_1,\ldots, p_n$.
\end{itemize}
So, \(
\Pic X\cong \mathbb{Z}L\oplus\mathbb{Z} E_1\oplus\cdots\oplus\mathbb{Z}E_n\cong \mathbb{Z}^{n+1}.
\)
More generally we consider $\bbQ$ and $\bbR$-divisor classes and work in
$N^1(X)=\NS(X)\otimes _\bbZ\bbR=\Pic(X)\otimes _\bbZ\bbR\cong\bbR^{n+1}$,
viewed as a real vector space with its standard Euclidean topology.
The real cones spanned by effective (or ample, or nef, etc) divisors
are objects of great interest to understand the geometry of $X$.
This approach was pioneered
by Kleiman in \cite{Kle66} and is explained in detail in \cite[1.4.C]{Laz04I}.

A class $\xi\in N^1(X)$ is \emph{integral} (respectively \emph{rational})  if it sits in $\Pic(X)$ (respectively in $\Pic(X)\otimes _\bbZ \bbQ$). A ray $\co(\xi)$
in $N^1(X)$ is \emph{rational} if it is generated by a rational class.
A rational ray in $N^1(X)$ is \emph{effective} if it is generated by an effective  class.

We will use the notation $\cL=(d;m_{1},\ldots, m_{n})$  for the
complete linear system
\[\cL=\left| dL-\sum_{i=1}^n m_iE_i\right|=
\bbP\left(H^0\left(X,\cO_X\left( dL-\sum m_iE_i\right)\right) \right)\]
on $X$. With this convention the integers $d,m_{1},\ldots, m_{n}$ are the
components with respect to the ordered  basis $(L, -E_1,\ldots, -E_n)$ of $N^1(X)$,
and the intersection form on $N^1(X)$ can we written as follows:
\[ (d; m_1, \dots, m_n)\cdot(d';m'_1,\dots,m'_n)=dd'-m_1m'_1-\dots-m_nm'_n.\]
We also use the shorthand exponent notation, so that $m^k$
denotes $k$-fold repetition of the integer $m$.
Thus the \emph{canonical class} on $X$ is $K_n=(-3; -1^n)$
(called $K$ if there is no danger of confusion).

The pull-back by the blow-up map $f$
induces a natural isomorphism for each $(d, \bfm)\in \bbZ^{n+1}$
\begin{equation}\label{coxrees}
 H^0\left(X,\cO_X\left( dL-\sum m_iE_i\right)\right)
\,\cong\, \left[I(Z_\bfm)\right]_d,
\end{equation}
where $Z_\bfm=m_1p_1+\dots+m_np_n$ as usual.
Therefore, there is an isomorphism between the
\emph{total coordinate ring} (also called Cox ring) of $X$
\[
\mathcal{T C}(X)=\underset{L\in\Pic X}{\mathsmaller{\bigoplus}} H^0(X;L)
\]
and the multigraded Rees algebra
\[
\underset{\bfm \in \bbZ^n, d\ge 0}{\mathsmaller{\bigoplus}} [I(Z_\bfm)]_d,
\]
that by Lemma \ref{fixedring} can be identified with the ring of
invariants of the unipotent action.
The isomorphisms \eqref{coxrees} are compatible with
product operations on each side, so
$\mathcal{T C}(X)$ and $\oplus_{\bfm,d} [I(Z_\bfm)]_d$
are isomorphic as graded algebras.

The support semigroup of $\mathcal{TC}(X)$ is by definition $\Eff X$,
the semi-group in $\Pic X$ of effective classes on $X$, that is,
\[
\mathcal{S}_K=Supp\left(
\underset{\bfm \in \bbZ^n,d\in \bbZ}{\mathsmaller{\bigoplus}}
I(Z_\bfm)\right)\cong Supp(\mathcal{T C}(X) )=
\Eff X=\left\{L\in \Pic X | H^0(X;L)\neq 0\right\}.
 \]
As usual, the algebra $\mathcal{T C}(X)$ is not finitely generated if
its support $\Eff X$ is not so as semi-group.

The \emph{Mori cone} $\Mor(X)$ is the topological closure in
$N^1(X)\cong\bbR^{n+1}$  of the cone
\[\NE(X)=\co(\Eff X)\] of all effective rays, and it is the dual of the
\emph{nef cone} $\Nef(X)$, that is the closed cone
described by all nef rays.
We have
 \[\co(\mathcal{S}_K)\cong\NE(X)=\co(\Eff X) \subset N_1(X). \]
In this language, Nagata's theorem and the generalizations
seen in the first section provide a non-closedness result for $\NE(X)$:

\begin{corollary}\label{nonclosed-ne}
 For $n\ge 10$, there exists a rational ray $\co(\xi)$
 in $\Mor(X)$ that is not contained in
 $\NE(X)$.
\end{corollary}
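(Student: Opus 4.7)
The plan is to use Theorem \ref{thm:chmr} as a black box to exhibit an explicit Nagata--type ray. Fix $n\ge 10$ and let $\bfv=(v_1,\dots,v_n)$ be a multiplicity vector provided by Theorem \ref{thm:chmr}, so that $\widehat\alpha(I(Z_\bfv))=\delta=\sqrt{\sum v_i^2}$ and $\alpha(I(mZ_\bfv))>\delta m$ for every $m\ge 1$, for very general $p_1,\dots,p_n\in\bbP^2$. Working on the blow-up $X$, consider the rational (indeed integral, after clearing the possible square root) class $\xi=\delta L-\sum v_iE_i\in N^1(X)\otimes\bbR$; replacing $\xi$ by an integer multiple if $\delta$ is irrational, the ray $\co(\xi)$ is rational. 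My candidate Nagata--type ray is $\co(\xi)$.

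First I would show $\co(\xi)\subset\Mor(X)$. For each $m\ge 1$, set $d_m=\alpha(I(mZ_\bfv))$; by \eqref{coxrees}, the class $\xi_m=d_m L-\sum mv_iE_i$ is effective, so $\co(\xi_m)\subset\Eff X\subset\NE(X)$. Normalizing, $\frac{1}{m}\xi_m=(d_m/m)L-\sum v_iE_i$, and by definition of the Waldschmidt constant $d_m/m\to\widehat\alpha(I(Z_\bfv))=\delta$ as $m\to\infty$. Hence $\frac{1}{m}\xi_m\to\xi$ in $N^1(X)$, and since $\Mor(X)$ is the topological closure of $\NE(X)$, we get $\xi\in\Mor(X)$ and so $\co(\xi)\subset\Mor(X)$.

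Second, I would show $\co(\xi)\not\subset\NE(X)$ by contradiction. Suppose $\co(\xi)\subset\NE(X)=\co(\Eff X)$. Then $\xi$ can be written as a nonnegative real combination of classes in $\Eff X$; since $\xi$ and the generators of $\Eff X$ lie in the rational lattice $\Pic(X)\otimes\bbQ$, a standard argument (e.g.\ applying a $\bbQ$-linear projection onto the ray $\co(\xi)$) produces a positive integer $N$ such that $N\xi$ itself is effective. Via \eqref{coxrees} this means $[I(NZ_\bfv)]_{N\delta}\ne 0$, i.e.\ $\alpha(I(NZ_\bfv))\le N\delta$, contradicting the hypothesis $\alpha(I(mZ_\bfv))>\delta m$ for all $m\ge 1$. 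Therefore $\co(\xi)$ is a rational ray of $\Mor(X)$ lying outside $\NE(X)$.

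The only delicate technical point is the rationality step in paragraph three: one has to check that a rational ray contained in the conic hull of a set of integral classes actually meets $\Eff X$ in a nonzero element. This follows because $\Eff X$ is closed under positive integral combinations and the cone $\NE(X)$ is generated (over $\bbR_{\ge 0}$) by $\Eff X$, so clearing denominators in any representation $\xi=\sum q_i v_i$ with $q_i\in\bbQ_{\ge0}$, $v_i\in\Eff X$ yields an effective integer multiple of $\xi$. Everything else is a direct reading of Theorem \ref{thm:chmr} through the dictionary \eqref{coxrees}; the real work has already been done in establishing that theorem.
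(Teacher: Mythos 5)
Your proof follows the same route as the paper: invoke Theorem~\ref{thm:chmr} and use the isomorphisms~\eqref{coxrees} to translate the inequalities on $\alpha(I(mZ_\bfv))$ into statements about the ray of $\xi_\bfv=(\delta;v_1,\ldots,v_n)$. You spell out the two inclusions that the paper dispatches in a single sentence — realizing $\xi_\bfv$ as a limit of normalized effective classes $\tfrac{1}{m}(d_m;mv_1,\ldots,mv_n)$ to place it in $\Mor(X)$, and arguing by contradiction (clearing denominators in a putative finite nonnegative combination of effective integral classes) to place it off $\NE(X)$. Both steps are sound, and the denominator-clearing argument is the right mechanism for converting ``no multiple of $\xi_\bfv$ is effective'' into ``$\xi_\bfv\notin\co(\Eff X)$,'' which the paper leaves implicit in the phrase ``by the isomorphisms~\eqref{coxrees}.''

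One sentence in your opening paragraph needs repair, however: ``replacing $\xi$ by an integer multiple if $\delta$ is irrational, the ray $\co(\xi)$ is rational.'' An integer multiple of an irrational vector is still irrational, so scaling cannot rationalize the ray; and $\delta=\sqrt{\sum v_i^2}$ is either an integer or irrational, so there is no ``square root to clear'' in the rational non-integer sense. What actually saves the argument — implicitly in the paper as well — is that the construction underlying Theorem~\ref{thm:chmr} only produces multiplicity vectors $\bfv$ with $\sum v_i^2$ a perfect square: the base cases in \cite{CHMR13} have $\delta=13,6,7$, and the inductive step preserves integrality of $\delta$. So $\xi_\bfv$ is in fact an integral class, and it would be cleaner to cite this feature of the construction directly rather than attempt to repair a hypothetical irrationality by scaling.
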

\begin{proof}
 By Theorem \ref{thm:chmr},  for every $n\ge 10$ there exist multiplicities
 $\bfv=(v_1,\dots,v_n)$
such that, if
$\delta=\sqrt{\sum v_i^2}$,
 then $\alpha(I(mZ_\bfv))>\delta m$ for all $m>1$ and
 $\widehat\alpha(I(Z_\bfv))=\delta$.
 By the isomorphisms \eqref{coxrees}, this implies that
the ray spanned by $\xi_\bfv=(\delta;v_1,\dots,v_n)$
lies in $\Mor(X)\setminus\NE(X)$.
\end{proof}

 On the other hand, Nagata's conjecture is equivalent to
 the following:

\begin{conjecture}
For $n\ge 10$, the ray spanned by the class
$(\sqrt{n};1^n)$ is not contained in $\NE(X)$.
\end{conjecture}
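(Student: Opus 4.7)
I would prove the claimed equivalence with Nagata's conjecture via the isomorphism \eqref{coxrees}: the integer class $(d;m^n)$ lies in $\Eff X$ exactly when $\alpha(I(mZ))\leq d$, and by definition $\NE(X)=\co(\Eff X)$. The easy direction is that if Nagata fails, then $R=\co((\sqrt n;1^n))\subset\NE(X)$: if $n$ is a perfect square, Nagata's theorem (Theorem \ref{Nagata-generic}) already gives the conclusion; otherwise $\sqrt n$ is irrational, and a failure of Nagata produces an effective class $(D;m^n)$ with $m\geq 1$ and necessarily $D<m\sqrt n$, so the nonnegative real combination $(D;m^n)+(m\sqrt n-D)\,L=m(\sqrt n;1^n)$ (with $L=(1;0^n)$ the manifestly effective line class) displays a nonzero point of $R$ inside $\co(\Eff X)=\NE(X)$.

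For the converse, assume Nagata and suppose for contradiction that $t(\sqrt n;1^n)=\sum_i a_i\xi_i$ for some $t>0$, nonnegative reals $a_i$, and $\xi_i=(d_i;m_{1,i},\ldots,m_{n,i})\in\Eff X$. The key step is $S_n$-symmetrization: for very general points $\Eff X$ is invariant under the $S_n$-action on $\Pic X$ permuting the $E_j$, because the property ``$(d;m_1,\ldots,m_n)\in\Eff X$'' depends only on the very-general type of the configuration (by the specialization/semicontinuity argument on the Severi varieties used before Theorem \ref{Nagata-general}) and the very-general locus is itself $S_n$-stable. Hence for each $i$ the symmetrized class $\sum_{\sigma\in S_n}\sigma\,\xi_i=\bigl(n!\,d_i;\,(n!\bar m_i)^n\bigr)$ with $\bar m_i=n^{-1}\sum_j m_{j,i}$ is an effective integer class (a finite sum of effectives). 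Applying Nagata's conjecture to it yields $d_i>\sqrt n\,\bar m_i$ whenever $\bar m_i>0$ (and $d_i\geq 0$ when $\bar m_i=0$). Reading off coordinates in $t(\sqrt n;1^n)=\sum_i a_i\xi_i$ gives $\sum_i a_i d_i=t\sqrt n$, and averaging the $E_j$-components gives $\sum_i a_i\bar m_i=t$, so
\[ 0 \;=\; \sum_i a_i\,(d_i-\sqrt n\,\bar m_i) \]
is a sum of nonnegative terms, strictly positive whenever $\bar m_i>0$. Therefore $a_i=0$ whenever $\bar m_i>0$, forcing $t=\sum_i a_i\bar m_i=0$---a contradiction.

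The main technical obstacle is the $S_n$-invariance of $\Eff X$ at very general points, which rests on the principle that for very general configurations, effectivity of a numerical class is independent of the specific choice of points (and the very-general locus is permutation-invariant). Once this is in hand, the rest is a short convexity calculation on the two-dimensional ``symmetric slice'' of $\NE(X)$, namely its intersection with the plane spanned by $L$ and $E_1+\cdots+E_n$: the ray $R$ sits exactly at the critical slope $\sqrt n$ that Nagata's strict inequality forbids effective symmetric integer classes from reaching or approaching from below, while $L$ is the only mechanism available to raise an effective slope up to $\sqrt n$---which Nagata's conjecture precisely rules out.
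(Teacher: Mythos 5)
The paper asserts this equivalence with Nagata's conjecture without supplying an argument, so there is no author proof to compare against; you are providing one, and your proof is correct. The route you take is genuinely different from what the surrounding material suggests. The machinery built up in Corollary~\ref{nonclosed-ne}, Exercise~\ref{ex:extremalQ}, Exercise~\ref{exercise-extremal} and Lemma~\ref{lem:nef} points to showing that Nagata's inequalities force the class $(\sqrt n;1^n)$ (which has self-intersection zero, hence lies on $\partial\cQ$) to be nef and therefore extremal in $\Mor(X)$; extremality plus the fact that no positive multiple of an irrational class can be an integral effective class then excludes the ray from $\NE(X)=\co(\Eff X)$. You sidestep nefness and extremality entirely: the crucial new ingredient is the $S_n$-invariance of $\Eff X$ at very general points, which you correctly derive from the Severi-variety semicontinuity discussion preceding Theorem~\ref{Nagata-general}, and which lets you symmetrize a putative decomposition $t(\sqrt n;1^n)=\sum a_i\xi_i$ term by term into symmetric integral effective classes $(n!\,d_i;(n!\,\bar m_i)^n)$ to which Nagata's conjecture applies verbatim; the rest is a short convexity check on the slice spanned by $L$ and $E_1+\cdots+E_n$. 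This is more elementary and self-contained, using only \eqref{coxrees} and the generality of the points. Two wording slips that do not affect the logic: in the converse direction the perfect-square case is simply vacuous (Nagata is a theorem there by Theorem~\ref{Nagata-generic}, so ``Nagata fails'' cannot occur and ``already gives the conclusion'' obscures this); and at the end $\sum_i a_i\bar m_i$ is a priori only $\le 0$, not $=0$, though this already contradicts $t>0$ --- to get equality one also observes that $a_i>0$ forces $d_i=\sqrt n\,\bar m_i$, hence (with $d_i\geq 0$) $\bar m_i\geq 0$, and $\bar m_i=d_i=0$ with $\xi_i$ effective forces $\xi_i=0$.
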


We call $\nu_n=\co(\sqrt{n};1^n)$ the \emph{Nagata ray}.

\begin{remark}
Corollary \ref{nonclosed-ne} obviously implies that
$\Eff X$ is not finitely generated, but it is much stronger.
We will see in the next section that, for $n=9$, the cone
$\NE(X)=\Mor(X)$ is closed, but still $\Eff X$ is not finitely
generated.
\end{remark}

 Corollary \ref{nonclosed-ne} also implies that the ray spanned by
 $\xi_\bfv=(\delta;v_1,\dots,v_n)$ is extremal in
$\Mor(X)$. The existence of extremal rays of selfintersection
zero on every $X_n$, $n\ge 9$ was first proved by F.~Monserrat
in \cite{Mon11}.
The following set of exercises leads to a proof
of extremality of $\xi_\bfv$.

Let
\begin{equation}\label{nonegcone}
 \cQ=\cQ_n = \left\{ \xi\in N^1(X) \, \text{ such that }\,  \xi\cdot L\ge 0 \text{ and }  \xi^2\ge 0 \right\}\subset N^1(X)
\end{equation}
 be the
 \emph{nonnegative cone}. Clearly
the ray $\co(\xi_\bfv)$ lies on the boundary $\partial\cQ$
of $\cQ$.

\begin{exercise}
 Show that $\cQ\subseteq \Mor(X)$.
 \\
 \emph{Hint}: use Exercise \ref{waldschmidt}, applied to classes
 $\xi=(d;m_1,\dots,m_n)$ with $\xi^2>0$ via \eqref{coxrees}.
\end{exercise}
\begin{exercise}\label{negative-extremal}
Show that if $C$ is an irreducible curve on a surface $X$
with $C^2<0$, then $[C]$ belongs to every system of generators
of $\Eff X$ and the ray $\co([C])$ is extremal in $\Mor(X)$.
\\
\emph{Hint}: For every positive $d$, the unique effective
divisor in the complete linear system $|dC|$ is $dC$.
\end{exercise}
\begin{exercise}\label{ex:extremalQ}
Show that if a rational class $\xi \in \partial\cQ$
and a class $\eta\in \NE(X)\setminus \cQ_n$ satisfy
$\xi\cdot\eta<0$ then $\xi\in \NE(X)$.
\\
\emph{Hint}: Show that there exists an irreducible curve
$C$ with $C^2<0$ and $C\cdot \xi<0$. Deduce that for suitable
integers $d,\, m$, the class $d\xi-m[C]$ lies in the interior of $\cQ_n$
and hence in $\NE(X)$. Compare with Exercise \ref{CHMR-induction}
\end{exercise}

\begin{exercise}\label{exercise-extremal}
Show that the class $\xi_\bfv=(\delta;v_1,\dots,v_n)$
of corollary \ref{nonclosed-ne} is nef. Deduce that $\xi_\bfv$
is extremal in $\Mor(X)$.
 \\
 \emph{Hint}: If $\xi_\bfv$ were not extremal, then it could
 be written as a finite sum $\sum a_i[C_i]$ with $a_i$ nonnegative
 rational numbers and $C_i$ irreducible curves
 with $C_i^2<0$ and  $C_i\cdot \xi_\bfv=0$.
\end{exercise}
\subsection{Mori's cone theorem and consequences}

Let $K=K_n=(-3,-1^n)$ be the canonical divisor on $X$. For any subset
$S \subset N^1(X)$, let $S^\succcurlyeq$
(respectively $S^\preccurlyeq$, $S^\succ$ and $S^\prec$) be
the subset of $S$ consisting of the
 nonzero classes $\xi$ such that $\xi\cdot K_n\ge 0$
(respectively, $\xi\cdot K_n\le 0$, $\xi\cdot K_n> 0$
and $\xi\cdot K_n< 0$). Rays in $N^1(X)^\preccurlyeq$ spanned by
rational curves
play a special role in Mori's theory
(see theorem \ref{mori} below). The cone
spanned by them is called
\begin{equation*}
R_n\;= \co\left(\left\{[E]\,|\,{E\;\text {rational
		smooth curve with } 0\leq -E\cdot K_n}\right\} \right) \subseteq\; \Mor(X_n)^\preccurlyeq,
\end{equation*}
or simply $R=R_n$ if there is no danger of confusion.
A particularly important case is that of 
\emph{$(-1)$-rays} in $N^1(X)$, namely those spanned by the
class of a $(-1)$-\emph{curve}, i.e., a smooth, irreducible,
rational curve $E$ with $E^2=-1$ (hence $E\cdot K_n=-1$ by adjunction and so $[E]\subset R_n$). 
Every $(-1)$-ray $\co([E])$ is effective and extremal
in $\Mor(X)$ (Exercise \ref{negative-extremal}). 
On $X_n$, $n\ge2$ the cone $R_n$ is in fact spanned by
$(-1)$-rays:

\begin{equation}\label{Raggin}
R_n\;= \co\left(\left\{[E]\,|\,{E\;\text {a}\; (-1)\text{-curve}}\right\} \right) \subseteq\; \Mor(X_n)^\preccurlyeq.
\end{equation}
We now state  Mori's cone theorem in the form
 that is most useful for the rational surface $X_n$:

\begin{theorem}[Mori \cite{Mor82}, see {\cite[1.5F]{Laz04I}}]\label{mori}
 \[\Mor(X_n)= \Mor(X_n)^\succcurlyeq + R. \]
\end{theorem}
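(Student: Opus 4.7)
The plan is to derive the statement from Mori's general cone theorem for smooth projective varieties in characteristic zero, whose substantive input (the existence of rational curves of bounded $-K$-degree, via bend-and-break and reduction mod $p$) is the content of Mori's 1982 paper; what remains is specialization to the rational surface $X_n$.

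First I would invoke the general cone theorem: for any smooth projective variety $X$ over $\bbC$,
$$\Mor(X) \;=\; \Mor(X)^{\succcurlyeq} \,+\, \sum_{j \in J} \bbR_{\ge 0}\,[C_j],$$
where the $C_j$ span the distinct $K_X$-negative extremal rays of $\Mor(X)$ and each $C_j$ can be represented by a rational curve with $0 < -K_X \cdot C_j \le \dim X + 1$. Applied to $X=X_n$ this yields rational curves with $-K_n \cdot C_j \in \{1,2,3\}$, and the desired decomposition follows as soon as each $[C_j]$ is shown to lie in $R$.

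Second I would verify that each such extremal rational curve $C_j$ can be taken to be smooth, so that $[C_j] \in R$ in the sense of the definition preceding the theorem. On a smooth projective surface, a $K$-negative extremal ray corresponds via the contraction theorem to one of three situations: (a) the blow-down of a $(-1)$-curve, (b) a $\bbP^1$-bundle onto a smooth curve, or (c) the contraction of the surface to a point, in which case $X_n$ is a weak del Pezzo (forcing $n \le 8$). In cases (a) and (b) the extremal ray is spanned by the class of a smooth rational curve — a $(-1)$-curve, or a fiber of the bundle, respectively. In case (c), $\Mor(X_n)$ is a rational polyhedron whose extremal rays are all generated by $(-1)$-curves, by the classical geometry of weak del Pezzo surfaces (see e.g.\ Manin's treatment of cubic surfaces).

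Combining the two steps one obtains $\Mor(X_n) \subseteq \Mor(X_n)^{\succcurlyeq} + R$, and the reverse inclusion is immediate, since $R \subseteq \NE(X_n) \subseteq \Mor(X_n)$ ($R$ being spanned by classes of effective curves) and $\Mor(X_n)^{\succcurlyeq} \subseteq \Mor(X_n)$ by definition. The hard part of the argument is thus entirely contained in Mori's theorem itself, and specifically in the bend-and-break construction of rational curves with bounded $-K$-degree on every $K$-negative extremal ray; the specialization carried out above is routine.
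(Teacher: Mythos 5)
The paper states this as a citation to Mori \cite{Mor82} and Lazarsfeld \cite[1.5F]{Laz04I} and gives no proof of its own; your proposal supplies exactly the derivation implicit in that citation, namely specializing the general cone theorem (and the contraction theorem) to the smooth rational surface $X_n$. That is the correct route and your overall argument is sound.

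One imprecision worth flagging: the contraction-to-a-point case (your case (c)) only occurs when $\rho(X)=1$, i.e.\ for $\bbP^2$ itself ($n=0$), and in that situation the extremal ray is spanned by the class of a line, not by a $(-1)$-curve (there are none on $\bbP^2$). It is not the case (c) of the contraction theorem that corresponds to $X_n$ being a weak del Pezzo with $n\le 8$ --- a del Pezzo $X_n$ with $1\le n\le 8$ has $\rho(X_n)=n+1\ge 2$, so its individual extremal contractions are still of type (a) (or (b) when $n=1$). Concretely, for $n\ge 2$ we always have $\rho(X_n)\ge 3$, so every $K$-negative extremal contraction is the blow-down of a $(-1)$-curve, making the check that the spanning curves are smooth rational entirely routine. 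This slip does not affect your conclusion, since in all cases the $K_n$-negative extremal rays are spanned by smooth rational curves and hence lie in $R$.
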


The \emph{negative part} $R$ in the Mori decomposition
of $\Mor(X)$ is well understood, whereas known descriptions of
$\Mor(X)^\succcurlyeq$ are only conjectural if $n\ge 10$.
We summarize a few facts known about $(-1)$-curves
that appropriately describe $R$.
 
Recall from Exercise \ref{transform-cremona} that applying
a standard Cremona map based at the points $p_1, p_2, p_3$
to curve of degree $d$
with multiplicity $m_i$ at the point $p_i$ transforms it into a curve of
degree $2d-m_1-m_2-m_3$ with multiplicity $d+m_i-m_1-m_2-m_3$
at the point $p_i$. The \emph{arithmetic Cremona transformation}
based at the points $p_i, p_j, p_k$ is the automorphism
\( N^1(X) \longrightarrow N^1(X)\)
that maps the class $(d;m_1,\dots,m_n)$ to
$(d';m_1',\dots,m_n')$, where
\begin{align*}
 d'&=2d-m_i-m_j-m_k, & m_\ell'&=m_\ell \; \forall \ell \notin \{i,j,k\} \\
 m_i'&=d-m_j-m_k, &  m_j'&=d-m_i-m_k, &
 m_k'&=d-m_i-m_j.
\end{align*}
A divisor $D$ is called 1--\emph{connected} (classically, \emph{virtually connected})
if it is effective and, for every decomposition
$D=D_1+D_2$ where $D_1$ and $D_2$ are effective,
$D_1\cdot D_2>0$ (cf. {\cite[Chapter II.12]{BPV}).
\begin{theorem}[Hudson-Nagata]\label{hudson-test}
Assume $n\ge 3$.
Let $\xi=(d;m_1,\dots,m_n) \in\Pic X$ be a class with $\xi^2=K\cdot \xi=-1$, with
$d\ge 0$ and $m_i\ge 0$.
The following are equivalent:
\begin{enumerate}
\item $\xi$ is the class of a $(-1)$-curve $E$.
\item $\xi$ is the class of a 1--connected divisor $D$.
\item  Recursively applying arithmetic Cremona transformations
based at the three points with largest multiplicities,
the degree $d$ decreases at each step and the final class
is a permutation of the multiplicities in $(0;-1,0^{n-1})$.
\end{enumerate}
\end{theorem}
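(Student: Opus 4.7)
The plan is to establish the equivalences via (1)$\Leftrightarrow$(2) through adjunction and the classical structure of 1-connected divisors, and (1)$\Leftrightarrow$(3) through arithmetic Cremona reduction.

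For (1)$\Rightarrow$(2): a $(-1)$-curve $E$ is irreducible and reduced, and since $E^2=-1<0$ the class $[E]$ contains only $E$ itself as effective divisor (the negative self-intersection forces $E$ into the base locus of $|E|$); irreducible effective divisors are vacuously 1-connected. For (2)$\Rightarrow$(1): if $D$ is 1-connected with $[D]=\xi$, adjunction gives $2p_a(D)-2=\xi^2+K\cdot\xi=-2$, so $p_a(D)=0$; the classical fact that a 1-connected effective divisor of arithmetic genus zero on a smooth surface is irreducible and reduced (see \cite[Chapter II.12]{BPV}) forces $D$ to be a smooth rational curve with $D^2=-1$, i.e., a $(-1)$-curve.

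For (1)$\Rightarrow$(3): the arithmetic Cremona transformation, being the Weyl-group reflection in the root $\alpha_0=L-E_i-E_j-E_k$, preserves both $\xi^2$ and $K\cdot\xi$; by Exercise \ref{transform-cremona} it takes the class of an irreducible curve (distinct from the contracted lines) to the class of another irreducible curve. The key numerical step is to show that if $\xi=(d;m_1,\dots,m_n)$ is the class of an irreducible $(-1)$-curve with $m_1\ge\cdots\ge m_n\ge 0$ and $d\ge 1$, then $m_1+m_2+m_3>d$, so that the corresponding Cremona strictly decreases $d$. I would prove this by combining the identities $\sum m_i=3d-1$ and $\sum m_i^2=d^2+1$ with the intersection inequalities $m_i+m_j\le d$ that arise from $E\cdot(L-E_i-E_j)\ge 0$ (valid unless $E$ equals the line $L-E_i-E_j$, which covers the base case $d=1$). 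Iterating, the degree strictly decreases while remaining nonnegative, so the algorithm terminates with $d=0$, at which point the equations $\sum m_i=-1$ and $\sum m_i^2=1$ force the class to be a permutation of $(0;-1,0^{n-1})$.

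For (3)$\Rightarrow$(1): the terminal class $(0;-1,0^{n-1})$ is the class of an exceptional divisor, a $(-1)$-curve; since each arithmetic Cremona is realized by a birational map between blown-up planes taking irreducible $(-1)$-curves to irreducible $(-1)$-curves, running the sequence of Cremonas in reverse produces an irreducible $(-1)$-curve of class $\xi$. The main obstacle will be the numerical step $m_1+m_2+m_3>d$ in (1)$\Rightarrow$(3): weak variants like $m_1+m_2+m_3\ge d$ follow from Cauchy--Schwarz-type estimates, but the strict inequality, essential for termination of the algorithm, requires exploiting the odd self-intersection $\xi^2=-1$ together with the irreducibility of $E$ to rule out the borderline equality case.
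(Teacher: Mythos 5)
The paper does not prove this theorem; it refers to Nagata and Alberich-Carramiñana for the proof, so I can only assess your proposal on its own merits. Your treatment of $(1)\Rightarrow(2)$, $(1)\Rightarrow(3)$ and $(3)\Rightarrow(1)$ is basically sound, but there is a genuine gap in $(2)\Rightarrow(1)$: the ``classical fact'' you cite --- that a $1$-connected effective divisor of arithmetic genus zero on a smooth surface is irreducible and reduced --- is \emph{false}. The standard counterexample is the fundamental cycle of an $A_2$ rational double point: two $(-2)$-curves $C_1$, $C_2$ meeting transversally at one point give $D=C_1+C_2$ with $C_1\cdot C_2=1$ (so $D$ is $1$-connected), $D^2=-2$, $K\cdot D=0$, hence $p_a(D)=0$, yet $D$ is visibly reducible. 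One can even cook up $1$-connected reducible $D$ with $D^2=K\cdot D=-1$ and $p_a(D)=0$ (e.g.\ $D=E_1+E_2+C$ with $E_i$ disjoint $(-1)$-curves each meeting a $(-3)$-curve $C$ transversally), so the extra numerical hypotheses of the theorem alone do not rescue the claim. The implication $(2)\Rightarrow(1)$ is still plausible \emph{in the setting of the theorem} because on the blow-up of $\mathbb{P}^2$ at very general points the only irreducible curves with negative self-intersection are $(-1)$-curves, which rules out the configurations above --- but that is exactly the extra geometric input your argument would need and does not supply. As it stands, $(2)\Rightarrow(1)$ is not proved.

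A smaller remark on $(1)\Rightarrow(3)$: you worry that the strict Noether inequality $m_1+m_2+m_3>d$ might need the irreducibility of $E$ to kill a borderline case, but in fact the strict inequality already follows from the pure numerics. Order $m_1\ge m_2\ge\dots\ge m_n\ge 0$, set $s=m_1+m_2+m_3$, and suppose $s\le d$. Using $m_1^2+m_2^2+m_3^2\le m_1 s$ and $\sum_{i\ge 4}m_i^2\le m_3\sum_{i\ge 4}m_i$ one gets
\[
d^2+1=\sum m_i^2\;\le\; m_1s+m_3\bigl((3d-1)-s\bigr)\;\le\; s^2+m_3\,(3d-1-3s),
\]
the last step using $m_1\le s-2m_3$. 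If $s=d$ this forces $d^2+1\le d^2-m_3$, impossible; if $s\le d-1$ then $3d-1-3s\ge 2$ and $m_3\le s/3$ give $d^2+1\le s(d-\tfrac13)\le(d-1)(d-\tfrac13)$, again impossible for $d\ge1$. So no irreducibility is needed for the strict inequality. Where irreducibility really enters is in guaranteeing $m_i+m_j\le d$ (from $E\cdot(L-E_i-E_j)\ge0$ for $E$ not the line through $p_i,p_j$), which is what keeps the multiplicities $m_\ell'=d-m_i-m_j$ nonnegative after each Cremona step, so the numerical Noether argument can be applied again. You should make that dependence explicit, since it is what makes the iteration legitimate and what separates the classes in (1) from arbitrary numerical solutions of $\xi^2=K\cdot\xi=-1$.
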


We refer to \cite{Nag60} and \cite[Chapter 5]{Alb02} for proofs.
The third equivalent condition in theorem \ref{hudson-test}
is known as ``Hudson's test'' and can be effectively used to
find all $(-1)$ curves of a given degree in $X_n$.

\begin{exercise}
Show that, if $n\le 8$, there are finitely many $(-1)$-curves on
$X$ and finitely many Cremona maps whose indeterminacy locus
is contained in $\{p_1,\dots,p_n\}$. Specifically, justify the
numbers in this table (where $\#(-1)$ denotes the number of
$(-1)$-curves in $X_n$):
\begin{center}
\begin{tabular}{c|cccccc}
$n$ & 3 & 4 & 5 & 6 & 7 & 8 \\
\hline
\#$(-1)$& 6 & 10& 16 & 27 & 56 & 240\\
$\max \deg(-1)$ & 1 & 1 & 2 & 2 & 3 & 5 \\
\end{tabular}
\end{center}
Deduce that the locus of sets of $n\le 8$
points in Cremona-general position is a Zariski open set.
\\
\emph{Hint}: Start with the class $(0;-1,0^{n-1})$ and perform
all possible arithmetic Cremona transformations that increase
the degree. Compute the number of permutations of each
class obtained in this way.
\end{exercise}
It is not hard to see (use the Nakai-Moishezon Criterion,
 \cite[V.1.10]{Har77}) that for $n\le 8$, the divisor $-K_n$ is ample, hence
 $\Mor(X)\subseteq \Mor(X)^\prec$ and so
  $\Mor(X)=R_n$ by Mori's theorem. These are \emph{Del Pezzo} surfaces.
 As there are only finitely many $(-1)$-curves on $X$, the cone
 $\Mor(X)$ is polyhedral.
 If $\kappa_n=\co(3;1^n)$ is the \emph {anticanonical ray},
 then $\kappa_n$ is in the interior of the nonnegative cone $\cQ_n$.

\begin{exercise}
Show that, if $n \ge 9$, there are infinitely many $(-1)$-curves on
$X$ and infinitely many Cremona maps whose indeterminacy locus
is contained in $\{p_1,\dots,p_n\}$. Deduce that the locus of sets of
points in Cremona-general position is dense but not Zariski open
in $(\bbP^2)^n$.
\end{exercise}

When $n=9$, the anticanonical divisor $-K$ is an irreducible curve with self-intersection 0.
Hence $\kappa$ is nef, sits on  $\partial\cQ$,
and the tangent hyperplane to $\partial\cQ$ at $\kappa$
is the hyperplane $\kappa^\perp$  of classes $\xi$ such that
$\xi\cdot K=0$.
Then $\Mor(X)^\succcurlyeq=\kappa$ and
$\Mor(X)=\kappa+R\subseteq   \Mor(X)^\preccurlyeq$.
The infinitely many $(-1)$-curves on $X$
determine infinitely many $(-1)$-rays,  and $\kappa$ is the only limit ray
of the $(-1)$-rays.
The anticanonical ray $\kappa_9$ coincides with the Nagata ray $\nu_9$.

 Since the classes of all $(-1)$ curves must belong to every
 system of generators of $\Eff X$, it follows that $\Eff X$
 and the total coordinate ring $\mathcal{TC} (X)$ are not finitely
generated as soon as $n\geq 9$. In this case, $\bbC[\bfx,\bfy]^K$
is a counterexample to Hilbert's 14-th problem.
In fact, it is possible to exhibit explicit
configurations of points whose blowup contains
infinitely many $(-1)$-curves. The interested reader can
find details in Mukai \cite{Muk04} and Totaro \cite{Tot08}
(see also Exercise \ref{points-cubic} below). Mukai
shows, more generally, that when the points $p_1,\ldots,p_n$
are sufficiently general in $\bbP^{r-1}$, the inequality
$n\geq \frac{r^2}{r-2}$
is a sufficient condition for the existence of infinitely many negative
divisors, and hence for non finite generation of $\bbC[\bfx,\bfy]^K$.
Totaro gives specific sets of points that work
over every field (including finite fields).

For $n\ge10$, the shape of $\Mor(X)$ is not well known.
$-K_n$ is not effective, and has negative self-intersection $9-n$.
Hence $\kappa_n$ lies off the nonnegative cone $\cQ_n$, which in turn has non-empty
 intersection with both $\Mor(X)^\succ$ and $\Mor(X)^\prec$.
 The rays spanned by the infinitely many $(-1)$-curves on $X$
 lie in $\Mor(X)^\prec$ and their limit rays
lie at the intersection of $\partial\cQ$ with the hyperplane
 $\kappa^\perp$.
  The Nagata ray $\nu$ sits on $\partial\cQ^\succ$.
  The plane  joining the rays $\kappa$ and $\nu$ is the
   \emph{homogeneous slice}, formed by the classes of \emph{homogeneous
    linear systems} of the form $(d;m^n)$, with $d\ge 0$.

\subsection {Nagata-type statements for extremal rays}
\label{ssec:more}

Nagata's conjecture states a necessary condition for the linear
system $\cL=(d;m_1,\dots,m_n)$ to be nonempty. In fact there
is a stronger conjecture that posits what the
dimension of $\cL$ should be, and that has also been
open for decades, namely the Segre-Harbourne-Gimigliano-Hirschowitz
(or SHGH) conjecture
(see  \cite {Seg62}, \cite{Har86}, \cite{Gi87}, \cite{Hir89} \cite{CM01}, quoted in chronological order).

\begin{conjecture}[SHGH Conjecture]
Let $d\ge 0, m_i\ge 0$ be such that $(d;m_1,\dots,m_n)\cdot E\ge0$
for every $(-1)$-curve $E$. Then
\begin{equation}\label{nonspecial}
\dim \left| dL-\sum_{i=1}^n m_iE_i\right|=
\max \left\{-1,\frac{d(d+3)}2-\sum_{i=1}^n\frac{m_i(m_i+1)}2 \right\}.
\end{equation}
\end{conjecture}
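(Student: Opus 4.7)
The plan splits into a trivial lower bound and a hard upper bound. Counting the linear conditions imposed on degree-$d$ polynomials by the scheme $Z_\bfm$ gives immediately $\dim|dL-\sum m_iE_i|\ge \binom{d+2}{2}-1-\sum\binom{m_i+1}{2}$, which coincides with $\max\{-1,\chi(\cO_X(dL-\sum m_iE_i))-1\}$ whenever the virtual dimension is nonnegative; so the full content of the conjecture is the reverse inequality, i.e., \emph{non-speciality} of the system. By Riemann--Roch on $X$ together with $h^2=0$ for $d\ge -2$ (via Serre duality, since $K_n-(dL-\sum m_iE_i)$ then has negative intersection with $L$), non-speciality is equivalent to $h^0\cdot h^1=0$: one must show emptiness if $\chi\le 0$ and $h^1=0$ if $\chi>0$.

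The first step would be to reduce to standard form via the Cremona action of Exercise \ref{transform-cremona}. An arithmetic Cremona transformation based at the three largest multiplicities preserves $\xi^2$, $\xi\cdot K_n$, and both sides of \eqref{nonspecial}; it also permutes the $(-1)$-curves, and hence preserves the nonnegativity hypothesis. By Hudson's test (Theorem \ref{hudson-test}), iterating either terminates in a class of negative degree (whence $h^0=0$ and we are done) or in a \emph{standard form} where $m_1\ge\cdots\ge m_n\ge 0$ and $d\ge m_1+m_2+m_3$. Thus we may restrict attention to standard-form classes.

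For a class in standard form I would attempt the Ciliberto--Miranda degeneration $Y\to\mathbb D$ of \eqref{degen}: split $n=a+b-1$, specialize $a$ points onto $\bbP$ and $b-1$ onto $\bbF$, and study the flat limit of the linear system on the general fibre. Such a limit is a pair $(C_\bbP,C_\bbF)$ matching along $E$, so upper semicontinuity gives $h^0(\text{general})\le h^0_{\text{matched}}(\text{central})$. Choosing the splitting so that the two restricted systems are inductively non-special of their expected dimensions and the trace maps into $H^0(E,\cO_E(d'))$ are of maximal rank, the matched count on the central fibre equals the expected value, closing the induction. The base of the induction is the Del Pezzo range $n\le 8$, where finiteness of the $(-1)$-curves (see the table following Theorem \ref{hudson-test}) makes the conjecture a finite verification, class by class.

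The \textbf{main obstacle}, and the reason this conjecture has resisted attack for decades, is precisely controlling the traces on $E$. For classes near $\partial\cQ_n$ the two restricted systems on $E\cong\bbP^1$ typically have complementary dimensions inside $H^0(E,\cO_E(d'))$ and one needs them to meet transversely there; failure of transversality forces unexpected central-fibre components supported on $E$, breaking the induction. No single choice of splitting $n=a+b-1$ works for all classes simultaneously, and a uniform treatment seems to require either a finer degeneration (along the lines of the $n=10$ argument in Theorem \ref{thm:chmr}, where a rational curve inside the central fibre is itself blown up) tailored to each potentially special class, or a genuinely new idea to bypass the matching analysis altogether.
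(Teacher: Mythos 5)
You are being asked to prove the SHGH Conjecture, which is precisely that: a conjecture. The paper states it (with references to Segre, Harbourne, Gimigliano, Hirschowitz, and Ciliberto--Miranda) but neither proves it nor claims to; it has been open since Segre's 1961 formulation, and the paper treats it as a motivating open problem whose consequences (Nagata, de Fernex, the $\Delta$-conjecture) it then explores. So there is no ``paper's own proof'' to compare against, and any submission presenting a complete proof should have been treated with extreme suspicion. You did not do that: your text is a strategy sketch, not a proof, and you explicitly say so in the final paragraph. That honesty is the right call.

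As a sketch, what you wrote is an accurate account of the known partial approach. The lower bound by condition-counting, the reduction of the conjecture to $h^0\cdot h^1=0$ via Riemann--Roch and the vanishing of $h^2$, the reduction to standard form by arithmetic Cremona transformations (and the observation that the hypothesis $\xi\cdot E\ge0$ for all $(-1)$-curves keeps multiplicities nonnegative through the iteration, since $E_i$ and the lines through pairs of points are themselves $(-1)$-curves), and the Ciliberto--Miranda degeneration with matching along $E$ are all the standard moves. Your ``main obstacle'' paragraph correctly identifies where every such attempt breaks: controlling the traces on $E$ uniformly, and in particular the transversality of the two restricted systems inside $H^0(E,\cO_E(d'))$, is exactly what forces case-by-case splittings $n=a+b-1$ (and, for $n=10$, the finer blow-up of a curve in the central fibre used in \cite{CHMR13}) and is why the degeneration method has so far only produced theorems for special families of multiplicity vectors rather than the full conjecture.

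One small inaccuracy worth flagging: you invoke Theorem~\ref{hudson-test} (Hudson's test) to justify the Cremona reduction of an arbitrary class to standard form or to negative degree. As stated in the paper, Hudson's test is a criterion for recognizing when a \emph{fixed} class with $\xi^2=\xi\cdot K=-1$ is the class of a $(-1)$-curve; the general reduction of an arbitrary $(d;m_1,\dots,m_n)$ to a standard or degenerate form under repeated Cremona based at the three largest multiplicities is a closely related but separate classical fact (present in Nagata's and Gimigliano's work) and is not literally what Theorem~\ref{hudson-test} asserts. The argument survives, but the citation should be to the general Cremona reduction lemma rather than to Hudson's test. None of this affects the overall verdict: you have correctly described a hard open problem as open, and your proposal neither proves it nor pretends to.
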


This conjecture expresses the expectation that the conditions
imposed by the multiple points should be independent, except
when the system meets negatively a $(-1)$-curve, the only
known case in which the conditions become dependent.
One can compute which linear systems are expected to be
nonempty according to the SHGH conjecture, and
obtain the following conjecture, first proposed by
T.~de~Fernex in \cite {dF10}.

\begin {conjecture}[De Fernex conjecture]
\label{conj:df} If $n\ge 10$, then
\begin{equation}\label{eq:df}
\Mor(X)=\cQ^\succcurlyeq \;+\; R,
\end{equation}
where $\cQ$ is the nonnegative cone \eqref{nonegcone} and   $R$ is the negative part in the Mori decomposition of $\Mor(X)$ ( as in \eqref{Raggin}).
\end{conjecture}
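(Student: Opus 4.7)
The plan is to split the statement into its two inclusions and reduce the nontrivial one, via Mori's cone theorem and iterated subtraction of $(-1)$-curves, to a limit argument on the boundary of $\cQ$.

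First, the containment $\cQ^\succcurlyeq + R \subseteq \Mor(X)$ is the easy direction: by convexity of $\Mor(X)$ it reduces to $R \subseteq \Mor(X)$ (immediate from the definition of $R$) and $\cQ^\succcurlyeq \subseteq \cQ \subseteq \Mor(X)$, where the latter is the content of the exercise following \eqref{nonegcone}, which uses the Riemann--Roch style bound on $\dim [I(Z_\bfm)]_d$ for classes with $\xi^2>0$.

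For the converse $\Mor(X) \subseteq \cQ^\succcurlyeq + R$, Mori's cone theorem \ref{mori} reduces the problem to showing $\Mor(X)^\succcurlyeq \subseteq \cQ^\succcurlyeq + R$. I would pick $\eta \in \Mor(X)^\succcurlyeq$ and note that $\eta \cdot L \ge 0$ because $L$ is nef, so either $\eta \in \cQ^\succcurlyeq$ (and we are done) or $\eta^2<0$. In the second case $\eta$ cannot be nef, because on a smooth projective surface every nef $\bbR$-divisor has nonnegative self-intersection (a consequence of Kleiman's characterization of nef as the closure of ample). Hence some irreducible curve meets $\eta$ negatively; granting the SHGH-type input that on $X_n$ with very general points the only irreducible curves of negative self-intersection are $(-1)$-curves, I could take this curve to be a $(-1)$-curve $E$ and replace $\eta$ by
\[
\eta' \,=\, \eta - c\,E, \qquad c \,=\, -\,\eta\cdot E > 0,
\]
so that $\eta'\cdot E = 0$. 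A direct computation gives $(\eta')^2 = \eta^2 + c^2$ and $\eta' \cdot K = \eta\cdot K + c$, so $\eta'$ stays in $\Mor(X)^\succcurlyeq$, has strictly larger self-intersection, and the extracted term $c\,E$ lies in $R$. Iterating, the hope is that the corrected classes converge to a limit in $\cQ^\succcurlyeq$, with the corrections summing to an element of $R$.

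The main obstacle is termination. For $n \ge 10$ there are infinitely many $(-1)$-curves whose rays accumulate along $\partial\cQ \cap \kappa^\perp$, and nothing in this scheme prevents $\eta' \cdot L$ from drifting toward zero or becoming negative after infinitely many corrections. Controlling the limit amounts to understanding the accumulation of $(-1)$-rays against $\partial\cQ$ and, in particular, requires excluding effective ray structures on the boundary of $\cQ^\succcurlyeq$ that are not limits of $(-1)$-sums; this is essentially equivalent to SHGH-style input. Since even the single Nagata ray $\nu_n$ encodes Nagata's conjecture and already sits on the hardest part of $\partial\cQ^\succcurlyeq$, any such proof strategy must at minimum resolve Nagata's conjecture, which is why this conjecture has remained open.
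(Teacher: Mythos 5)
The statement you were asked to prove is labeled \emph{Conjecture} \ref{conj:df}, and the paper gives no proof of it: it is the de Fernex conjecture, stated precisely because it is open, and Theorem \ref{thm:df}(v) only derives consequences \emph{assuming} it holds. So there is no proof in the paper to compare against, and your ``proof proposal'' is, rightly, an analysis of why the natural attack fails rather than a proof. You correctly locate the obstruction, but two points deserve to be made sharper. First, the reduction you set up already consumes unproven input: the claim that for $n\ge 10$ and very general points the only irreducible negative curves on $X_n$ are $(-1)$-curves is itself a conjectural SHGH-type statement. The adjunction argument forcing $C^2\ge -2$ (and rationality) for negative curves requires $-K$ to be effective, which holds for $n\le 9$ or for points on a cubic, but fails for $n\ge 10$ very general points; so you cannot simply ``grant'' this and treat it as a side input. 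Second, even granting it, the iterated subtraction of $(-1)$-curves is a Zariski-decomposition-style procedure, and termination with limit in $\cQ^\succcurlyeq$ and corrections summing to an element of $R$ is precisely the content of \eqref{eq:df}, not a lemma one could establish en route. Your closing observation is the right one: any such argument would in particular settle Nagata's conjecture at $\nu_n\subset\partial\cQ^\succcurlyeq$, which is exactly why the statement remains a conjecture. The easy inclusion $\cQ^\succcurlyeq+R\subseteq\Mor(X)$ is fine as you state it; everything after that should be presented as a description of the difficulty, not as a proof sketch with a ``gap to fill.''
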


Let \[
D_n=(\sqrt {n-1}, 1^n)\in N^1(X)
\]
 be the \emph{de Fernex} class
  and $\delta_n=\co(D_n)$ the corresponding ray. One has $D_n^2=-1$, and  $D_n\cdot K_n=n-3\sqrt{n-1}=\frac{n^2-9n+9}{n+3\sqrt{n-1}}>0$
for $n\geq 8$ and, if $n=10$, one has $D_n=-K_n$.
  Set
 \begin{equation}\label{Delta}
 \begin{split}
  &\Delta_n^\succcurlyeq = \left\{\xi\in N^1(X) \text{such that }  \xi\cdot D_n\ge 0\right\}
  \\
&\Delta_n^\preccurlyeq = \left\{\xi\in N^1(X) \text{such that }  \xi\cdot D_n\le 0\right\}.
 \end{split}
 \end{equation}

\begin{theorem}[de Fernex \cite{dF10}]\label{thm:df} If $n\ge 10$ then:
\begin{itemize}
\item [(i)] all $(-1)$-rays lie in the cone $\cD_n:=\cQ_n-\delta_n$;
\item [(ii)] if $n=10$, all $(-1)$-rays lie  on the boundary  of the cone $\cD_n$;
\item [(iii)] if $n>10$, all $(-1)$-rays lie  in the complement of the cone $\kappa_n:=\cQ_n-\kappa_n$;
\item [(iv)] $\Mor(X)\subseteq \overline{\kappa+R}$;
\item [(v)] if  Conjecture \ref {conj:df} holds, then
\begin{equation}\label{eqdf2}\Mor(X) \cap \Delta_n^\preccurlyeq=\cQ_n\cap \Delta_n^\preccurlyeq.\end{equation}
\end{itemize}
\end{theorem}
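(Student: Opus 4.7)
All five parts derive from a single arithmetic identity, which I would establish first. By Theorem~\ref{hudson-test}, a non-exceptional $(-1)$-curve class $\xi=(d;m_1,\dots,m_n)$ has $d\ge 0$, $m_i\ge 0$, and the conditions $\xi^2=K_n\cdot\xi=-1$ yield $\sum m_i=3d-1$, $\sum m_i^2=d^2+1$; hence
\[
a:=\xi\cdot D_n = d\sqrt{n-1}-(3d-1)=1+d(\sqrt{n-1}-3)\ge 1\quad\text{for }n\ge 10,
\]
with equality precisely when $n=10$ (the exceptional $E_i$ contribute $a=1$ directly). For (i) I would expand $(\xi+sD_n)^2=(a^2-1)-(s-a)^2$: since $a\ge 1$, this is nonnegative on a subinterval of $[0,\infty)$, and choosing such $s$ gives $q:=\xi+sD_n\in\cQ_n$ (the sign $q\cdot L\ge 0$ is automatic), so $\xi=q-sD_n\in\cQ_n+\bbR_{\ge0}(-D_n)=\cD_n$. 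Part (ii) is then immediate: for $n=10$, $a=1$ forces $s=1$ and $q^2=0$, placing $q$ on $\partial\cQ_{10}$ and the ray $\co(\xi)$ on $\partial\cD_{10}$. For (iii), the analogous pencil gives $(\xi-sK_n)^2=-1+2s+(9-n)s^2$, a downward parabola (for $n>9$) with maximum $-1+\tfrac{1}{n-9}<0$ (when $n>10$) attained at $s=\tfrac{1}{n-9}>0$; so no choice of $s\ge 0$ puts $\xi-sK_n$ in $\cQ_n$, giving $\xi\notin\kappa_n=\cQ_n+\bbR_{\ge 0}(K_n)$.

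For (iv), my approach is to combine Mori's cone theorem~\ref{mori}, $\Mor(X)=\Mor(X)^\succcurlyeq+R$, with the observation after~\eqref{Raggin} that every accumulation ray of $(-1)$-rays lies on $\partial\cQ\cap\kappa^\perp$: these limit rays together with $\kappa$ generate $\Mor(X)^\succcurlyeq$ and all lie in $\overline{\kappa+R}$, while $R\subseteq\overline{\kappa+R}$ is trivial. Part (v) assumes Conjecture~\ref{conj:df}: writing $\xi\in\Mor(X)\cap\Delta_n^\preccurlyeq$ as $\xi=q+r$ with $q\in\cQ^\succcurlyeq$ and $r=\sum t_i e_i\in R$, part (i) forces $r\cdot D_n\ge\sum t_i\ge 0$, hence $q\cdot D_n\le 0$; a convex-geometric argument exploiting that every extremal ray of $R$ lies strictly inside $\Delta_n^\succ$ should force $r=0$, yielding $\xi=q\in\cQ^\succcurlyeq\cap\Delta_n^\preccurlyeq\subseteq\cQ_n\cap\Delta_n^\preccurlyeq$. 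The reverse inclusion will use $\cQ_n\subseteq\Mor(X)$ from the exercise after~\eqref{nonegcone}.

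The hard part will be the convex-geometric step in (v). Although the strict positivity $r\cdot D_n>0$ for nonzero $r\in R$ is immediate from (i), concluding $r=0$ from $\xi\cdot D_n\le 0$ alone requires exploiting the conic rigidity of $\cQ^\succcurlyeq$ to rule out cancellations between $q\cdot D_n<0$ and $r\cdot D_n>0$; one likely route is a perturbation argument, replacing $(q,r)$ by $(q+te_j,r-te_j)$ for small $t>0$ and a suitable $(-1)$-class $e_j$ appearing in $r$, and deriving a contradiction with the extremality of the chosen decomposition. Part (iv) is similarly delicate, relying on control of how $(-1)$-rays accumulate in the limit rather than on finite arithmetic, and this is where I expect extra technical effort beyond the one-line calculations that drive (i)--(iii).
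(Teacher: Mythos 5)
The arithmetic behind (i)--(iii) is correct and matches the paper's treatment, which declares these parts to be straightforward computations (delegated to~\cite{dF10}). Your identity $a=\xi\cdot D_n=1+d(\sqrt{n-1}-3)$, the completion of the square $(\xi+sD_n)^2=(a^2-1)-(s-a)^2$, and the downward parabola $(\xi-sK_n)^2=-1+2s+(9-n)s^2$ are all sound and give (i), (ii), (iii) as you claim.

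Part (iv) has a genuine gap, and in fact the key assertion you rely on is false. You claim that the accumulation rays of $(-1)$-rays together with $\kappa$ generate $\Mor(X)^\succcurlyeq$. But every accumulation ray lies in $\partial\cQ\cap\kappa^\perp$, i.e.\ has zero intersection with $K_n$, and $\kappa$ itself has $(-K_n)\cdot K_n=9-n<0$, so $\kappa\in N^1(X)^\prec$. Hence the cone generated by $\kappa$ and the limit rays lies entirely in $N^1(X)^\preccurlyeq$; it cannot equal $\Mor(X)^\succcurlyeq$, which is nonempty with strict inequality (for example the Nagata class $(\sqrt n;1^n)\in\cQ_n\subseteq\Mor(X)$ has $(\sqrt n;1^n)\cdot K_n=n-3\sqrt n>0$ when $n\ge 10$). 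So the argument as written is circular and also contradicted by explicit classes. The paper proves (iv) by an entirely different route, a \emph{specialization} argument: one shows the inclusion $\NE(X)\subseteq\kappa+R$ when the $n$ points lie very generally on an irreducible cubic $C$ (there $-K_n$ is effective and $C$ is the \emph{only} irreducible curve with nonnegative $K_n$-intersection, by an adjunction-plus-$\Pic C$ argument), and then uses semicontinuity of effectivity to transfer the inclusion to very general position, noting that $R_n$ is unchanged under this specialization because points on a cubic are Cremona general. If you want to fix (iv), you need this degeneration idea; there is no purely cone-theoretic shortcut, precisely because, for $n\ge 10$, the shape of $\Mor(X)^\succcurlyeq$ is not otherwise controlled. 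Your sketch of (v) is, as you note, incomplete; since the paper delegates (v) to~\cite{dF10} without detail, I cannot compare methods, but be careful: your convex-geometric step must confront the fact that $q\cdot D_n$ can be strictly negative, so strict positivity of $r\cdot D_n$ alone does not give $r=0$, and the perturbation idea is not yet an argument.
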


\begin{remark} As noted in \cite {dF10}, Conjecture \ref {conj:df} does not imply  that $\Mor(X)^\succcurlyeq=\cQ^\succcurlyeq$, unless $n=10$, in which case this is exactly what it says (see Theorem \ref {thm:df}(v)). Conjecture \ref {conj:df} does imply Nagata's conjecture, and
is consistent with what is known about the boundary of $\Mor(X)$,
like Theorem \ref{thm:chmr}.
\end{remark}

\begin{proof}
All statements except (iv) are computations that can be
done as exercises; \cite{dF10} contains all the details.
For (iv), de Fernex uses a specialization argument, showing
that the claim holdes when the points are very general on an
irreducible cubic curve; if there exists an effective
integral class outside $\kappa+R$ for very general
position of the points, then it also exists for points
on a cubic curve. Note that an irreducible cubic through
all points has class $(3;1^n)=-K_n$, that is fixed by
arithmetic Cremona transformations, and it follows from this
that very general choices of points on an irreducible cubic
are Cremona general (see Exercise \ref{points-cubic} below);
so $R_n$ stays the same before and after
specializing to the cubic.

For points on an irreducible cubic, the strict transform
of this cubic is the only irreducible curve
$C$ with $[C]\in \Mor(X)^\succ$. On the other hand, there
are no irreducible curves $D$ with $D\cdot K_n=D\cdot C=0$,
because such a curve would have class $(d;m_1, \dots,m_n)$
with $3d=\sum m_i$, so
\[\cO_X(D)|_C =\cO_C(dL|_C-m_1 p_1-\dots -m_n p_n)\]
would be an effective line bundle of degree 0, that by the choice
of the $n\ge 10$ points would be general in $\Pic C$, therefore
non-effective ($C$ is of genus 1), a contradiction.

Thus $\NE(X)\subseteq \kappa_n+R_n$ and the claim follows.
\end{proof}

\begin{exercise}\label{points-cubic}
Show that if $C$ is an irreducible cubic curve with multiplicity 1
at each of $n$ points $p_1, \dots, p_n$, then its transform
by a standard Cremona map based at any three of the $n$ points
is again an irreducible cubic with multiplicity 1
at each of the resulting $n$ points. Deduce that for every $n$ there
are subsets of $n$ points in $C$ that are Cremona general.
\\
\emph{Hint}: The Cremona transform of a line through 3 points
cuts on $C$ an effective divisor of degree 0.
\end{exercise}

\begin{conjecture}[$\Delta$-conjecture, \cite{CHMR13}] \label{conj:CHMR}
Let $\cQ_n$ be the nonnegative cone \eqref{nonegcone} and $\Delta_n^\preccurlyeq$ as in \eqref{Delta}. If $n\ge 10$ then
\begin{equation}\label{eq:dCHMR}
\partial\cQ_n\cap \Delta_n^\preccurlyeq\subset \Nef(X).\end{equation}
\end{conjecture}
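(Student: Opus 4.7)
Let $\xi \in \partial \cQ_n \cap \Delta_n^\preccurlyeq$, so $\xi^2 = 0$, $\xi \cdot L \ge 0$, and $\xi \cdot D_n \le 0$. To verify $\xi \in \Nef(X)$ we must check $\xi \cdot [C] \ge 0$ for every irreducible curve $C\subset X$. The key tool is the Lorentzian signature $(1,n)$ of the intersection form on $N^1(X)$: since $\cQ_n$ is a forward time-like cone, the reverse Cauchy--Schwarz inequality yields
\begin{equation*}
\alpha \cdot \beta \;\ge\; 0 \qquad \text{for every pair } \alpha, \beta \in \cQ_n.
\end{equation*}
In particular $\xi \cdot \alpha \ge 0$ for every $\alpha \in \cQ_n$, which already disposes of every irreducible curve $C$ with $C^2 \ge 0$, as then $[C] \in \cQ_n$.

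\emph{Handling $(-1)$-curves via Theorem \ref{thm:df}(i).} Let $E$ be a $(-1)$-curve. Theorem \ref{thm:df}(i) places the ray of $[E]$ inside $\cD_n = \cQ_n - \delta_n$, which we read as the Minkowski sum $\cQ_n + \co(-D_n)$. Hence there is $t \ge 0$ with $[E] + tD_n \in \cQ_n$, and
\begin{equation*}
\xi \cdot [E] \;=\; \xi \cdot \bigl([E] + tD_n\bigr) \;-\; t\,(\xi \cdot D_n) \;\ge\; 0,
\end{equation*}
since the first summand is nonnegative by the Lorentzian inequality applied to $\xi, [E]+tD_n \in \cQ_n$, and the second is nonnegative because $t \ge 0$ and $\xi \cdot D_n \le 0$ by hypothesis.

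\emph{The main obstacle.} The remaining case is an irreducible curve $C$ with $C^2 < 0$ that is not a $(-1)$-curve. Adjunction then forces $p_a(C) \ge 1$ and $C \cdot K_n \ge 0$, so $[C] \in \Mor(X)^\succcurlyeq \setminus \cQ_n$. Both de Fernex's Conjecture \ref{conj:df} (which predicts $\Mor(X)^\succcurlyeq = \cQ_n^\succcurlyeq$) and the SHGH conjecture (which forbids the existence of such ``exotic'' negative curves on $X_n$ for very general $p_1,\dots,p_n$) would rule out this case and close the argument via the computation above; unconditionally, controlling these classes is the main obstacle. A naive attempt to use Theorem \ref{thm:df}(iv), $\Mor(X) \subseteq \overline{\kappa_n + R_n}$, fails because one checks
\begin{equation*}
\xi \cdot \kappa_n \;=\; 3d - \sum_{i=1}^n m_i \;\le\; (3 - \sqrt{n-1})\,d \;\le\; 0
\end{equation*}
using $\sum_{i=1}^n m_i \ge \sqrt{n-1}\,d$ (from $\xi \cdot D_n \le 0$) and $n\ge 10$, so the anticanonical ray $\kappa_n$ lies on the wrong side of the tangent hyperplane $\xi^\perp$ and $\overline{\kappa_n + R_n}$ escapes the tangent half-space at $\xi$. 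Overcoming this will require a finer control on the structure of $\Mor(X)^\succcurlyeq$ near $\partial \cQ_n$, perhaps via a Zariski decomposition argument that forces the negative part of any putative exotic effective class to consist only of $(-1)$-curves, or by degenerating $p_1,\dots,p_n$ to positions where the Mori cone is already known.
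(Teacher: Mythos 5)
The statement you set out to prove is labelled a conjecture in the paper, and the paper provides no proof of it — Conjecture \ref{conj:CHMR} is offered precisely as an open problem (cf.\ the surrounding Conjecture \ref{conj:df}, Lemma \ref{lem:nef}, and Conjecture \ref{conj:SCHMR}). So there is no in-paper argument to compare against; the fair verdict on a blind attempt is whether it correctly identifies what is provable and where the genuine obstruction lies, and yours does.

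Your two reductions are sound. The Lorentzian signature $(1,n)$ of the intersection form gives $\xi\cdot\alpha\ge0$ for all $\xi,\alpha\in\cQ_n$, which disposes of every irreducible curve $C$ with $C^2\ge0$ (since $C\cdot L\ge 0$ automatically for an irreducible curve on a blow-up of $\bbP^2$). For $(-1)$-curves, reading $\cD_n=\cQ_n-\delta_n$ as $\cQ_n+\co(-D_n)$ is the intended meaning of the paper's cone notation, and your computation $\xi\cdot[E]=\xi\cdot([E]+tD_n)-t\,\xi\cdot D_n\ge 0$ is correct given the hypothesis $\xi\cdot D_n\le 0$. One small slip: adjunction yields $C\cdot K_n\ge 0$ whenever $C^2<0$ and $C$ is not a $(-1)$-curve, but it does not force $p_a(C)\ge1$ — a $(-2)$-curve has $p_a(C)=0$ and $K_n\cdot C=0$. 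This does not affect the conclusion you actually use. The remaining case — an irreducible curve with negative self-intersection lying in $\Mor(X)^\succcurlyeq$ — is exactly what Conjectures \ref{conj:df} and \ref{conj:SCHMR} (and SHGH) would rule out, and no unconditional argument for it is known; your observation that Theorem \ref{thm:df}(iv) is useless here because $\xi\cdot\kappa_n\le 0$ is also correct. In short, you have recovered the implications that the paper itself records only in the forward direction (Propositions \ref{prop:iml}, \ref{prop:ANS}, \ref{prop:SN10} all deduce consequences \emph{from} the $\Delta$-conjecture), and you have correctly isolated its open kernel.
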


\begin{proposition}\label{prop:iml}  If the $\Delta$-conjecture holds, then
\begin{equation}\label{eq:dCHMR2}
\Mor(X) \cap \Delta_n^\preccurlyeq=\Nef(X) \cap \Delta_n^\preccurlyeq=\cQ_n\cap \Delta_n^\preccurlyeq.
\end{equation}
\end{proposition}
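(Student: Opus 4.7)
The inclusions $\Nef(X) \cap \Delta_n^\preccurlyeq \subseteq \Mor(X) \cap \Delta_n^\preccurlyeq$ and $\Nef(X) \cap \Delta_n^\preccurlyeq \subseteq \cQ_n \cap \Delta_n^\preccurlyeq$ are immediate from the standard surface-theoretic facts $\Nef(X) \subseteq \Mor(X)$ and $\Nef(X) \subseteq \cQ_n$ (the latter using self-duality of the Lorentz cone $\cQ_n$ under the intersection form, together with $\Nef(X) = \Mor(X)^\vee$ and $\cQ_n \subseteq \Mor(X)$). The plan therefore reduces to proving two non-trivial inclusions: (a) $\cQ_n \cap \Delta_n^\preccurlyeq \subseteq \Nef(X)$, and (b) $\Mor(X) \cap \Delta_n^\preccurlyeq \subseteq \cQ_n$.

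For (a) I would bootstrap from the $\Delta$-conjecture by a convexity argument, realising every interior point as a convex combination of two boundary points of $\cQ_n \cap \Delta_n^\preccurlyeq$. Given $\xi$ in the interior of $\cQ_n$ (so $\xi^2 > 0$, $\xi\cdot L > 0$) with $\xi\cdot D_n \leq 0$, I would pick an auxiliary class $\eta$ orthogonal to both $L$ and $D_n$. Such an $\eta$ with $\eta^2 < 0$ exists because the Gram matrix of $\{L, D_n\}$ has determinant $1\cdot(-1)-(n-1)=-n<0$, so $\langle L, D_n\rangle$ has signature $(1,1)$ inside $N^1(X)$ and its $(n-1)$-dimensional orthogonal complement is negative definite. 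The equation $(\xi+t\eta)^2=0$ is then a quadratic in $t$ with product of roots $\xi^2/\eta^2<0$, hence has two real roots $t_1>0>t_2$; the corresponding classes $\xi_i:=\xi+t_i\eta$ satisfy $\xi_i^2=0$, $\xi_i\cdot L = \xi\cdot L>0$ and $\xi_i\cdot D_n = \xi\cdot D_n \leq 0$, so $\xi_i \in \partial\cQ_n \cap \Delta_n^\preccurlyeq \subseteq \Nef(X)$ by the $\Delta$-conjecture. Since $\xi$ is a convex combination of $\xi_1$ and $\xi_2$, convexity of $\Nef(X)$ gives $\xi\in\Nef(X)$; the boundary case $\xi\in\partial\cQ_n\cap\Delta_n^\preccurlyeq$ is already the hypothesis.

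For (b) the plan is to dualise (a). Since $\Mor(X)=\Nef(X)^\vee$ under the intersection pairing and $\cQ_n$ is self-dual, taking duals of $\cQ_n\cap\Delta_n^\preccurlyeq\subseteq\Nef(X)$ yields $\Mor(X) \subseteq \cQ_n - \bbR_{\geq 0}D_n$; this sum is automatically closed because if $\eta_k - t_k D_n$ converges with $\eta_k\in\cQ_n, t_k\geq 0$, the sequence $t_k$ must be bounded, since otherwise $\eta_k/t_k\to D_n\in\cQ_n$ would contradict $D_n^2=-1$. So any $\xi\in\Mor(X)\cap\Delta_n^\preccurlyeq$ admits a representation $\xi=\eta-tD_n$ with $\eta\in\cQ_n$, $t\geq 0$. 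Then $\xi\cdot D_n\leq 0$ forces $\eta\cdot D_n\leq -t$, so
\[
\xi^2 = \eta^2 - 2t(\eta\cdot D_n) - t^2 \geq 0 - 2t(-t) - t^2 = t^2 \geq 0.
\]
Combined with $\xi\cdot L\geq 0$ (automatic since $L$ is nef and $\xi\in\Nef(X)^\vee$), this yields $\xi\in\cQ_n$, completing (b).

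The main obstacle is step (a): the $\Delta$-conjecture asserts nefness only on the quadric boundary $\partial\cQ_n\cap\Delta_n^\preccurlyeq$, and the whole proposition hinges on propagating that conclusion into the interior of $\cQ_n\cap\Delta_n^\preccurlyeq$ via the Hodge-index signature computation on $\langle L, D_n\rangle^\perp$. Once (a) is secured, step (b) is a routine Lorentz-cone duality argument.
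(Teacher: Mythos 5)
Your proof is correct and follows the same route as the paper: convexity of $\Nef(X)$ to propagate the $\Delta$-conjecture from $\partial\cQ_n\cap\Delta_n^\preccurlyeq$ into the interior, the trivial inclusion $\Nef(X)\subseteq\Mor(X)$, and Mori--Nef duality for the remaining equality. What you have written is an explicit fleshing-out (Hodge-index signature computation on $\langle L,D_n\rangle^\perp$ for the convexity step, and the closure/Lorentz-cone computation for the duality step) of the paper's terse three-line argument.
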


\begin{proof}   By \eqref {eq:dCHMR} and  by
convexity of $\Nef(X)$ one has
$$\cQ_n\cap \Delta_n^\preccurlyeq\subseteq \Nef(X) \cap \Delta_n^\preccurlyeq.$$
Moreover  $\Nef(X) \cap \Delta_n^\preccurlyeq\subseteq \Mor(X) \cap \Delta_n^\preccurlyeq$. Finally \eqref {eq:dCHMR} implies  \eqref {eqdf2}   because $\Mor(X)$ is dual to $\Nef(X)$.
\end{proof}

The following proposition shows that Nagata-type conjectures we are discussing here can be interpreted as asymptotic forms of the SHGH conjecture.

\begin{proposition}\label{prop:ANS} Let $n\ge 10$.
If the $\Delta$-conjecture holds, then all  classes  in $\cQ_n\cap \Delta_n^\preccurlyeq- \partial\cQ_n\cap \Delta_n^\preccurlyeq$ are ample and therefore, if integral, there is an integer $y$ such that for all nonnegative integers $x\ge y$ the dimension of $(xd;xm_{1},\ldots, xm_{n})$
is given by \eqref{nonspecial}.
\end{proposition}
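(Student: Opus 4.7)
The plan is to deduce from the $\Delta$-conjecture, via Proposition \ref{prop:iml}, that every $\xi\in\cQ_n\cap\Delta_n^\preccurlyeq$ with $\xi\notin\partial\cQ_n$ is actually ample; the dimension formula then falls out of Kodaira vanishing and Riemann--Roch. Proposition \ref{prop:iml} gives $\cQ_n\cap\Delta_n^\preccurlyeq\subseteq\Nef(X)$, so $\xi$ is nef, and $\xi\notin\partial\cQ_n$ forces $\xi\cdot L>0$ and $\xi^2>0$. I will upgrade nefness to ampleness via Nakai--Moishezon, checking $\xi\cdot[C]>0$ for every irreducible curve $C$.

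For $C=E_i$ an exceptional divisor, writing $\xi=(d;m_1,\dots,m_n)$ one has $\xi\cdot[E_i]=m_i$, and nefness gives $m_i\geq 0$. If $m_i=0$ for some $i$, Cauchy--Schwarz on the remaining $n-1$ indices yields $(\sum_j m_j)^2\leq (n-1)\sum_j m_j^2$; combined with the inequality $\sum m_j\geq d\sqrt{n-1}$ coming from $\xi\cdot D_n\leq 0$, this forces $\sum m_j^2\geq d^2$, contradicting $\xi^2>0$. Hence every $m_i>0$, and $\xi\cdot[E_i]>0$.

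For any other irreducible curve $C$, suppose toward a contradiction that $\xi\cdot[C]=0$. The Hodge index theorem (using $\xi^2>0$) forces $[C]^2<0$, so $[C]\notin\cQ_n$, and Proposition \ref{prop:iml} in turn forces $[C]\cdot D_n>0$. Perturb to $\xi'=\xi+tD_n+s[C]$ with small positive reals $t,s$: one checks that $(t,s)$ can be chosen so that $\xi'$ sits in the strict interior of $\cQ_n\cap\Delta_n^\preccurlyeq$ (hence is nef by the $\Delta$-conjecture) while $\xi'\cdot[C]<0$, a contradiction. In the boundary case $\xi\cdot D_n=0$, feasibility reduces to the inequality $(D_n\cdot[C])^2<-[C]^2$, i.e.\ to the plane $\langle D_n,[C]\rangle$ being negative definite; this is forced by the Lorentzian signature $(1,n)$ of $N^1(X)$, since $\xi$ then lies in the orthogonal complement of this plane with $\xi^2>0$. (When $\xi\cdot D_n<0$ strictly, the cruder $\xi'=\xi+t[C]$ alone suffices.) Thus $\xi$ is ample.

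For the dimension statement, suppose $\xi$ is integral. Since $\xi$ is ample and the ample cone is open and convex, there is an integer $y$ such that $x\xi-K_n$ is ample for every $x\geq y$. Kodaira vanishing then yields $h^i(X,\cO_X(x\xi))=0$ for $i\geq 1$, and Riemann--Roch on the rational surface $X$ (where $\chi(\cO_X)=1$) gives
\[
\dim|x\xi|\;=\;\chi(\cO_X(x\xi))-1\;=\;\frac{x\xi\cdot(x\xi-K_n)}{2}.
\]
Expanding in the coordinates $\xi=(d;m_1,\dots,m_n)$ and $K_n=(-3;-1^n)$ recovers the polynomial on the right of \eqref{nonspecial}, which is positive for $x\gg 0$ so that the $\max$ with $-1$ is realised by it. The main technical obstacle is the perturbation/signature step in the ampleness proof: four strict inequalities must hold simultaneously for $\xi'$, and it is the Lorentzian signature of $N^1(X)$ that guarantees enough room inside $\cQ_n\cap\Delta_n^\preccurlyeq$ to arrange them.
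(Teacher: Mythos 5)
Your proof is correct, and it takes a genuinely different route from the paper's. The paper's proof is a single sentence: it quotes Proposition~\ref{prop:iml} and Kleiman's theorem (ample = interior of nef). That argument is immediate when $\xi\cdot D_n<0$ strictly, since then $\xi$ lies in the interior of $\cQ_n\cap\Delta_n^\preccurlyeq\subseteq\Nef(X)$. For the boundary case $\xi\cdot D_n=0$, however, the paper's sentence is relying on an unstated extra fact: that $\Nef(X)$ already extends past the hyperplane $D_n^\perp$ (for instance because $L\in\Nef(X)$ and $L\cdot D_n>0$), so that the interior of $\cQ_n$ meets the interior of the convex cone $\co\big((\cQ_n\cap\Delta_n^\preccurlyeq)\cup\{L\}\big)\subseteq\Nef(X)$ even along $D_n^\perp$. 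You sidestep this entirely by running Nakai--Moishezon directly: the Cauchy--Schwarz estimate shows $\xi\cdot E_i>0$, and for any other curve $C$ with $\xi\cdot C=0$ the Hodge index theorem forces $C^2<0$ and $D_n\cdot C>0$, after which the perturbation $\xi'=\xi+tD_n+s[C]$ contradicts the $\Delta$-conjecture; the feasibility of the perturbation in the case $\xi\cdot D_n=0$ is exactly the negative definiteness of $\langle D_n,[C]\rangle$, which follows from the signature $(1,n)$ since $\xi$ with $\xi^2>0$ lies in its orthogonal complement. Your route is more elementary in the sense that it avoids reasoning about the global shape of $\Nef(X)$ beyond $\Delta_n^\preccurlyeq$, at the price of a case analysis; the paper's route is slicker but implicitly invokes the auxiliary nef class $L$ to get openness. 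The final dimension computation (Kodaira vanishing plus Riemann--Roch on the rational surface $X$, giving $\dim|x\xi|=\frac{x\xi\cdot(x\xi-K_n)}{2}$ for $x\gg0$, which matches \eqref{nonspecial}) is standard and is what the paper's ``therefore'' leaves to the reader; you spell it out correctly.
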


\begin{proof} It follows from Proposition \ref {prop:iml} and  the fact that the ample cone is the interior of the nef cone (by Kleiman's theorem, see \cite {Kle66}).
\end{proof}

One can give a stronger form of the $\Delta$-conjecture.

\begin{lemma}\label{lem:nef} Any rational, non-effective ray in $\partial\cQ_n$ is nef and it is extremal for both $\Mor(X)$ and $\Nef(X)$. Moreover it lies in $\partial\cQ_n^\succcurlyeq$.
\end{lemma}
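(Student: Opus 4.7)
The plan is to establish the four assertions — $\xi$ is nef, $\xi \in \partial\cQ_n^{\succcurlyeq}$, $\xi$ is extremal in $\Nef(X)$, $\xi$ is extremal in $\Mor(X)$ — by combining the Lorentzian structure of the intersection form on $N^1(X)$ (which has signature $(1,n)$), Riemann--Roch on $X$, and Zariski decomposition of pseudo-effective divisors. Fix an integral representative $\xi$ of the ray in question, so that $\xi^2 = 0$ and $d := \xi \cdot L > 0$; non-effectiveness reads $h^0(X, m\xi) = 0$ for every integer $m \ge 1$.

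For nefness, I would argue by contradiction: if $\xi \cdot C < 0$ for some irreducible curve $C$, then $C^2 \ge 0$ is ruled out by the Lorentzian fact that two elements of $\cQ_n$ pair nonnegatively. Hence $C^2 < 0$, and Exercise \ref{ex:extremalQ} applied with $\eta = [C]$ gives $\xi \in \NE(X)$. Carathéodory's theorem exhibits $\xi$ as a nonnegative real combination of finitely many irreducible curve classes $[C_1], \dots, [C_r]$; since $\xi$ and the $[C_j]$ are all rational, the set of such coefficients is a nonempty rational polyhedron and contains a rational point. Clearing denominators shows some positive integer multiple of $\xi$ is effective, contradicting the hypothesis.

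For $\xi \in \partial\cQ_n^{\succcurlyeq}$, Riemann--Roch gives $\chi(m\xi) = 1 - \tfrac{m}{2}\,\xi \cdot K_n$ since $\xi^2 = 0$, and for $m$ large, $h^2(m\xi) = h^0(K_n - m\xi) = 0$ because $(K_n - m\xi) \cdot L < 0$. If $\xi \cdot K_n < 0$ then $h^0(m\xi) \ge \chi(m\xi) \to \infty$, contradicting non-effectiveness; hence $\xi \cdot K_n \ge 0$. Extremality in $\Nef(X)$ is then formal: the cross section $\{\eta \cdot L = 1\} \cap \cQ_n$ is the closed unit ball in the negative-definite orthogonal complement of $L$, so $\cQ_n$ is strictly convex and every ray on $\partial\cQ_n$ is extremal in $\cQ_n$. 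Since $\Nef(X) \subseteq \cQ_n$ (nef classes on a smooth projective surface have nonnegative self-intersection, and pair nonnegatively with the effective class $L$), and $\xi \in \Nef(X)$ by the first step, extremality descends to $\Nef(X)$.

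The main obstacle is extremality in $\Mor(X)$, since $\Mor(X)$ typically protrudes out of $\cQ_n$ along rays of curves with negative self-intersection, so strict convexity alone no longer suffices. Suppose $\xi = \eta_1 + \eta_2$ with $\eta_i \in \Mor(X) = \overline{\Eff}(X)$, and Zariski-decompose $\eta_i = P_i + N_i$ with $P_i$ nef and $N_i$ an effective $\bbR$-divisor. Nefness of $\xi$ gives $\xi \cdot \eta_i \ge 0$; these sum to $\xi^2 = 0$, and nonnegativity of each summand in $\xi \cdot P_i + \xi \cdot N_i$ forces $\xi \cdot P_i = \xi \cdot N_i = 0$. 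The Lorentzian argument applied to the nef (hence in $\cQ_n$) class $P_i$ gives $P_i = \lambda_i \xi$ with $\lambda_i \ge 0$, so $N_1 + N_2 = s\xi$ with $s = 1 - \lambda_1 - \lambda_2$. The case $s < 0$ is impossible because $s\xi \cdot L = sd < 0$ while an effective $\bbR$-divisor pairs nonnegatively with $L$; the case $s > 0$ is impossible by the rational-polyhedron argument of the first step, which would produce an effective integer multiple of $\xi$. Hence $s = 0$, both $N_i$ vanish, and $\eta_i = \lambda_i \xi \in \co(\xi)$, as required.
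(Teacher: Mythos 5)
Your proof is correct, but it follows a genuinely different route from the paper's. The paper's proof is terse: it delegates nefness and extremality in $\Mor(X)$ to Exercise~\ref{exercise-extremal} (whose hint is Mori-theoretic, invoking negative irreducible curves orthogonal to $\xi$), handles extremality in $\Nef(X)$ by appealing to duality between $\Mor(X)$ and $\Nef(X)$ (implicitly: $\cQ_n\subseteq\Mor(X)$ and self-duality of the positivity cone give $\Nef(X)\subseteq\cQ_n$, whose boundary rays are extremal by strict convexity), and derives $\xi\cdot K_n\ge 0$ directly from Mori's cone theorem~\ref{mori} --- a $K$-negative extremal ray would lie in $R$, hence be a $(-1)$-ray, hence effective, contradicting the hypothesis. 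You replace each of these with more classical, self-contained tools. For the nefness step you invoke Exercise~\ref{ex:extremalQ} as the paper would, but you make explicit a step the paper leaves implicit: you rule out $\xi\in\NE(X)$ by the rational-polyhedron observation that a rational class expressible as a nonnegative real combination of rational curve classes is also so expressible with rational coefficients, hence has an effective integer multiple. For $\xi\cdot K_n\ge 0$ you use Riemann--Roch plus vanishing of $h^2$ rather than Mori's theorem. For extremality in $\Mor(X)$, the most interesting divergence, you use Zariski decomposition: the orthogonality $\xi\cdot\eta_i=0$ is propagated through $\eta_i=P_i+N_i$, the Lorentzian Cauchy--Schwarz equality forces $P_i=\lambda_i\xi$, and the residual $N_1+N_2=s\xi$ is dispatched by sign considerations and the same rationality argument. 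Your route trades Mori's cone theorem (a deep structural result specialized to this setting) for Riemann--Roch and Zariski decomposition, buying self-containment and surfacing the subtlety of the effective-multiple step; the paper's route is shorter because it can lean on the heavier machinery already set up.
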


\begin{proof}
That such a ray is nef and extremal for $\Mor(X)$ was proved
in Exercise \ref{exercise-extremal}. The duality between $\Mor(X)$ and $\Nef(X)$
shows that it is also extremal in $\Nef(X)$.
The final assertion  follows by Mori's cone theorem.
\end{proof}

A rational, non-effective ray in $\partial\cQ_n$  will be called a \emph{good ray}. An  irrational, nef ray in $\partial\cQ_n$  will be called a \emph{wonderful ray}. No wonderful ray has been detected so far.

The following conjecture  implies the $\Delta$-conjecture.

\begin{conjecture}[Strong $\Delta$-conjecture] \label{conj:SCHMR} If $n>10$, all rational rays in $\partial\cQ_n\cap \Delta_n^\preccurlyeq$  are non-effective.  If $n=10$, a rational ray in  $\cQ_{10}\cap \Delta_{10}^\preccurlyeq=\cQ_{10}^\succcurlyeq$ is non--effective, unless it is generated by a Cremona
transform of the curve with class  $(3;1^9, 0)$.
\end{conjecture}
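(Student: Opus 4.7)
Since the Strong $\Delta$-conjecture is open, my plan is a program rather than a proof, aiming to extend the Ciliberto--Miranda degeneration technology of Section 1.3 (as refined in Theorem \ref{thm:chmr}) to arbitrary rational classes on $\partial\cQ_n \cap \Delta_n^\preccurlyeq$, modulo Cremona equivalence, and then to treat the $n=10$ case by an extra ad hoc analysis.

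First (reduction), any rational ray in $\partial\cQ_n \cap \Delta_n^\preccurlyeq$ is generated by a primitive integral class $\xi = (d; m_1, \dots, m_n)$ with $d>0$, $d^2 = \sum m_i^2$, and $\sum m_i \geq d\sqrt{n-1}$; non-effectivity of the ray amounts to $[I(mZ_{\bfm})]_d = 0$ for every $m\geq 1$ via the identification \eqref{coxrees}, where $\bfm=(m_1,\dots,m_n)$. Using arithmetic Cremona transformations, which preserve $\cQ_n$, $\Delta_n^\preccurlyeq$ and effectivity, I would reduce to representatives in which no three largest multiplicities sum to more than $d$, i.e., Hudson-reduced in the spirit of Theorem \ref{hudson-test}. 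This cuts the set of orbit representatives that need to be ruled out considerably.

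Second (degeneration with induction on $n$), for each such reduced class I would run a CHMR-style degeneration $Y\to\mathbb{D}$ as in \eqref{degen}, choosing a splitting $n=a+b-1$ so that both components $C_\bbP\subset\bbP$ and $C_\bbF\subset\bbF$ of a hypothetical central curve \eqref{centralcurve} fall under the inductive hypothesis --- the base cases $n=10,11,12$ being provided by Theorem \ref{thm:chmr} and the arguments of Lemmas \ref{CM-induction}--\ref{CM-base} --- and then argue that the matching condition on $E$ fails for general position of the split points. For the $n=10$ exception, where $D_{10}=-K_{10}$ and $\cQ_{10}\cap\Delta_{10}^\preccurlyeq=\cQ_{10}^\succcurlyeq$, I would use the refined degeneration obtained by blowing up a suitable rational curve in the central fiber (as sketched in \cite{CHMR13}), and combine it with Hudson's test to show that the Cremona orbit of $(3;1^9,0)$ is the unique family of effective rational rays in $\cQ_{10}^\succcurlyeq$, all other rational classes being eliminated by the analogous matching failure.

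The main obstacle is glaringly clear: achieving such a degeneration uniformly across all Hudson-reduced orbit representatives. The CHMR technique works case by case for specific multiplicity vectors, and constructing an appropriate splitting $(a,b)$ together with a matching failure on $E$ for a general boundary class requires adapting the central fiber (possibly via iterated blowups along rational curves, as exploited for the $n=10$ base case of Theorem \ref{thm:chmr}) to the discrete invariants of $\xi$. Even granting the inductive framework of Exercise \ref{CHMR-induction}, no single degeneration covers the density of rational boundary rays needed, and this is precisely the reason the $\Delta$-conjecture and its strong form have remained open. A genuinely new idea --- perhaps a flexible family of degenerations parametrized by the rational points of $\partial\cQ_n$, or a valuation-theoretic semicontinuity argument on the Zariski--Riemann space of the kind hinted at in the paper's later sections --- seems indispensable to close the gap.
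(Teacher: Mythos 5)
This statement is a \emph{conjecture} in the paper; the authors give no proof, and you correctly recognize that no proof currently exists. What you have written is an honest and reasonably well-informed research program rather than a proof, and your own closing paragraph accurately diagnoses why it stops short of one: the Ciliberto--Miranda/CHMR degeneration machinery, even augmented with Cremona reduction via Theorem \ref{hudson-test}, handles specific multiplicity vectors one at a time, and there is at present no way to organize a single (or even a finite family of) degenerations that covers the Zariski-dense set of rational rays on $\partial\cQ_n\cap\Delta_n^\preccurlyeq$. So there is no divergence from ``the paper's proof'' to report --- the paper simply states the conjecture as open, and your program is consistent with how the paper motivates it (via Theorem \ref{thm:chmr}, Exercise \ref{CHMR-induction}, and the discussion around Example \ref{ex:seq}).

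One small but substantive slip you should fix: non-effectivity of the ray $\co(\xi)$ with $\xi=(d;m_1,\dots,m_n)$ means that $m\xi=(md;mm_1,\dots,mm_n)$ is non-effective for every $m\ge 1$, i.e.\ $[I(mZ_\bfm)]_{md}=0$ for all $m\ge 1$, not $[I(mZ_\bfm)]_{d}=0$; the degree must scale along with the multiplicities, exactly as in the Waldschmidt-constant formulation of Theorem \ref{thm:chmr}. Also worth emphasizing for the $n=10$ case: the uniqueness of the Cremona orbit of $(3;1^9,0)$ among effective rational rays in $\cQ_{10}^\succcurlyeq$ is equivalent, by Proposition \ref{prop:SN10}, to the ``Strong Nagata conjecture'' that the only positive-genus irreducible curves of nonpositive self-intersection are Cremona transforms of the anticanonical cubic, and this too is open; your proposal implicitly assumes it can be established by the refined degeneration of \cite{CHMR13}, but that degeneration only yields the $\alpha$-bound for the specific vector $(5,4^9)$, not the full classification of negative curves.
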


\begin{proposition} \label{prop:SN10} For $n=10$,
the strong $\Delta$-conjecture
is equivalent to the following statement (``Strong Nagata conjecture''):
If $C$ is an irreducible curve of genus $g>0$ on $X$, then $C^2>0$ unless
$n\ge 9$, $g=1$ and $C$ is a Cremona transform of the curve with class
$(3;1^9, 0^{n-9})$, in which case $C^2=0$.
\end{proposition}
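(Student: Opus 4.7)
The plan is to treat the two implications separately. Direction $(\Rightarrow)$ follows quickly from the previous results in the excerpt, while $(\Leftarrow)$ reduces to a Hodge-index analysis of effective divisors representing isotropic classes.

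For $(\Rightarrow)$, I first observe that strong $\Delta$ implies the $\Delta$-conjecture: by Lemma \ref{lem:nef} non-effective rational rays on $\partial\cQ_{10}$ are automatically nef, and each Cremona transform of $(3;1^9,0)$ is represented by an irreducible curve of self-intersection zero and hence also nef. Proposition \ref{prop:iml} then yields $\Mor(X_{10})\cap\Delta_{10}^{\preccurlyeq}=\cQ_{10}^{\succcurlyeq}$. Now let $C\subset X_{10}$ be irreducible with $g\ge 1$. Adjunction shows that $C^2<0$ would force $K\cdot C>0$, whence $[C]\in\Mor(X)^{\succ}\subseteq\cQ_{10}^{\succcurlyeq}\subseteq\cQ$, contradicting $C^2<0$. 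If instead $C^2=0$, then $[C]$ is an effective rational ray on $\partial\cQ_{10}\cap\Delta_{10}^{\preccurlyeq}$, so strong $\Delta$ forces it to be generated by a Cremona transform of $(3;1^9,0)$; invariance of $K\cdot\xi$ under arithmetic Cremona transformations combined with adjunction then gives $g=1$.

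For $(\Leftarrow)$, let $\xi$ be a rational effective ray in $\partial\cQ_{10}\cap\Delta_{10}^{\preccurlyeq}$ and choose $m$ so that $m\xi$ is an integral class represented by an effective divisor $D=\sum_i a_i[C_i]$ with $a_i\in\bbZ_{>0}$ and distinct irreducible $C_i$. The Zariski decomposition $D=P+N$ satisfies $P^2+N^2=0$ with $P^2\ge 0$ and $N^2\le 0$ (strict unless $N=0$), so $N=0$ and $D=m\xi$ is nef. Consequently $m\xi\cdot C_i=0$ for every $i$, so
\[
0=a_i\,C_i^2+\sum_{j\ne i}a_j\,C_j\cdot C_i,
\]
and the terms $C_j\cdot C_i\ge 0$ for distinct irreducibles control the decomposition. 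For a component with $C_i^2\ge 0$ this forces $C_i^2=0$ and $C_i\cdot C_j=0$ for every $j\ne i$; two isotropic classes in the forward light cone with vanishing mutual intersection are proportional, so all such components span a single ray $\co(\eta)$, and adjunction ($K\cdot\eta\ge 0$) together with the strong Nagata conjecture identify $\eta$ as a Cremona transform of $(3;1^9,0)$.

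The main obstacle is to rule out components with $C_i^2<0$. Strong Nagata forces $g(C_i)=0$, so each such $C_i$ is a rational negative curve. For $10$ very general points on $\bbP^2$, a dimension count on the corresponding Severi varieties shows that any $(-k)$-curve class with $k\ge 2$ has negative expected dimension and so does not exist generically; hence every negative component is a $(-1)$-curve with $K\cdot C_i=-1$. Since the positive-square components all span $\co(\eta)$ with $K\cdot\eta=0$, the total contribution of negative components to $m\xi\cdot K$ is $-\sum_{C_i\;(-1)\text{-curve}} a_i<0$, incompatible with $\xi\cdot K\ge 0$ unless no such component appears. Therefore $D=A\eta$ for some $A>0$, and $\xi$ is generated by a Cremona transform of $(3;1^9,0)$.
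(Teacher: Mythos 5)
The $(\Rightarrow)$ direction is sound and essentially coincides with the paper's (one-line) treatment. The problem is in the $(\Leftarrow)$ direction, specifically in the Zariski-decomposition step.

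You claim that $D^2 = P^2 + N^2 = 0$, together with $P^2 \ge 0$ and $N^2 \le 0$ (strict unless $N=0$), forces $N=0$. This inference is not valid: the equality $P^2+N^2=0$ is perfectly consistent with $P^2 = -N^2 > 0$ and $N \ne 0$. Indeed, on any blown-up plane one can have $D=H+E$ with $H$ nef and big, $E$ an irreducible negative curve with $H\cdot E=0$ and $H^2=-E^2$; then $D^2=0$, yet the Zariski decomposition is $P=H$, $N=E\ne 0$. So ``$D$ lies on $\partial\cQ$'' does not by itself give nefness of $D$. Since everything downstream (``$m\xi\cdot C_i=0$ for every $i$'', the Hodge-index treatment of the isotropic components) hinges on $D$ being nef, the argument breaks at this point. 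Note also that Lemma~\ref{lem:nef} cannot be invoked as a substitute: it asserts nefness only for \emph{non-effective} rational rays in $\partial\cQ_n$, and here the ray is effective by hypothesis.

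For comparison, the paper's own proof of this direction avoids Zariski decomposition entirely: it writes an effective $C$ in the ray as $C=n_1C_1+\cdots+n_hC_h$ with distinct irreducible $C_i$, asserts $C_i\cdot C_j\ge 0$ for all $1\le i\le j\le h$, and concludes from $C^2=0$ that all pairwise products vanish, hence $h=1$. (The nonnegativity of the diagonal terms $C_i^2$ is the step your Zariski argument was trying to supply; the paper leaves it compressed. Ruling out negative-square components — which for very general points would have to be $(-1)$-curves — and using $C\cdot K\ge 0$ against their $K$-degree, in the spirit of your last paragraph, is the kind of supplementary argument that would close the gap, but it must be done without first assuming $D$ nef.) As it stands, your proposal does not establish nefness, so the $(\Leftarrow)$ direction is incomplete.
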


\begin{proof}
If  the strong $\Delta$-conjecture holds, then clearly the Strong Nagata conjecture
holds. Conversely, consider a
rational effective ray in   $\partial\cQ_{10}^\succcurlyeq$ and let $C$ be an effective divisor
in the ray. Then $C=n_1C_1+\cdots +n_hC_h$, with $C_1,\ldots, C_h$ distinct irreducible curves
and $n_1,\ldots, n_h$ positive integers. One has  $C_i\cdot C_j\ge 0$, hence
$C_i\cdot C_j=0$ for all $1\le i\le j\le h$. This  clearly implies $h=1$, hence the assertion.  \end{proof}

By the proof of Proposition \ref{prop:iml}, any good ray  gives
a constraint on $\Mor(X)$, so it is useful to find  good rays.  Even better would be to find
wonderful rays.

\begin{example}\label{ex:seq}  Consider the family of linear systems
\[ \cB=\{ B_{q,p}:= (9q^2+p^2; 9q^2-p^2, (2qp)^9):   (q,p)\in \bbN^2, q\le p \}\]
generating rays in $\partial\cQ_{10}^\succcurlyeq$. Take a sequence $\{(q_n,p_n)\}_{n\in \bbN}$ such that $\lim_n \frac {p_n+q_n}{p_n}=\sqrt {10}$. For instance take $\frac {p_n+q_n}{p_n}$ to be the convergents of the periodic continued fraction expansion of $\sqrt {10}=[3; \overline{6}]$, so that 
\[ p_1=2, \; p_2=13, \; p_3=80,\ldots \;\; q_1=1, \; q_2=6, \; q_3=37, \ldots. \]
The sequence of rays $\{[B_{q_n,p_n}]\}_{n\in \bbN}$ converges to the Nagata ray $\nu_{10}$. If we knew that the rays of this sequence are good, this would imply
Nagata's conjecture for $n=10$.
\end{example}

\subsection{When does finite generation hold?}\label{polyhedral}

We have seen that the blow-up
$f: X=X_n\to \bbP^2$ of the plane at very general points $p_1,\ldots, p_n$
has finitely generated (i.e., polyhedral) Mori cone $\Mor(X)$ if and only
if $n\le 8$. Although the main focus of these notes is on Nagata type rays,
i.e., on non finitely generated cases, it should be mentioned that
characterizing the sets of points $p_1,\dots,p_n\in\bbP^2$ such that the Mori cone
(respectively, the effective semigroup $\Eff X$, the Cox ring $\mathcal{TC}(X)$)
of the blow-up is finitely generated, and studying these particular surfaces,
is an important and active area of research.
With no attempt at being comprehensive, we now review
a few results in this area.
In this section we drop the assumption that the points $p_i$ are general.

On any blowup of $n\le 9$ points, the anticanonical divisor $-K=(3;1^n)$
is effective. This puts great restrictions on curves $C$ with negative
selfintersection; namely, by adjunction we have that the genus $g$
of such a curve satisfies
\[
C^2+KC =2g-2 \ge -2,
\]
so the inequality $(-K)\cdot C\ge 0$ (which holds unless $C$ is a
fixed component of $|-K|$) implies that $C^2\ge -2$ and every
curve with negative selfintersection is rational. Observe that this
will continue to hold for $n\ge 10$ points, as long as the anticanonical
divisor is effective, i.e., the points lie on a (possibly reducible)
cubic curve.
Note that this is essentially the same idea used in the proof
of Theorem \ref{thm:df}, and it will be thoroughly exploited
in the third section.

If $n\le 8$, or more generally, if $\dim |-K|>0$,
every curve $C$ that is not a fixed component of $|-K|$ must
have $(-K)\cdot C>0$. In this case the only curves with negative
selfintersection are the fixed components of $|-K|$ and the
$(-1)$-curves.

Using these facts,
it is not hard to prove the following:

\begin{proposition}
The blow-up $X$ of $\bbP^2$ at an arbitrary set of $n\le 8$ points
or at a set of $n\ge 9$ points lying on a conic has
finitely generated $\Eff X$ and $\Mor(X)$.
\end{proposition}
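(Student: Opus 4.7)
The plan is to reduce polyhedrality of $\Mor(X)$ to finiteness of irreducible curves of negative self-intersection, using the analysis from the paragraph preceding the proposition, and then deduce finite generation of $\Eff X$ via Gordan's lemma applied to a rational polyhedral cone.

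First I would check that $\dim|-K|>0$ in both cases. For $n\le 8$, Serre duality gives $h^2(-K)=h^0(2K)=0$ (as $-K$ is already effective, so $2K$ is not), and Riemann--Roch then yields $h^0(-K)\ge 1+(-K)^2=10-n\ge 2$. For $n\ge 9$ points on an irreducible conic $Q$ of class $2L$, the strict transform $\tilde Q = 2L-\sum E_i$ satisfies $-K = \tilde Q + L$, so $h^0(-K)\ge h^0(L)=3$, and $\tilde Q$ is a fixed component of $|-K|$.

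By the discussion preceding the proposition, every irreducible $C$ with $C^2<0$ is either a fixed component of $|-K|$ (finitely many, as components of a specific divisor) or satisfies $(-K)\cdot C>0$, whence adjunction forces $C^2=-1$ and $C$ is a $(-1)$-curve. I would then show finiteness of $(-1)$-classes in both cases. For $n\le 8$, a $(-1)$-class $(d;m_1,\ldots,m_n)$ satisfies $\sum m_i=3d-1$ and $\sum m_i^2=d^2+1$; Cauchy--Schwarz gives $(3d-1)^2\le n(d^2+1)$, which for $n\le 8$ bounds $d$ explicitly (e.g.\ $d\le 7$ when $n=8$), so the set of such classes is finite. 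For $n\ge 9$ on a conic, direct enumeration via $-K=\tilde Q + L$ suffices: a $(-1)$-curve $C\ne E_i,\tilde Q$ satisfies $L\cdot C+\tilde Q\cdot C=(-K)\cdot C=1$ with both summands nonnegative; $L\cdot C=0$ would force $C$ to be a component of the exceptional locus, so $L\cdot C=1$ and $\tilde Q\cdot C=0$, whence $C$ is the strict transform of the line joining two of the $p_k$, i.e.\ $C=L-E_i-E_j$.

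To conclude, I would apply Mori's cone theorem (Theorem \ref{mori}): $\Mor(X)=\Mor(X)^{\succcurlyeq}+R_n$, with $R_n$ generated by the finitely many $(-1)$-curves. For $n\le 8$, bigness of $-K$ (from $(-K)^2=9-n>0$) forces any extremal ray of $\Mor(X)^{\succcurlyeq}$ to be spanned by an irreducible curve $C$ with $(-K)\cdot C\le 0$, namely a $(-2)$-curve (bounded again by a Cauchy--Schwarz argument inside the negative-definite lattice $K_n^\perp$) or a fixed component of $|-K|$; Zariski decomposition then writes any class in $\Mor(X)^{\succcurlyeq}$ as a nonnegative combination of these finitely many negative-curve classes. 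For $n\ge 9$ on a conic, the $K$-non-negative part of $\Mor(X)$ is spanned by $\tilde Q$ (together with $R_n$). Hence $\Mor(X)$ is rational polyhedral and generated by finitely many integer effective classes, and Gordan's lemma yields finite generation of $\Eff X$ as a semigroup in $\Pic X$.

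The main obstacle is the last step, namely rigorously controlling $\Mor(X)^{\succcurlyeq}$. For $n\le 8$ this hinges on the classical fact that $K_n^\perp$ is a negative-definite root lattice (of type $E_n$ or its restriction for small $n$), so that all $K$-non-negative extremal rays come from the finitely many known negative-curve classes. For $n\ge 9$ on a conic the explicit decomposition $-K=\tilde Q + L$ bypasses this abstract input: every effective class is constrained by its intersections with the nef class $L$ and with $\tilde Q$, so the only $K$-positive contribution beyond the $(-1)$-curves comes from multiples of $\tilde Q$.
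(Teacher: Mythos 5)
Your proof follows the route laid out in the paragraph preceding the statement in the paper: show $\dim|-K|>0$, deduce that the only irreducible curves of negative self-intersection are $(-1)$-curves, $(-2)$-curves and fixed components of $|-K|$, enumerate them, and combine Mori's cone theorem with Zariski decomposition. The Riemann--Roch computation for $n\le 8$, the Cauchy--Schwarz bound on $(-1)$-classes, and the explicit list $E_i$, $\tilde Q$, $L-E_i-E_j$ for an irreducible conic are all correct, and your inclusion of $(-2)$-curves (which the paper's heuristic sentence omits) is a genuine improvement.

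The gap is in the final step. You write that ``Zariski decomposition then writes any class in $\Mor(X)^{\succcurlyeq}$ as a nonnegative combination of these finitely many negative-curve classes,'' but Zariski decomposition gives $\xi=P+N$ with $P$ \emph{nef}, not zero, and $K\cdot\xi\ge 0$ does not force $P=0$. Indeed, if $-K$ has nontrivial Zariski negative part $M$ (i.e.\ $-K$ is not nef), one can produce $\xi=M'+P'\in\Mor^{\succcurlyeq}$ with $M'$ supported on fixed components of $|-K|$ and $P'$ a nonzero nef class, because the positive contribution $K\cdot M'=-M'^2>0$ can absorb a small $K\cdot P'<0$. So the assertion reduces to the further claim that every nef class, or at least every $P$ that occurs this way, lies in the cone spanned by the negative curve classes; equivalently, that $\cQ_n\subset\co(\text{negative curves})$. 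Since $\Mor(X)=\cQ_n+\co(\text{negative curves})$ always holds (by Zariski decomposition and by $\cQ_n\subset\Mor(X)$), this inclusion \emph{is} the content of polyhedrality — it is what needs proving, not something that follows from the Cone Theorem together with finiteness of negative curves. The same issue is present, but sharper, in the conic case: there $-K$ is no longer big (its self-intersection is $9-n\le 0$), so even the implication ``$P$ nef nonzero $\Rightarrow (-K)\cdot P>0$'' requires the decomposition $-K=\tilde Q+L$ rather than bigness, and the claim that $\Mor(X)^{\succcurlyeq}$ is ``spanned by $\tilde Q$'' is literally false ($\tilde Q+E_1$ lies in $\Mor^{\succcurlyeq}$ and is not proportional to $\tilde Q$). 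You flag this as ``the main obstacle,'' which is honest, but the proposal does not overcome it.

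A secondary point: Gordan's lemma gives finite generation of the monoid $\Mor(X)\cap\Pic X$, but $\Eff X$ is a priori a smaller, possibly non-saturated, subsemigroup — a class can be pseudoeffective (even a lattice point of $\Mor(X)$) without being effective. To conclude finite generation of $\Eff X$ one needs an extra ingredient, e.g.\ that on a smooth rational surface every nef class is semiample (so a polyhedral $\Mor(X)$ makes $X$ a Mori dream space with finitely generated Cox ring), or a direct argument that every integral point of the rational polyhedral $\Mor(X)$ decomposes into fixed curves plus a semiample nef part.
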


In fact under the conditions of the proposition more can be said:
B.~Harbourne computed the dimension of all linear systems
$(d;m_1,\dots,m_n)$, i.e., the Hilbert functions of all
$I(Z_\bfm)$, and even their graded free resolutions, in \cite{Har98}.

In the cases when $-K$ is effective
(or some multiple $-mK$ is effective, i.e., on a Coble surface)
but fixed, there is to the best of our knowledge no
complete characterization of the sets of points that give
finitely generated Mori cones; see however \cite{AL11}, \cite{CD12},
\cite{CT15}, \cite{GM16} and references therein.
A few of these works care also about the finite generation of
the total coordinate ring $\mathcal{TC}(X)$; this is in itself
an  interesting problem,
and it turns out that there are special blow-ups of
$\bbP^2$ where no multiple of $-K$ is effective and yet the
total coordinate ring is finitely generated \cite{GM16}.

Finally, let us also mention that by a result of Nikulin \cite{Nik00},
the surfaces with polyhedral Mori cone whose generating
curves have bounded degree and genus can be classified.

\section{Conjectures on valuations}

\subsection{Valuations and good rays}
We now move to a slightly different setting, namely blowups of $\bbP^2$
determined by some particular valuations, and finite generation
questions on them.
We refer to the references
O.~Zariski--P.~Samuel \cite[Chapter VI. and Appendix 5.]{ZS75II} and
E.~Casas--Alvero \cite[Chapter 8]{Cas00}
for the general theory of valuations and complete ideals on surfaces.

A rank 1 valuation
on a domain $R$ is a map
\[v:R\rightarrow \bbR\cup \{\infty\}\]
satisfying
\begin{equation}\label{def:valuation}
 v(fg)=v(f)+v(g), \qquad v(f+g)\ge\min(v(f),v(g)), \qquad
v(f)=\infty \Leftrightarrow f=0,
\end{equation}
for all $f, g\in R$.
Note that a valuation on a domain $R$ determines a unique valuation
on its quotient field $K$ by setting $v(f/g)=v(f)-v(g)$, and
conversely a valuation on a field $K$ restricts to a valuation
on any subring $R\subset K$. The \emph{value group}
of $v$ is $v(K^*)\subset \bbR$, a subgroup of the additive group of $\bbR$.
We will be mostly interested in the case $K=\bbC(x,y)$, and we
only consider valuations with trivial restriction to $\bbC$, that is
$v(w)=0 \ \forall w\in \bbC$.

Given a valuation $v:K\rightarrow \bbR\cup \{\infty\}$, the set of
elements $f\in K$ with $v(f)\ge0$ is a subring
$R_v\subset K$ called the \emph{valuation ring} of $v$. Valuation rings
are characterized as those subrings $S\subset K$ such that, for
every $f\in K$, either $f\in S$ or $f^{-1}\in S$.
Every valuation ring $R_v$ is a local ring, with maximal ideal $\fm_v$
consisting of those elements with positive value. Except
when the value group is discrete (i.e., there is $a\in \bbR$ such that
$v(K^*)=\bbZ a$), valuation rings are not noetherian.

\subsubsection*{Valuation ideals and volume}
We are interested in valuations on the field $\bbC(x,y)$ of
rational functions on $\bbP^2$.
Choose homogeneous coordinates
$w_1,w_2,w_3$ on $\bbP^2$, in such a way that $x=w_2/w_1, y=w_3/w_1$.
Given a valuation $v$ on  $\bbC(x,y)$ it is possible to extend it to a nonnegative valuation $v$ on the ring  $R=\bbC[w_1,w_2,w_3]$ as follows. If $v(x)\ge0$, $v(y)\ge0$,
then one simply sets $v(F_d(w_1,w_2,w_3))=v(F_d(1,x,y))$.
Otherwise let $v_{\min}=\min(v(x),v(y))<0$, and 
set 
\[v(F_d(w_1,w_2,w_3))=v(F_d(1,x,y))-d v_{\min}.\]
In particular for instance $v(w_1)=-v_{\min}$ and 
$\min\{v(w_1),v(w_2),v(w_3)\}=0$. Then the definition
extends to nonhomogeneous polynomials as
\(v(F)=\min\big\{v(F_d)\big\}\)
for any $F=\sum F_d$ in $R$, where $F_d$ is the
 homogeneous degree $d$
part of $F$.
For every non-negative $m\in \bbR$, the homogeneous ideals
\[
I_m=\{F\in R\,|\, v(F)\ge m \}, \quad \text{and} \quad
I_m^+=\{F\in R\,|\, v(F)> m \}
\]
are called \emph{valuation ideals}. They form multiplicative filtrations, that is
$I_m^+\subset I_m \subset I_{m'}^+\subset I_{m'}$ whenever $m'>m$,
moreover
$ I_mI_{m'}\subset I_{m+m'}$, and $ I_mI_{m'}^+\subset I_{m+m'}^+$.
Recall from previous sections the notation
$\alpha(I)=\min\{d|I_d\ne 0\}$ whenever $I$ is a graded ideal,
and consider the number
\[ \mu_d(v)=\max\{m \in \bbZ\,|\, [I_m]_d\ne 0\}
=\max\{m \in \bbZ\,|\,\alpha(I_m)\le d\}.\]
\begin{exercise}\label{limits}
 The limits
\[ \widehat \alpha(v)=\lim_{m \to \infty}\frac{\alpha(I_m)}{m}, \qquad
\widehat \alpha^+(v)=\lim_{m \to \infty}\frac{\alpha(I_m^+)}{m},\qquad
\widehat\mu(v)=\lim_{d\to\infty}\frac{\mu_d(v)}{d} \]
 exist and $\widehat \alpha(v)=\widehat \alpha^+(v)=\widehat \mu(v)^{-1}$. The number
 $\widehat{\alpha}(v)$ is called the \emph{Waldschmidt constant} of $v$
\\
 Hint: Look at Exercise 1.3.3(d) in  Harbourne's notes \cite{Har16}.
\end{exercise}

The description above of valuation ideals on $\bbP^2$ is a particular instance
of a more general construction. Let $X$ be a projective algebraic variety and
$v:K(X)\rightarrow \bbR \cup \{\infty\}$ a rank 1 valuation on the field
of rational functions of $X$.
Then, for every nonnegative $m\in \bbR$ one has valuation ideal sheaves
\[
\cI_m=\left(f\in \cO_X\,|\, v(f)\ge m \right), \quad \text{and} \quad
\cI_m^+=\left(f\in \cO_X\,|\, v(f)> m \right),
\]
and for every divisor class $D$, graded ideals $I_m, I_m^+$
in the graded ring $\oplus_{k\ge 0} H^0(X,kD)$.
The definitions of $\widehat\alpha$, $\widehat\alpha^+$ and
$\widehat\mu$ also carry over to this setting.

\begin{exercise}\label{sheaf}
Work out the details of the previous sheaf-theoretic definitions.
More precisely:
\begin{enumerate}
 \item For every affine open set $U\subset X$, let
 $R_U=\Gamma(\cO_X,U)$. Check that the valuation $v$
 restricts to a valuation of $R_U$.
 \item If there is $f\in R_U$ with $v(f)<0$ then set $I_{U,m}=I_{U,m}^+=R_U$
 for every $m\ge 0$;
 otherwise the set of elements in $R_U$ with value greater than or equal to
 (respectively, greater than) $m$ is an ideal $I_{U,m}$
 (respectively, $I_{U,m}^+$) of $R_U$.
 \item Gluing: the data $U \mapsto I_{U,m}$ (respectively  $U \mapsto I_{U,m}^+$)
 define a subsheaf $\cI_m$ (respectively $\cI_m^+$) of the structure sheaf $\cO_X$.
 \item \label{prime} If there is no $f\in R_U$ with $v(f)<0$ then
 $I_{U,0}^+$ is a proper prime ideal of $R_U$.
 \end{enumerate}
\end{exercise}

By part \ref{prime} of Exercise \ref{sheaf}, the sheaf $\cI_0^+$ determines
an irreducible proper subvariety of $X$, called \emph{the center} of the
valuation $v$ on $X$, and denoted by $\cent_X(v)=\cent(v)$. Set
$R_v=\{f\in K(X)\,|\,v(f)\ge 0\}$   the valuation ring of
$v$, the generic point $\eta=\eta_{\cent_X(v)}$ of the center
is the image of the closed point of $R_v$ under the unique
map $\Spec R_v \rightarrow X$ that exists by the valuative
criterion of properness \cite[II.4.7]{Har77}.
Therefore $\cent(X)$ is nonempty, and $v$ is nonnegative
on the local ring $\cO_{X,\eta}$.

\begin{exercise}
Work out the details of the graded valuation ideals, and check that for
the plane $\bbP^2$ they agree with the former definitions. More precisely:
\begin{enumerate}
\item
 Every trivializing open subset $U\subset X$ for $\cO_X(D)$
is also trivializing for $\cO_X(kD)$.
\item
 Via the induced maps
$\Gamma(\cO_X(kD),U)\overset{\sim}{\rightarrow}\Gamma(\cO_X,U)$, the valuation
$v$ determines a valuation $v_{D,U}$ on $R(D)=\oplus_{k\ge 0} H^0(X,kD)$.
\item
 If $U$ is a neighborhood of $\cent(v)$, then the valuation
$v_{D,U}$ is nonnegative on $R(D)$, independent on the choice of $U$.
Denote it by $v_D$.
\item For every $m$,  the spaces $I_{k,m}=H^0(X,\cI_m\otimes \cO_X(kD))$
are the graded pieces of an ideal $I_m=\oplus_{k\ge0}I_{k,m} \subset R(D)$,
and
\[ I_m=\left\{s\in \underset{k\ge 0}{\mathsmaller{\bigoplus}} H^0(X,kD)\,|\, v_D(s)\ge m\right\}.\]
\end{enumerate}
\end{exercise}

In particular, by the preceding exercise, given any Cartier divisor
$D$ on $X$ its valuation $v(D)$ is well defined: it equals the
valuation of any local equation of $D$ on a neighborhood of $\cent_X(v)$.
We use this fact without further mention in the sequel.

For a valuation $v$ with zero-dimensional center
on an $n$-dimensional variety $X$, the volume
was defined in \cite{ELS03} as
$$\vol(v) \,:=\,\lim_{m\to\infty}
\frac{\dim_{\bbC} (\cO_X/\mathcal{I}_m)}{m^n/n!}$$
(note that $\cO_X/\mathcal{I}_m$ is an artinian
$\bbC$-algebra supported
at the center of the valuation).
On the other hand, the volume of a divisor class $D$ on $X$
is defined as
\[
\vol (D) := \limsup_{k\to\infty} \frac{h^0(S, kD)}{k^n/n!} .
\]
Boucksom-K\"uronya-MacLean-Szemberg \cite{BKMS} show that the limit
\[\widehat\alpha_D(v)=\lim_{m\to\infty}\frac{\min\{k\in\bbZ\,|\,I_{k,m}\ne 0\}}{m} \]
exists (generalizing Exercise \ref{limits}) and
can be bounded in terms of volumes:
\begin{proposition}[{\cite[Proposition 2.9]{BKMS}}]\label{volBKMS}
Let $D$ be a big divisor and $v$ a real valuation centered at a point
$p\in X$. Then
\begin{equation*}
 \widehat \alpha_D (v)\le \sqrt[n]{\vol(v)/\vol(D)}\ .
\end{equation*}
\end{proposition}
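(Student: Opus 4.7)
The plan is to derive the inequality by a direct cohomological comparison: for each $m$, sections of $\cO_X(kD)$ with $v$-value at least $m$ form the kernel of the restriction to the ``fat point'' $\cO_X/\cI_m$, and this kernel is forced to be nonzero as soon as $h^0(X,kD)$ exceeds the length of $\cO_X/\cI_m$. Concretely, I would start from the short exact sequence
\[
0 \longrightarrow \cI_m\otimes\cO_X(kD) \longrightarrow \cO_X(kD) \longrightarrow (\cO_X/\cI_m)\otimes\cO_X(kD) \longrightarrow 0.
\]
Since $\cent_X(v)$ is the point $p$, the sheaf $\cO_X/\cI_m$ is supported at $p$ with finite length $\ell_m := \dim_{\bbC}(\cO_X/\cI_m)$; tensoring with the locally free $\cO_X(kD)$ preserves length, so $h^0\bigl(X,(\cO_X/\cI_m)\otimes\cO_X(kD)\bigr) = \ell_m$. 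Taking global sections and recalling $I_{k,m}=H^0(X,\cI_m\otimes\cO_X(kD))$ yields
\[
\dim_{\bbC} I_{k,m} \;\ge\; h^0(X,kD) - \ell_m,
\]
so that $I_{k,m}\neq 0$ whenever $h^0(X,kD) > \ell_m$.

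Next I would feed in the asymptotic growth rates. Fix $\epsilon>0$. By the definition of $\vol(v)$, one has $\ell_m \le (\vol(v)+\epsilon)\,m^n/n!$ for all $m\gg 0$. Since $D$ is big, its volume is realised as a genuine limit (not merely a $\limsup$), so $h^0(X,kD)\ge (\vol(D)-\epsilon)\,k^n/n!$ for all $k\gg 0$. Setting
\[
\rho_\epsilon \;=\; \sqrt[n]{\tfrac{\vol(v)+\epsilon}{\vol(D)-\epsilon}},
\qquad
k(m) \;=\; \lceil \rho_\epsilon\, m\rceil + k_0
\]
for a suitable constant $k_0$, an elementary computation shows that $h^0(X,k(m)D) > \ell_m$ for $m\gg 0$, so by the first step $\min\{k : I_{k,m}\neq 0\}\le k(m)$. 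Dividing by $m$, letting $m\to\infty$ and then $\epsilon\to 0$ gives $\widehat\alpha_D(v)\le \sqrt[n]{\vol(v)/\vol(D)}$, as claimed.

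The main technical obstacle is securing the lower bound $h^0(X,kD)\ge(\vol(D)-\epsilon)k^n/n!$ uniformly for \emph{all} large $k$, rather than just on a subsequence extracted from the $\limsup$ defining $\vol(D)$. For big divisors this is precisely the statement that the volume is a genuine limit, a standard consequence of Fujita's approximation theorem (see \cite{Laz04I}). Once this is in hand the rest is bookkeeping: one needs to check that the two-step limit in $m$ and $\epsilon$ is legitimate (it is, because both asymptotic bounds hold uniformly in their respective parameters once one is large enough), and that the coherent sheaves $\cI_m$ satisfy $\dim_{\bbC}(\cO_X/\cI_m)<\infty$, which is already built into the hypothesis that $\vol(v)$ is defined and finite.
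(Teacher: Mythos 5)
Your proof is correct. Note, however, that the paper does not actually prove this proposition: it cites \cite[Proposition 2.9]{BKMS} and offers only Exercise \ref{ex:muvol} as a hint for the special case of rational (divisorial) valuations on a surface. That hint works through the Mori cone of the blown-up model $X_K$ via \eqref{mu-Mor}: using $D_v^2=-\vol(v)$ (Exercises \ref{simple} and \ref{volume_from_cluster}) and $\pi_K^*D\cdot D_v=0$, one gets $(\delta\pi_K^*D+D_v)^2=\delta^2 D^2-\vol(v)$, which is positive once $\delta>\sqrt{\vol(v)/D^2}$, forcing $\delta\pi_K^*D+D_v$ into the Mori cone and bounding $\widehat\alpha_D(v)$. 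Your argument is genuinely different and closer to the original source: a direct dimension count from the short exact sequence $0\to\cI_m\otimes\cO_X(kD)\to\cO_X(kD)\to(\cO_X/\cI_m)\otimes\cO_X(kD)\to 0$, giving $\dim I_{k,m}\ge h^0(kD)-\ell_m$ and then matching the growth rates of $h^0(kD)$ and $\ell_m$. This has two real advantages: it is elementary, and it works in arbitrary dimension $n$ exactly as the statement requires, whereas the Mori-cone hint is intrinsically two-dimensional (it rests on the intersection form of a surface and the construction of $D_v$). The only nontrivial ingredient you invoke is that the volume of a big divisor is a genuine limit rather than a $\limsup$; this is indeed a theorem (a consequence of Fujita approximation, see \cite{Laz04I}), and you correctly flag it. One could even bypass it: by Fekete subadditivity (cf.\ Exercise \ref{limits}), $\widehat\alpha_D(v)=\inf_m\alpha(I_m)/m$, so it suffices to exhibit, for each $\epsilon>0$, a single $m$ with $\alpha(I_m)/m<\sqrt[n]{\vol(v)/\vol(D)}+\epsilon$, and for that the $\limsup$ form of $\vol(D)$ along a subsequence of degrees already suffices.
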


When $D$ is ample this bound is equivalent to
\(
 \widehat\alpha_D(v)\le\sqrt[n]{\vol(v)/D^n} \ .
\)
The interested reader will find in section \ref{sec:cluster} below
a hint (Exercise \ref{ex:muvol}) for the proof of this result in the particular cases
of interest to us.
Valuations satisfying the equality in Proposition \ref{volBKMS}
will be called \emph{maximal}.

We are especially interested in finding
maximal valuations with respect to a line $D=L\subset X=\bbP^2$.
Analogously to the previous sections, we may consider the support
semigroup
\[Supp_v(\mathsmaller{\bigoplus} H^0(X,kD))=\{(k,m)\in \bbZ^2 \,|\, I_{k,m}\ne0\}
\subset \bbZ^2 \subset \bbR^2\]
and the cone spanned by it:
\[
\co(v(D))=\co\left(Supp_v
\left(\mathsmaller{\bigoplus} H^0(X,kD)\right)\right)\subset \bbR^2 \ .
\]
As a planar cone, $\co(v(D))$ has two boundary rays: $\co(1,0)$ and
$\co(\widehat{\alpha}_D(v),1)$. If the valuation $v$ is maximal, the latter
may be a \emph{good ray},
that is, it may happen that
\[ \co(\widehat{\alpha}_D(v),1) \subset
\overline{\co(v(D))}\setminus \co(v(D)),\]
and in that case $v(s)<k/\widehat{\alpha}_D(v)$ for all
$s\in H^0(X,kD)$, i.e., a Nagata-type statement holds. Hence
our interest in maximal valuations on the projective plane.
\subsection{The space of valuations with given center}
\label{space_of_valuations}

If $X$ is a surface and $v$ is a valuation on $K(X)$,
whose center is not a closed point, then either $\cent(v)=X$, in which case
$v$ is the trivial valuation ($v(f)=1 \ \forall f \ne 0$) or
$\cent(v)=C$ is a curve. In the latter case, let
$p\in C\subset X$ be any point on $C$
and assume $f\in \cO_{X,p}$ is a germ of equation for $C$. Then
$v$ is non-negative on $\cO_{X,\eta}$, where
$\eta$ is the generic point of $C$, and hence on
$\cO_{X,p}\subset\cO_{X,\eta}$,
and therefore $v(u)=0$ for every invertible element $u$ of
$\mathcal{O}_{X,p}$. For any $g\in \mathcal{O}_{X,p}$
one can write $g=g'f^s$ for some $g'$ invertible in
$\mathcal{O}_{X,p}$ and some non-negative integer $s$, and therefore
$v(g)=sv(f)$. Thus, whenever $\cent(v)$ is a curve $C$, the valuation $v$ is
(up to a constant $c=v(f)\in \bbR$)  the order of vanishing
along $C$; i.e., for every divisor $D$, one has
$v(D)=c \cdot \ord_C D=c \cdot \max\{k\,|\,D-kC\ge 0\}$.
These are called \emph{divisorial valuations}.

Henceforth we focus in the case that the center of $v$ is a closed point $p\in X$.
Such valuations are non-negative on the local ring
$\cO_{X,p}$, i.e., they restrict to
maps $v:\cO_{X,p}\rightarrow \bbR_{\ge 0} \cup\{\infty\}$
satisfying \eqref{def:valuation}.
The minimal strictly positive value of $v$ on $\cO_{X,p}$ is called the
\emph{value}  of $v$ at $p$, $v(p)$; it is the common value of
\emph{general} elements in the maximal ideal $\mathfrak{m}_{X,p}\subset \cO_{X,p}$
\cite[8.1]{Cas00}. An example of a valuation with zero-dimensional
center is the order
of vanishing at $p$, that can be also obtained blowing up $X$ at $p$,
and  considering  the divisorial valuation  centered on the exceptional
divisor.

\begin{example}[Monomial valuations]
Fix affine coordinates $(x,y)$ near
$\cent(v)=p=(0,0)\in\bbA^2=\Spec \bbC[x,y] \subset \bbP^2=\Proj \bbC[w_1,w_2,w_3]$,
with $x=w_2/w_1, y=w_3/w_1$.
Given two nonnegative real numbers $s,t$, we can define a valuation on
$\bbC[x,y]$ by setting
\[
v_{s,t}\left(\sum_{\substack{i,j\ge0\\i+j\le d} }
a_{ij}x^i y^j\right)=\min\{si+tj | a_{ij}\ne0\}.
\]
As particular cases we obtain that $v_{0,0}$ is the trivial valuation;
$v_{s,0}$ is $s$ times the divisorial valuation centered on the line $x=0$; while
$v_{0,t}$ is $t$ times the divisorial valuation centered on the line $y=0$; and
$v_{1,1}$ is the order of vanishing at $p$.
Whenever $s\cdot t>0$, the center of $v_{s,t}$ is $p=(0,0)$.
\end{example}

Remark that for every $\lambda>0$, one has
$v_{\lambda s,\lambda t}=\lambda v_{s,t}$, hence
there is an equality of valuation rings
$R_{v_{\lambda s,\lambda t}}=R_{v_{s,t}}$.
Two valuations $v$, $v'$ with the same valuation ring
are called \emph{equivalent}.

\begin{example}[Quasimonomial valuations]
Let $u,w\in \bbC[x,y]$ be a system of parameters for $p$, i.e.,
\[
(u,w)\cO_{X,p}=\fm_{X,p},
\]
or in other words, the curves $\{u=0\}$ and  $\{w=0\}$ meet transversely
at $p=(0,0)$. Then, every element $f$ in $\cO_{X,p}$ (or in its completion
$\widehat{\cO_{X,p}}$, or in the polynomial ring $\bbC[x,y]$)
has a Taylor expansion
\[
f=\sum_{i,j\ge 0} a_{ij}u^iw^j,
\]
and we can define
\(
v_{s,t}^{u,w}(f)=\min\{si+tj | a_{ij}\ne0\}.
\)
Again one obtains as extreme cases the divisorial valuations
associated to the curves $\{u=0\}$ and  $\{w=0\}$, and for positive parameters
the valuations obtained have center at $p$. Note that this construction
is possible on every smooth point of a surface $X$.
\end{example}

\begin{exercise}
 Assume $s,t>0$ and let $\cI_{m}$ be the valuation
 ideal  with respect to the valuation $v_{s,t}^{u,w}$. Show that
 $\cI_m$ has cosupport at the point $p$.
Let $I_{m,p}$ be the stalk at $p$ of $\cI_{m}$.
 Show that the set of classes $\{[u^i w^j]\}_{si+tj<m}$
 form a basis of
 $\cO_{X,p}/I_{m,p}$ as a $\bbC$-vector space.
 Deduce that $\vol v_{s,t}^{u,w}=1/st$.
\end{exercise}

\begin{proposition}\label{sufficiency}
Let $u_1, u_2, w_1,w_2\in \cO_{X,p}$ and $t>s>0$. Assume that
\begin{enumerate}
 \item $(u_1,w_1)=(u_1,w_2)=(u_2,w_1)=(u_2,w_2)=\fm_{X,p}$, i.e., each pair
$(u_i,w_j)$ is a system of parameters;
 \item $\dim_\bbC \cO_{X,p}/(w_1,w_2)\ge t/s$.
\end{enumerate}
Then $v_{s,t}^{u_1,w_1}=v_{s,t}^{u_1,w_2}=v_{s,t}^{u_2,w_1}=v_{s,t}^{u_2,w_2}$.
\end{proposition}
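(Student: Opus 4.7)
The plan is to pass to the $\fm$-adic completion $\widehat{\cO_{X,p}}$, where the choice of parameters $(u_1,w_1)$ provides an isomorphism with $\bbC[[X,Y]]$ identifying $v:=v_{s,t}^{u_1,w_1}$ with the standard weight-$(s,t)$ monomial valuation. By transitivity of equality, it suffices to establish the two comparisons $v_{s,t}^{u_2,w_1}=v$ and $v_{s,t}^{u_1,w_2}=v$; the other two equalities then follow. The key reduction is that $v_{s,t}^{u',w'}=v$ if and only if the ring automorphism $\Psi$ of $\widehat{\cO_{X,p}}$ sending $(u_1,w_1)\mapsto(u',w')$ preserves $v$, and this in turn will follow once one has (i) computed the $v$-initial forms of $\Psi(u_1)$ and $\Psi(w_1)$, and (ii) checked a no-cancellation property in the leading piece of $\Psi(f)$.

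For the first comparison, since $(u_2,w_1)$ is a system of parameters the expansion $u_2=\alpha u_1+\beta w_1+g_1$ (with $g_1\in\fm_{X,p}^2$) forces $\alpha\neq 0$, and the bound $t>s$ together with $g_1\in\fm^2$ yields $v(u_2-\alpha u_1)\ge\min(t,2s)>s$. Hence the $v$-initial form of $u_2$ is $\alpha u_1$, so $\Psi(u_1^iw_1^j)=u_2^iw_1^j$ has $v$-initial form $\alpha^i u_1^i w_1^j$ of the correct weight $si+tj$, and linear independence of the monomials $u_1^iw_1^j$ of a given weight rules out any cancellation in the leading piece of $\Psi(f)$.

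For the second comparison, write $w_2=\delta w_1+\gamma u_1+g_2$ with $g_2\in\fm^2$; since $(u_1,w_2)$ is a system of parameters, $\delta\neq 0$. Here the intersection-multiplicity hypothesis is decisive: modulo $w_1$ one has $\cO_{X,p}/(w_1)\cong\bbC[[u_1]]$ with $w_2$ becoming $\gamma u_1+g_2(u_1,0)$, so $\dim_{\bbC}\cO_{X,p}/(w_1,w_2)=\ord_{u_1}(\gamma u_1+g_2(u_1,0))\ge t/s>1$ forces $\gamma=0$ and $g_2(u_1,0)$ to vanish in $u_1$ to order at least $t/s$. Since every term of $g_2$ containing a positive power of $w_1$ already has $(s,t)$-weight $\ge t$, this gives $v(w_2-\delta w_1)\ge t$, whence $v(w_2)=t$, and the $v$-initial form of $w_2$ is either $\delta w_1$ or, in the commensurable case $t/s\in\bbZ$ when the coefficient $c$ of $u_1^{t/s}$ in $g_2$ does not vanish, the \emph{deformed} polynomial $\delta w_1+cu_1^{t/s}$.

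The main obstacle is precisely this commensurable case $t/s\in\bbZ$: then distinct monomials $u_1^iw_1^j$ can share the same $(s,t)$-weight, so expanding the leading form of $\Psi(f)$ in the basis $\{u_1^iw_1^j\}$ could a priori produce cancellations between contributions coming from different terms of the $(u_1,w_2)$-expansion of $f$. This is overcome by the linear substitution $w_1\mapsto w_1-(c/\delta)u_1^{t/s}$, which preserves the $(s,t)$-grading and converts the leading form of each $u_1^iw_2^j$ back to $\delta^j u_1^iw_1^j$; after this substitution, the weight-$v(f)$ piece of $\Psi(f)$ becomes (up to nonzero scalars $\delta^j$) the leading form of $f$ itself, hence nonzero by definition of $v(f)$. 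This yields $v(\Psi(f))=v(f)$ and finishes the proof.
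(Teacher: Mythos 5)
Your proof is correct and follows the same broad strategy as the paper's: expand the new coordinate in terms of the reference pair $(u_1,w_1)$, bound the $(s,t)$-weight of the error term, and track the initial form through the Taylor expansion. You are, however, noticeably more careful than the published sketch in two places where it has real gaps. First, the paper asserts from hypothesis~(1) alone that $u_2=u_1+h(w_1)$ with $h$ a series in $w_1$; this normalized form is not a consequence of $(u_2,w_1)$ being a system of parameters (take $u_2=u_1(1+w_1)$), and justifying the reduction would require Weierstrass preparation together with a lemma that $v_{s,t}^{u,w}$ is unchanged when $u$ is multiplied by a unit --- itself a statement of the same nature as the proposition being proved. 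You sidestep this entirely by working with the general expansion $u_2=\alpha u_1+\beta w_1+g_1$. Second, the paper leaves the $w_1\to w_2$ comparison to the reader without flagging that, in the commensurable case $t/s\in\bbZ$, the $v$-initial form of $w_2$ may be the two-term expression $\delta w_1+c\,u_1^{t/s}$, so distinct monomials of the same weight could mix and, a priori, cancel in the leading piece after substitution. Your observation that $w_1\mapsto \delta w_1+c\,u_1^{t/s}$ is an automorphism of the $(s,t)$-graded polynomial ring --- hence injective on each graded piece --- is exactly the missing no-cancellation argument. In short: same route, but your version fills the gaps.
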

 Note that the second hypothesis means that the intersection multiplicity
 of $\{w_1=0\}$ and $\{w_2=0\}$ at $p$ is at least $t/s$; given that both $\{w_1=0\}$ and $\{w_2=0\}$ are smooth germs of curves at $p$ by the first hypothesis,
 this is equivalent to
 saying that the $\lceil t/s\rceil$-jets of $w_1$ and $w_2$ coincide, i.e.,
 $w_1-w_2\in \fm_{X,p}^{\lceil t/s\rceil}$. Thus Proposition \ref{sufficiency}
says that whenever $t>s$, the valuation   $v_{s,t}^{u,w}$ does not
depend on the choice of $u$, and it only depends on the
 $\lceil t/s\rceil$-jet of $w$.
\begin{proof} [Proof of Proposition \ref{sufficiency}]
  By the first hypothesis, there is a series $h(w_1)=\sum_{i\ge1}a_i w_1^i$
  with $u_2=u_1+h(w_1)$. Since $t>s$, then
  $v_{s,t}^{u_1,w_1}(h(w_1))\ge v_{s,t}^{u_1,w_1}(w_1)>v_{s,t}^{u_1,w_1}(u_1)$, and
  \[ v_{s,t}^{u_1,w_1}(u_1)=
 v_{s,t}^{u_1,w_1}(u_1+h(w_1))=v_{s,t}^{u_1,w_1}(u_2).\]
 Therefore, for every $f\in \cO_{X,p}$, the Taylor expansions of $f$
 with respect to $(u_1,w_1)$ and $(u_2,w_1)$ are related by
 \[
\begin{split}
 f=\sum_{i,j\ge 0} a_{ij}u_2^i w_1^j=
 \sum_{i,j\ge 0} a_{ij}(u_1+h(w_1))^i w_1^j=\\
 \sum_{i,j\ge 0} a_{ij}u_1^i w_1^j + \text{ terms with higher }v_{s,t}^{u_1,w_1}.
\end{split}
 \]
 By the definition
 of quasimonomial valuations, it follows that $v_{s,t}^{u_1,w_1}=v_{s,t}^{u_2,w_1}$
 and so also $v_{s,t}^{u_1,w_2}=v_{s,t}^{u_2,w_2}$.

 On the other hand, the second hypothesis implies that there is some
 series $h(u_1)=\sum_{i\ge \lceil t/s\rceil} a_i u_1^i$ with
 $w_2=w_1+h(u_1)$. As before, this implies that
 $v_{s,t}^{u_1,v_1}(w_1)=v_{s,t}^{u_1,v_1}(w_2)$, and plugging
 $w_2=w_1+h(u_1)$ into the Taylor series of any $f$, the equality
 $v_{s,t}^{u_1,w_1}=v_{s,t}^{u_1,w_2}$. We leave the details to the reader.
\end{proof}

\subsubsection*{Valuative trees}

Our next goal is to describe the space of all equivalence classes of
quasimonomial valuations of $\cO_{X,p}$, in the spirit of \cite{FJ04}.
In order to avoid dealing
with equivalent valuations, we
\emph{normalize} them in such a way that the minimum
strictly positive value of $f\in \cO_{X,p}$ is 1
(i.e., the value of $v$ at $p$, $v(p)=1$). For $v_{s,t}^{u,w}$,
this minimal value is $\min\{s,t\}$. Fix a system of parameters
$(x,y)\in \cO_{X,p}$ (for $X=\bbP^2$,
we set $x, y$ to be local affine coordinates).
Set
\begin{equation}\label{quasimonval}
\begin{split}
&\cQ =\left\{\text{ quasimonomial valuations
centered at } p\right\}/\text{equiv}\\
&\cQ_x =\left\{v\in \cQ \text{ such that } v(x)=v(p)\right\}\\
&\cQ_y =\left\{v\in \cQ \text{ such that } v(y)=v(p)\right\}.
\end{split}
\end{equation}
Note that $v(p)=\min\{v(x),v(y)\}$, so $\cQ=\cQ_x \cup \cQ_y$.

\begin{exercise}\label{ex:quasimonomial}
 Let $\xi=\xi(x)$ be a formal power series in $x$ and define,
for every $f\in\cO_{X,p}$,
\begin{equation}\label{series_valuation}
\vv[f]{\xi}{t} := \ord_x(f(x,\xi(x)+\theta x^t))\ ,
\end{equation}
where the symbol $\theta$ is transcendental over $\bbC$.
Show that $\vv{\xi}{t}$ is a valuation of $\cO_{X,p}$.

Let $w \in \cO_{X,p}$ be such
 that $w=0$ is not tangent to $x=0$ at the point $p=(0,0)$.
 Expand $w$ as a Taylor series or polynomial, $w=w(x,y)$. By the implicit
 function theorem, there is a convergent power series $\xi(x)$ such that
 $w(x,\xi(x))=0$.
Show that for this $\xi$, one has $\vv{\xi}{t}=v_{1,t}^{x,w}$.
\end{exercise}

\begin{theorem}\label{parametrized-tree}
Fix a system of parameters $(x,y)\in \cO_{X,p}$.
\begin{enumerate}
 \item For every $\xi=\xi(x)$ a formal power series in $x$, the map
\begin{align*}
 \bbR_{\ge 1}&\overset{v_\xi}\longrightarrow \cQ_x\\
 t & \longmapsto \vv{\xi}{t}
\end{align*}
is injective, and for every $f\in \cO_{X,p}$ the map
$t  \mapsto \vv[f]{\xi}{t}$ is continuous.
\item $\vv{\xi_1}{t_1}=\vv{\xi_2}{t_2}$ if and only if $t_1=t_2$
and $\ord_x(\xi_1-\xi_2)\ge t_1$.
\item For every $v\in \cQ_x$, there exist $\xi$ and $t$ such that
$v=\vv{\xi}{t}$.
\end{enumerate}
\end{theorem}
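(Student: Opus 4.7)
The plan is to reduce all three parts to one explicit formula for $\vv[f]{\xi}{t}$. Given $f\in\cO_{X,p}$, I Taylor-expand around the smooth formal germ $\{y=\xi(x)\}$:
\[
f(x,y)=\sum_{j\ge 0}g_j(x)\bigl(y-\xi(x)\bigr)^j,\qquad g_j(x)=\tfrac{1}{j!}(\partial_y^j f)(x,\xi(x))\in\bbC[[x]].
\]
Substituting $y=\xi(x)+\theta x^t$ into \eqref{series_valuation} and using that $\theta^j$ are linearly independent over $\bbC[[x]][x^t]$ (because $\theta$ is transcendental over $\bbC$), one extracts the key identity
\[
\vv[f]{\xi}{t}=\min_{j\ge0}\bigl(\ord_x g_j+jt\bigr).
\]
From this it is routine that $\vv{\xi}{t}$ is a nonnegative valuation on $\cO_{X,p}$, centered at $p$ when $\xi(0)=0$, and satisfying $v(p)=1$ for $t\ge1$.

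Part (1) is then immediate: the right-hand side above is a pointwise minimum of countably many affine functions of $t$, of which only finitely many are active on any bounded interval, so $t\mapsto\vv[f]{\xi}{t}$ is continuous (concave, piecewise affine); specializing to $f=y-\xi(x)$ yields $g_1=1$ and $g_j=0$ otherwise, hence $\vv[y-\xi]{\xi}{t}=t$, so different values of $t$ produce different valuations. For Part (2) $(\Leftarrow)$, I relate the two Taylor expansions via $g_j^{(2)}=\sum_{k\ge 0}\binom{j+k}{k}g_{j+k}^{(1)}(\xi_2-\xi_1)^k$; since $\ord_x(\xi_1-\xi_2)\ge t$ (equivalently $\ge\lceil t\rceil$, as orders are integers), each summand has $\ord_x\ge\ord_x g_{j+k}^{(1)}+kt$, so $\ord_x g_j^{(2)}+jt\ge\min_k(\ord_x g_{j+k}^{(1)}+(j+k)t)$; taking the minimum over $j$ gives $\vv[f]{\xi_2}{t}\ge\vv[f]{\xi_1}{t}$, and the reverse inequality follows by symmetry. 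For $(\Rightarrow)$, assuming $v:=\vv{\xi_1}{t_1}=\vv{\xi_2}{t_2}$ and setting $m=\ord_x(\xi_1-\xi_2)$, I compute $v(y-\xi_i)$ in two ways: directly it equals $t_i$, while via $y-\xi_i=(y-\xi_{3-i})+(\xi_{3-i}-\xi_i)$ and $v|_{\bbC[[x]]}=\ord_x$ (invoking the transcendence of $\theta$ to rule out cancellation in the boundary case $m=t_{3-i}$) one gets $v(y-\xi_i)=\min(t_{3-i},m)$ if $m\neq t_{3-i}$ and $t_{3-i}$ otherwise. A short four-case analysis on $m$ versus $t_1,t_2$ forces $t_1=t_2$ and $m\ge t_1$.

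For Part (3), let $v\in\cQ_x$ be normalized so that $v(x)=v(p)=1$. Since $v$ is quasimonomial, $v=v_{1,t'}^{u,w}$ for some system of parameters $(u,w)$ and some $t'\ge1$ with $v(u)=1$, $v(w)=t'$. If $t'=1$, then $v$ is the $\fm$-adic order at $p$ (independent of the choice of parameters), and the key formula with $\xi=0$, $t=1$ gives $\vv[f]{0}{1}=\min_{a_{ij}\ne0}(i+j)=\mult_p(f)$ for every $f=\sum a_{ij}x^iy^j$, so $v=\vv{0}{1}$. If $t'>1$, writing $x=\lambda u+\mu w+O(\fm_{X,p}^2)$, the requirement $v(x)=1$ forces $\lambda\ne 0$ (otherwise $v(x)\ge\min(t',2)>1$), so the change of basis from $(\bar u,\bar w)$ to $(\bar x,\bar w)$ is nondegenerate in $\fm_{X,p}/\fm_{X,p}^2$ and $(x,w)$ is again a system of parameters. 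Proposition~\ref{sufficiency} (applicable since $t'>1$) then gives $v=v_{1,t'}^{x,w}$, and the second half of Exercise~\ref{ex:quasimonomial} produces $\xi\in x\bbC[[x]]$ with $w(x,\xi(x))=0$ such that $v=\vv{\xi}{t'}$.

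The main obstacle is the careful management of the transcendence of $\theta$: it underpins the key formula itself and must be wielded delicately in the boundary cases of Part (2) to preclude accidental cancellations when $\ord_x(\xi_1-\xi_2)$ coincides with $t_1$ or $t_2$. A secondary technical point in Part (3) is verifying that $(x,w)$ can serve as a system of parameters, so that Proposition~\ref{sufficiency} and the implicit function theorem may be invoked; this transversality ultimately rests on the hypothesis $v(x)=v(u)=1$ together with $t'>1$.
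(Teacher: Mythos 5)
Your proof is correct and rests on the same core idea as the paper's: evaluate $\vv{\xi}{t}$ on a germ of a curve tangent to $\{y=\xi\}$ and read off $t$. One small technical point to be aware of: your distinguishing element $f=y-\xi(x)$ (and similarly $y-\xi_i$ in Part (2)) lives in the completion $\widehat{\cO_{X,p}}$ rather than in $\cO_{X,p}$ whenever $\xi$ is a genuine (non-polynomial) power series, whereas the paper's proof sidesteps this by evaluating on the truncations $\omega_n=y-\sum_{i\le n}a_ix^i\in\cO_{X,p}$, for which an elementary computation gives $\vv[\omega_n]{\xi}{t}=t$ on $[1,n]$. Your version is fine once one notes that $\vv{\xi}{t}$ extends by continuity to $\widehat{\cO_{X,p}}$ (because $v(\fm_{X,p}^k)\ge k$), or one can simply replace $y-\xi_i$ by $\omega_n$ for $n$ large everywhere in Parts (1) and (2). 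Your key formula $\vv[f]{\xi}{t}=\min_j(\ord_x g_j+jt)$ is exactly the Newton-polygon description from Remark~\ref{rem:Newton}, derived here from scratch. Where you genuinely add value is in Parts (2) and (3), which the paper waves off as immediate consequences: your boundary-case analysis in Part (2), where the transcendence of $\theta$ rules out cancellation precisely when $\ord_x(\xi_1-\xi_2)$ hits $t_i$, and your verification in Part (3) that the hypothesis $v(x)=1$ together with $t'>1$ forces $(x,w)$ to be a system of parameters (so that Proposition~\ref{sufficiency} and Exercise~\ref{ex:quasimonomial} apply), make precise what the paper leaves to the reader.
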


\begin{proof}
 We will show that $v_\xi$ is injective in the interval $[1,n]$ for every
 positive integer $n$. It follows that it is injective in the whole half line.
 Let $\xi(x)=\sum_{i=1}^\infty a_i x^i$ and consider
 \[ \omega_n= y-\sum_{i=1}^n a_i x^i\in\cO_{X,p}.\]
 An elementary computation shows that $v_{\xi,t}(\omega_n)=t$
 for $t\in [1,n]$, so the
 claimed injectivity follows.
 The rest of the claims are immediate consequences of previous results.
\end{proof}

\begin{remark}\label{rem:Newton}
For a fixed $f\in \cO_{X,p}$, the map $t  \mapsto \vv[f]{\xi}{t}$
is continuous, concave, piecewise linear
with integer coefficients (i.e., a tropical polynomial function).
To see this, let
\[\omega= y-\sum_{i=1}^\infty a_i x^i\in\widehat{\cO_{X,p}},\]
and expand $f$ as a power series
\[ f=\sum_{i,j\ge 0}a_{ij}x^iw^j\in\widehat{\cO_{X,p}}\, .\]
Let
\[S(f)=\operatorname{conv}\left(\left\{(i,j)\in \bbN^2\,|\, a_{ij}\ne 0\right\}\right)\]
be the convex hull of the support of $f$. Its lower left boundary is
called the \emph{Newton polygon} of $f$ (in the formal coordinates
$(x,w)$), and denoted by
\[N(f)=\partial\left(S(f)+(\bbR_{\ge 0})^2\right).\]
The Newton polygon $N(f)$ consists of a vertical half
line followed by a finite sequence of segments with
increasing negative (rational) slopes
and a horizontal half line. Let $\Gamma_1, \dots, \Gamma_k$
be the segments with slopes $\ge -1$, and call these slopes
 $-1\le \gamma_1 \le \dots\le\gamma_k$.
Let also $V_1, \dots, V_{k+1}$
be the vertices, so that $\Gamma_{\ell-1}\cap \Gamma_{\ell}=V_\ell$.

By Exercise \ref{ex:quasimonomial} we know that
$\vv{\xi}{t}(f)=\min\{i+tj\,|\,a_{ij}\ne0\}$, and clearly this
minimum is attained at at a monomial $a_{ij}x^iw^j$ with
$(i,j)\in N(f)$. Moreover the monomial is unique,
with $(i,j)$ one of the vertices $V_\ell$, unless $-t^{-1}$ is
the slope of one of the segments $\Gamma_{\ell}$. More precisely,
for all $t\in [-\gamma_{\ell-1}^{-1},-\gamma_\ell^{-1}]$, the minimum
is attained at $(i,j)=V_\ell$ (and $\vv{\xi}{t}(f)=i+tj$ in this interval, which is linear with integer slope).

In convex geometry the function $t\mapsto \vv{\xi}{t}(t)$ obtained
in this way is usually called the \emph{Legendre transform}
of the Newton polygon.
\end{remark}

We endow $\cQ_x$ with the final topology with respect to all maps
$v_\xi$. Because of the second statement in
Theorem \ref{parametrized-tree}, each of these maps becomes an homeomorphism
of the half-line $\bbR_{\ge 1}$ with its image, and the intersection
 \[v_{\xi_1}(\bbR_{\ge 1}) \cap v_{\xi_2}(\bbR_{\ge 1}) \]
 is homeomorphic to the segment $[1,\ord_x(\xi_1-\xi_2)]$.
It is easy to see that the topologies induced by $\cQ_x$
and $\cQ_y$in $\cQ_x\cap\cQ_y$ agree, endowing the whole
set $\cQ$ with a topology that makes it into a \emph{profinite $\bbR$-tree},
rooted at the valuation $v_{\xi,1}$ (which is the `order at $p$' valuation)
with maximal branches of the tree corresponding to the series $\xi$,
two branches separating at the points (of integer parameter $t$)
corresponding to $\ord_x(\xi_1-\xi_2)$.

\begin{remark}\label{tree}
 The tree of quasimonomial valuations just constructed is a subset of the
 \emph{valuative tree} $\cT$ of all classes of
 rank 1 valuations centered at $p$ introduced
 by Favre and Jonsson. To build the whole $\cT$ one proceeds in essentially
 the same way, observing that in \eqref{series_valuation}
  one may allow formal series $\xi(x)=\sum_{j\ge 1} a_j x^{\beta_j}$
whose exponents $\beta_j$ form
an arbitrary increasing sequence of rational numbers,
and one still obtains valuations $\vv{\xi}{t}$
(no longer quasimonomial). Unless the series defines an \emph{algebraic}
function, i.e., unless
it vanishes identically on some curve $C\subset X$, it is
also possible to allow $t=\infty$. The precise statement and proof of
Theorem \ref{parametrized-tree} then becomes technically more involved, see
\cite[Chapter 4]{FJ04} and \cite[8.2]{Cas00}
for details. The resulting tree in that case has branching points at all rational
values of the parameter $t$ (not just at the integers) and also branches of
finite length.

The topology we just described on $\cQ$ (and on $\cT$) is sometimes
called $\emph{the strong topology}$ in contrast with
a second (weaker) natural topology on $\cQ$ and $\cT$,
namely the coarsest such that for all $f\in K(X)$, the map
$v\mapsto v(f)$ is a continuous map $\cT \rightarrow \bbR$.
\end{remark}

\subsection{The Waldschmidt constant as a function on $\cQ$}

In certain cases, the invariant
$\widehat\alpha$ of valuations centered at a point $p$
of the plane is known.
We now review, following \cite{DHKRS}, what is known for quasimonomial
valuations, referring to \cite{GMM} for an overview and extension
of the results to arbitrary valuations centerd at $p$.
Fix again affine coordinates $(x,y)$ near
$\cent(v)=p=(0,0)\in\bbA^2$; for simplicity, given a series $\xi(x)$,
write
\[
\alpha(\xi,t,m) = \alpha(I_{\vv{\xi}{t},m})\ , \quad
\widehat\alpha(\xi,t)=\widehat \alpha(\vv{\xi}{t})\ , \quad
\text{and} \quad
\widehat\mu(\xi,t)=\widehat \mu(\vv{\xi}{t})\ .
\]
Recall from Exercise \ref{limits} that
$\widehat\mu(v_{\xi,t})=\widehat\alpha(v_{\xi,t})^{-1}$.
In this section we consider $\widehat\alpha$ and $\widehat\mu$
as functions of $\xi$ and $t$; it will turn out
that $\widehat\mu$ is simpler, as a function of $t$, than $\widehat\alpha$,
and we shall focus on the former.

\begin{proposition}\label{continuoust}
 For every $\xi(x)$, the function $t\mapsto \mm{\xi}{t}$,
 for $t \in [1,\infty)$, is Lipschitz continuous
 with Lipschitz constant 1.
\end{proposition}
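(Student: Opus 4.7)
The plan is to bound $v_{\xi,t}(f)$ as a function of $t$ on polynomials of bounded degree, using the Newton polygon description of Remark \ref{rem:Newton}, and then pass to the limit defining $\widehat\mu$.

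First I would fix $t_1 > t_2 \geq 1$ and consider a polynomial $f \in \bbC[x,y]$ of total degree at most $d$. Expand $f$ in the formal variables $(x, \omega)$ with $\omega = y - \xi(x)$ as in Remark \ref{rem:Newton}, writing $f = \sum a_{ij} x^i \omega^j$. By that same remark, $v_{\xi,t}(f) = \min\{i + tj \mid a_{ij}\neq 0\}$. The key observation is that $\omega$ is obtained from $y$ by subtracting a power series in $x$ alone, so the substitution $y = \omega + \xi(x)$ is linear in the variable $y$; it follows that $\deg_\omega f = \deg_y f \leq d$, and hence every index $(i,j)$ appearing in the support of $f$ in $(x,\omega)$ satisfies $0 \leq j \leq d$.

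Let $(i_k,j_k)$ attain the minimum defining $v_{\xi,t_k}(f)$ for $k=1,2$. Comparing the two expressions gives
\[
v_{\xi,t_1}(f) \;\leq\; i_2 + t_1 j_2 \;=\; v_{\xi,t_2}(f) + (t_1-t_2)j_2,
\qquad
v_{\xi,t_2}(f) \;\leq\; v_{\xi,t_1}(f) + (t_2-t_1)j_1.
\]
Since $0 \le j_k \le d$ and $t_1 > t_2$, the second inequality shows $v_{\xi,t_1}(f) \ge v_{\xi,t_2}(f)$ and the first gives
\[
0 \;\leq\; v_{\xi,t_1}(f) - v_{\xi,t_2}(f) \;\leq\; d\,(t_1-t_2).
\]
Now, $\mu_d(v_{\xi,t})$ equals (up to the floor coming from its integrality) the supremum of $v_{\xi,t}(F(1,x,y))$ over nonzero homogeneous $F \in \bbC[w_1,w_2,w_3]_d$, i.e.\ over polynomials $f = F(1,x,y)$ of degree at most $d$. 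Taking the supremum of the previous inequality over such $f$ yields
\[
\bigl|\,\mu_d(v_{\xi,t_1}) - \mu_d(v_{\xi,t_2})\,\bigr| \;\leq\; d\,|t_1-t_2| + 1.
\]

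Dividing by $d$ and letting $d \to \infty$ in the definition $\widehat\mu(\xi,t) = \lim_{d\to\infty} \mu_d(v_{\xi,t})/d$ (Exercise \ref{limits}) yields
\[
\bigl|\,\mm{\xi}{t_1} - \mm{\xi}{t_2}\,\bigr| \;\leq\; |t_1-t_2|,
\]
which is the claimed Lipschitz bound with constant $1$. The only non-routine point in the argument is the inequality $\deg_\omega f \le d$; once this is in hand, everything reduces to a one-line comparison of Legendre transforms of a common Newton polygon, so I expect no real obstacle beyond carefully setting up the $(x,\omega)$-expansion in the completion of the local ring.
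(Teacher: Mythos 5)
Your proof is correct and uses the same key idea as the paper: the slope of $t\mapsto v_{\xi,t}(f)$ is controlled by the $\omega$-degree of $f$ (with $\omega=y-\xi(x)$), which is bounded by $\deg f$. The paper phrases this by noting each $\mu_f = v_{\xi,t}(f)/\deg f$ is $1$-Lipschitz and that $\widehat\mu = \sup_f \mu_f$ inherits the Lipschitz bound, whereas you pass through the integer-valued $\mu_d$ and take the limit, which is a minor bookkeeping difference — if anything, your route spells out a step the paper leaves implicit.
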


\begin{proof}
 For every $f\in \bbC[x,y]$, the function $t\mapsto \vv[f]{\xi}{t}$
 is a tropical polynomial function of degree at most $\deg(f)$,
 as explained in remark \ref{rem:Newton}.
 Therefore, the scaled function $\mu_f:t\mapsto \vv[f]{\xi}{t}/\deg(f)$
 is continuous concave and piecewise affine linear with slopes in
 $\{0,1/\deg(f),\\2/\deg(f), \dots, 1\}$
 (compare with \cite[Corollary C]{BFJ09}).
 In particular, it is Lipschitz continuous with Lipschitz constant at most 1.

 The function $t\mapsto \mm{\xi}{t}$ in the claim is
 $\sup_{f\in \bbC[x,y]}\{\mu_f\}$; therefore it is also Lipschitz continuous
  with Lipschitz constant at most 1 (and it is not hard to see that it is actually
  equal to 1).
 \end{proof}

It is immediate to extend the definition of $\mu$ and $\widehat\mu$ to the
tree $\cT$ of all valuations centered at $p$. The continuity properties
of the resulting function $\widehat\mu:\cT\rightarrow \bbR$
---which we shall not need---
are summarized as follows:

 \begin{theorem}[Dumnicki-Harborune-Küronya-Roé-Szemberg, \cite{DHKRS}]\label{lower-semicontinuous}
    The function  $\widehat\mu:\cT\rightarrow \bbR$ is
lower semicontinuous for the weak topology and continuous
for the strong topology.
 \end{theorem}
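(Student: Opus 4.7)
My strategy is to handle each topology separately, relying on a supremum representation of $\widehat\mu$ in the weak case and on the branchwise description of $\cT$ in the strong case.

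For the weak topology, first observe that $\mu_d$ is superadditive in $d$: if $f_i \in \bbC[\bfw]_{d_i}$ realises $v(f_i) = \mu_{d_i}(v)$, then by the multiplicative filtration $I_{v,m}I_{v,m'}\subseteq I_{v,m+m'}$ one has $v(f_1 f_2) = \mu_{d_1}(v) + \mu_{d_2}(v)$, so $\mu_{d_1+d_2}(v) \ge \mu_{d_1}(v) + \mu_{d_2}(v)$. Fekete's lemma together with Exercise \ref{limits} then yields
\[
\widehat\mu(v) \;=\; \sup_{d\ge 1}\frac{\mu_d(v)}{d} \;=\; \sup_{d\ge 1}\;\sup_{0\neq f\in \bbC[\bfw]_d}\frac{v(f)}{d}.
\]
By the very definition of the weak topology, each function $v\mapsto v(f)$ is continuous; a supremum of continuous (hence lower semicontinuous) functions is lower semicontinuous, which gives the first half of the theorem.

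For the strong topology, by Theorem \ref{parametrized-tree} and its extension to $\cT$ sketched in Remark \ref{tree}, the strong topology is the final topology with respect to the branch maps $v_\xi\colon [1,\infty]\to \cT$. Hence it suffices to prove that $t\mapsto \widehat\mu(\vv{\xi}{t})$ is continuous on $[1,\infty]$ for every branch parameter $\xi$. On each bounded interval this is a direct generalisation of Proposition \ref{continuoust}: by the Newton--polygon description of Remark \ref{rem:Newton}, for any polynomial $f$ the function $t\mapsto \vv[f]{\xi}{t}$ is a tropical polynomial in $t$, piecewise linear with integer slopes bounded by $\deg f$, so $t\mapsto \vv[f]{\xi}{t}/\deg f$ is $1$-Lipschitz; taking the supremum over $d$ and over $f$ preserves the Lipschitz constant, making $\widehat\mu\circ v_\xi$ continuous on bounded intervals. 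Combined with the lower semicontinuity already proved (which transfers to the finer strong topology), this yields strong continuity at every interior point of the tree.

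The main obstacle I expect is the analysis at the endpoint $t=\infty$ and at those points of $\cT$ corresponding to curve valuations or to $\xi$ being a generalised Puiseux series defining an algebraic curve through $p$. There one must identify $\lim_{t\to\infty}\widehat\mu(\vv{\xi}{t})$ with the appropriate Waldschmidt--type invariant of the limiting (non--quasimonomial) valuation, and one must check that the above Newton--polygon analysis survives the completion procedure of Remark \ref{tree}. Once this limit identification is in place, the $1$-Lipschitz bound on bounded intervals extends automatically to continuity on the whole closed branch, completing the proof.
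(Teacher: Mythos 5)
The paper itself does not prove Theorem~\ref{lower-semicontinuous}; it cites \cite{DHKRS}. What the paper does prove is Proposition~\ref{continuoust}, the restriction of the strong-continuity claim to a single branch $t\mapsto\mm{\xi}{t}$, and that proof is precisely the Newton-polygon/tropical-polynomial argument you use. So for the strong topology your approach is the same as the one in these notes; your weak-semicontinuity argument (a supremum of weakly continuous functions $v\mapsto v(f)/\deg f$ is lower semicontinuous) is the standard one and is almost certainly what the cited reference does.

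Two remarks on the internal logic. First, your invocation of the previously-proved lower semicontinuity "to yield strong continuity at every interior point" is not needed: since the strong topology is the final topology of the branch maps, $1$-Lipschitz continuity of each $\widehat\mu\circ v_\xi$ already gives continuity on the whole tree, branching points included. Second, the statement "taking the supremum over $d$ and over $f$ preserves the Lipschitz constant" silently assumes that the supremum is finite on the whole branch; this is precisely where the argument could fail, and you are right to flag it. The gap you identify is the genuinely nontrivial part. For $\xi$ algebraic, the endpoint of the branch is a curve valuation with $\widehat\mu=\infty$, and those points are excluded from $\cT$ precisely so that $\widehat\mu$ stays real-valued (see Remark~\ref{tree}); for $\xi$ non-algebraic the endpoint $t=\infty$ is in $\cT$, and one must show that the family $\{\vv[f]{\xi}{t}/\deg f\}_f$ is uniformly bounded near $t=\infty$ and that $\widehat\mu$ at the endpoint equals the limit along the branch. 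This does not follow "automatically" from Lipschitz continuity on bounded intervals: a $1$-Lipschitz function on $[1,\infty)$ need not extend continuously to the compactification point, so some uniform estimate at infinity is still required. Until that step is supplied, the strong-continuity half is complete only on the quasimonomial subtree $\cQ$, not on all of $\cT$.
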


If the series $\xi(x)$ is chosen with coefficients general enough
(see \cite{DHKRS} for details), one obtains a
 function $\mm{\xi}{t}$ that is minimal for all values of $t$:
\[\mm{\xi_{\operatorname{general}}}{t}=\min\{\mm{\xi}{t} \,|\, \xi \in \bbC[[x]]\}.\]
This minimal function, that is the same for every sufficiently general choice,
will be denoted by $\widehat\mu(t)=\mm{\xi_{\operatorname{general}}}{t}$.

\begin{corollary}[of proposition \ref{continuoust}]
  The function $\widehat\mu(t)$ is Lipschitz continuous with
Lipschitz constant 1.
 \end{corollary}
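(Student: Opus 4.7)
The plan is to deduce the Lipschitz property of $\widehat\mu(t)$ from Proposition \ref{continuoust} by exploiting the elementary fact that the pointwise infimum of a family of $L$-Lipschitz functions (sharing the same Lipschitz constant $L$) is again $L$-Lipschitz. By the very definition recalled immediately before the corollary, $\widehat\mu(t) = \min_{\xi \in \bbC[[x]]} \mm{\xi}{t}$, so $\widehat\mu(t)$ is realized as such an infimum taken over the parameter $\xi$.

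Concretely, I would fix arbitrary $s, t \in [1, \infty)$ and an arbitrary series $\xi \in \bbC[[x]]$. From the definition of the infimum we have $\widehat\mu(s) \le \mm{\xi}{s}$, and from Proposition \ref{continuoust} applied to $\xi$ we have $\mm{\xi}{s} \le \mm{\xi}{t} + |s-t|$. Chaining these two inequalities gives
\[
\widehat\mu(s) \;\le\; \mm{\xi}{t} + |s-t|.
\]
Since this bound holds for every $\xi$, I take the infimum over $\xi$ on the right-hand side and obtain $\widehat\mu(s) \le \widehat\mu(t) + |s-t|$. Exchanging the roles of $s$ and $t$ gives the reverse inequality, hence $|\widehat\mu(s) - \widehat\mu(t)| \le |s-t|$, which is the desired Lipschitz estimate with constant $1$.

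There is essentially no obstacle: the only point worth checking is that $\widehat\mu(t)$ is finite (so that the inequalities above make sense), but this is immediate from the definition as the value $\mm{\xi_{\operatorname{general}}}{t}$ attained at any sufficiently general $\xi$, which is a concrete nonnegative real number. Note also that the argument does not require the infimum to be attained uniformly in $t$ by a single $\xi$; it works at the level of infima, which is precisely the correct level of generality for Lipschitz bounds to transfer.
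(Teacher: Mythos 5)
Your proof is correct, and it follows essentially the same route the paper intends: the only available tool is Proposition~\ref{continuoust}, and your argument is a clean application of it. The minor difference is the level at which you apply it. The paper defines $\widehat\mu(t)$ as $\mm{\xi_{\operatorname{general}}}{t}$ for a \emph{single} sufficiently general $\xi$ achieving the pointwise minimum for all $t$ simultaneously, so the paper's implicit proof is the one-liner ``apply Proposition~\ref{continuoust} to $\xi=\xi_{\operatorname{general}}$''. You instead work at the level of the infimum $\widehat\mu(t)=\inf_\xi\mm{\xi}{t}$ and invoke the elementary fact that a pointwise infimum of $1$-Lipschitz functions is $1$-Lipschitz. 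Both arguments are valid; yours is marginally more general, as you note, since it does not rely on the existence of a uniform minimizer $\xi_{\operatorname{general}}$ over all $t$ --- a fact the paper asserts (referring to \cite{DHKRS}) but does not prove. That robustness is a small but genuine gain; if one wanted to be pedantic, the paper's direct substitution silently uses a nontrivial input (uniform attainment), whereas your argument uses only the definition of infimum.
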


The behaviour   of the function $\widehat\mu(t)$ is known for small values and also for
square integer values of $t$, by \cite{DHKRS}. Let $F_{-1}=1$, $F_0=0$ and $F_{i+1}=F_i+F_{i-1}$ be the Fibonacci numbers,
and $\phi=(1+\sqrt{5})/2=\lim F_{i+1}/F_i$ the ``golden ratio''.

\begin{theorem}[Dumnicki-Harbourne-Küronya-Roé-Szemberg] \label{muvalues}
The continuous piecewise linear function
defined in table \ref{tablemu} agrees with $\widehat\mu(t)$ in its domain.
\begin{table}
\begin{center}
\renewcommand{\arraystretch}{1.5}
\begin{tabular}{@{}rcccc@{}}
\toprule
& \multicolumn{2}{c}{$\overbrace{\hspace{5cm}}^{i\ge 1\text{ odd}}$} \\
\multirow{2}{*}{$t\in\left[1,7+\frac19\right]\ \Bigg\{$}
& $t\in \left[\frac{F_{i}^2}{F_{i-2}^2},\frac{F_{i+2}}{F_{i-2}}\right]$
& $t\in \left[\frac{F_{i+2}}{F_{i-2}},\frac{F_{i+2}^2}{F_{i}^2}\right]$
& $t\in [\phi^4,7]$
& $t\in \left[7,\left(\frac{8}{3}\right)^2\right]$\\
& $\widehat{\mu}(t)=\frac{F_{i-2}}{F_{i}}\,t$
& $\widehat{\mu}(t)=\frac{F_{i+2}}{F_{i}}$
& $\widehat{\mu}(t)=\frac{1+t}{3}$
& $\widehat{\mu}(t)=\frac{8}{3} $ \\
\midrule
\multirow{2}{*}{$t\sim 7+\frac18\ \Bigg\{$}
& \multicolumn{2}{c}{ %
$t\in \left[\left( {\frac {24+\sqrt{457}}{17}} \right) ^{2},
 7+\frac18\right]$}
& \multicolumn{2}{c}{ %
$t\in \left[7+\frac18,\left( 24-\sqrt {455} \right) ^{2}\right]$}\\
& \multicolumn{2}{c}{ %
$\widehat{\mu}(t)=\frac{7+17\,t}{48}$}
& \multicolumn{2}{c}{ %
$\widehat{\mu}(t)=\frac{121+t}{48}$}\\
\multirow{2}{*}{$t\sim 7+\frac1{7+1/2}\ \Bigg\{$}
& \multicolumn{2}{c}{ %
$t\in \left[\left( {\frac {16+\sqrt {179}}{11}} \right) ^{2},
7+\frac1{7+1/2}\right]$}
& \multicolumn{2}{c}{ %
$t\in \left[7+\frac1{7+1/2},
\left( {\frac {32-\sqrt {177}}{7}} \right) ^{2}\right]$}\\
& \multicolumn{2}{c}{ %
$\widehat{\mu}(t)=\frac{7+11\,t}{32}$}
& \multicolumn{2}{c}{ %
$\widehat{\mu}(t)=\frac{121+7\,t}{64}$}\\
\multirow{2}{*}{$t\sim 7+\frac17\ \Bigg\{$}
& \multicolumn{2}{c}{ %
$t\in \left[\left( \frac{6+\sqrt{22}}{4}\right)^2,7+\frac17\right]$}
& \multicolumn{2}{c}{ %
$t\in \left[7+\frac17,
  \left( 12-\sqrt {87} \right) ^{2}\right]$}\\
& \multicolumn{2}{c}{ %
$\widehat{\mu}(t)=\frac{7+8\,t}{24}$}
& \multicolumn{2}{c}{ %
$\widehat{\mu}(t)=\frac{57+t}{24}$}\\
\multirow{2}{*}{$t\sim 7+\frac{1}{6+1/2}\ \Bigg\{$}
& \multicolumn{2}{c}{ %
$t\in \left[ \left( {\frac {20+\sqrt {218}}{13}} \right) ^{2},
7+\frac{1}{6+1/2}\right]$}
& \multicolumn{2}{c}{ %
$t\in \left[7+\frac{1}{6+1/2},
  \left(\frac{107}{40}\right)^2\right]$}\\
& \multicolumn{2}{c}{ %
$\widehat{\mu}(t)=\frac{14+13\,t}{40}$}
& \multicolumn{2}{c}{ %
$\widehat{\mu}(t)=\frac{107}{40}$}\\
\multirow{2}{*}{$t\sim 7+\frac15\ \Bigg\{$}
& \multicolumn{2}{c}{ %
$t\in \left[\left( \frac{8+\sqrt {29}}{5} \right) ^{2},7+\frac15\right]$}
& \multicolumn{2}{c}{ %
$t\in \left[7+\frac15,
  \left(\frac{43}{16}\right)^2\right]$}\\
& \multicolumn{2}{c}{ %
$\widehat{\mu}(t)=\frac{7+5\,t}{16}$}
& \multicolumn{2}{c}{ %
$\widehat{\mu}(t)=\frac{43}{16}$}\\
\multirow{2}{*}{$t\sim7+\frac14\ \Bigg\{$}
& \multicolumn{2}{c}{ %
$t\in \left[\left(\frac{35}{13}\right)^2,7+\frac14\right]$}
& \multicolumn{2}{c}{ %
$t\in \left[7+\frac14,
  \left( {\frac {35-\sqrt {877}}{2}} \right) ^{2}\right]$}\\
& \multicolumn{2}{c}{ %
$\widehat{\mu}(t)=\frac{13\,t}{35}$}
& \multicolumn{2}{c}{ %
$\widehat{\mu}(t)=\frac{87+t}{35}$}\\
\multirow{2}{*}{$t\sim 7+\frac12\ \Bigg\{$}
& \multicolumn{2}{c}{ %
$t\in \left[\left(\frac{4+\sqrt{2}}{2}\right)^2,7+\frac12\right]$}
& \multicolumn{2}{c}{ %
$t\in \left[7+\frac12,
  \left(\frac{22}{8}\right)^2\right]$}\\
& \multicolumn{2}{c}{ %
$\widehat{\mu}(t)=\frac{7+2\,t}{8}$}
& \multicolumn{2}{c}{ %
$\widehat{\mu}(t)=\frac{22}{8}$}\\
\multirow{2}{*}{$t\sim 8\ \Bigg\{$}
& \multicolumn{2}{c}{ %
$t\in \left[\left( \frac{3+\sqrt {7}}{2} \right) ^{2},8\right]$}
& \multicolumn{2}{c}{ %
$t\in \left[8, \left(\frac{17}{6}\right)^2\right]$}\\
& \multicolumn{2}{c}{ %
$\widehat{\mu}(t)=\frac{1+2\,t}{6}$}
& \multicolumn{2}{c}{ %
$\widehat{\mu}(t)=\frac{17}6$}\\
\midrule
\multicolumn{3}{c}{ %
$t=n^2, n$ an integer}
& \multicolumn{2}{c}{ %
$\widehat{\mu}(n^2)=n$}\\
\bottomrule
\end{tabular}
\end{center}
\caption{\label{tablemu}Piecewise linear function
that agrees with $\widehat\mu$ on each interval.}
\end{table}

\end{theorem}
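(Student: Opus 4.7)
The plan is to establish the equality $\widehat{\mu}(t) = f(t)$, where $f$ denotes the piecewise linear function of Table~\ref{tablemu}, by proving matching lower and upper bounds on each of the intervals tabulated, and invoking the Lipschitz continuity of $\widehat{\mu}$ to glue them together.

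For the lower bound $\widehat{\mu}(t) \geq f(t)$, I would first handle the \emph{anchor points}, i.e., the breakpoints where $f(t) = \sqrt{t}$: these include the square integers $t = n^2$, the Fibonacci points $t = (F_i/F_{i-2})^2$ for odd $i$, the golden-ratio limit $t = \phi^4$, and the irrational anchors $\bigl((24+\sqrt{457})/17\bigr)^2$, $\bigl((16+\sqrt{179})/11\bigr)^2$, etc.\ appearing in the later rows. At each such $t$, Proposition~\ref{volBKMS} combined with $\vol(v_{\xi,t}) = 1/t$ and $L^2 = 1$ gives $\widehat{\mu}(t) \geq \sqrt{t}$ immediately. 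At the intermediate values where $f(t) > \sqrt{t}$, an explicit curve construction is required. The Fibonacci pieces $\widehat{\mu}(t) = (F_{i-2}/F_i)\,t$ and the adjacent constants $\widehat{\mu} = F_{i+2}/F_i$ arise from pulling back a distinguished extremal divisor on the surface obtained by blowing up $\bbP^2$ along the infinitely near cluster prescribed by $v_{\xi,t}$; the Fibonacci structure reflects an iterated Cremona construction growing out of a seed cubic. The isolated breakpoints near $7+1/k$ correspond to further specific effective classes on the appropriate blow-up, tabulated case by case.

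For the upper bound $\widehat{\mu}(t) \leq f(t)$, the key tool is the Lipschitz continuity of $\widehat{\mu}$ with constant $1$ from Proposition~\ref{continuoust} and its corollary: since each linear piece of $f$ has slope at most $1$ and $f$ is pinned to $\sqrt{t}$ at every anchor point, the inequality $\widehat{\mu}(t) \leq f(t)$ on the interior of each interval follows mechanically from the equality $\widehat{\mu}(t_0) = \sqrt{t_0}$ at the anchors together with the Lipschitz bound. Thus the entire theorem reduces to proving the sharp upper bound $\widehat{\mu}(t_0) \leq \sqrt{t_0}$ at each anchor, the matching lower bound being already supplied by BKMS.

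The main obstacle is exactly this anchor-point upper bound. For the square integer anchors $t = n^2$, the required inequality $\widehat{\mu}(n^2) \leq n$ is a Nagata-type statement for a very general quasimonomial valuation, asserting that no plane curve of degree $d$ has $v_{\xi,n^2}$-valuation exceeding $nd$ asymptotically. I would prove it by a Ciliberto--Miranda style degeneration: construct a flat family whose generic fibre is the infinitely near cluster supporting $v_{\xi,n^2}$ and whose special fibre is the configuration of $n^2$ very general simple points in $\bbP^2$ to which Theorem~\ref{Nagata-generic} applies, and use upper-semicontinuity of $\dim[I_m]_d$ along the specialization to transfer Nagata's bound from the special fibre back to the generic one. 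The remaining irrational anchor points are handled similarly by adapting the degeneration to the infinitely near cluster determined by the relevant jet of $\xi$, with the identification of the limiting cluster dictating the precise irrational anchor value that appears in the table.
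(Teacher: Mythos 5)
The central gap is in your upper bound argument. You claim that the Lipschitz continuity of $\widehat\mu$ with constant $1$, combined with the equality $\widehat\mu(t_0)=\sqrt{t_0}$ at the anchors, mechanically yields $\widehat\mu(t)\le f(t)$ on the interior of each interval. This is false: Lipschitz continuity only gives $\widehat\mu(t)\le\sqrt{t_0}+|t-t_0|$, a function of slope $\pm 1$, whereas the linear pieces $\widehat\mu(t)=(F_{i-2}/F_i)\,t$ in the table have slope $F_{i-2}/F_i<1$ for $i\ge 3$, and the constant pieces have slope $0$. For example, on $[4,5]$ the table asserts $\widehat\mu(t)=t/2$; the Lipschitz bound from the anchor at $t_0=4$ gives only $\widehat\mu(t)\le 2+(t-4)$, which at $t=4.5$ is $2.5$ rather than the required $2.25$. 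Even adding monotonicity of $\widehat\mu$ does not repair this.

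You have also missed the key mechanism of the proof, which is Lemma~\ref{negcurves}: if there is an irreducible $f$ with $\vv[f]{\xi}{t} > \deg(f)\sqrt{t}$ (a submaximal curve), then $\widehat\mu(\xi,t)=\vv[f]{\xi}{t}/\deg(f)$ exactly, and conversely such an $f$ exists whenever $\widehat\mu(\xi,t)>\sqrt t$. This lemma eliminates the need for separate lower and upper bounds: once the submaximal curve is exhibited, both directions fall out simultaneously, and $\widehat\mu$ on the whole interval is just the Legendre transform of the Newton polygon of the curve's equation (Remark~\ref{rem:Newton}). The explicit curves are due to Orevkov: a line for $[1,4]$, a conic for $i=3$, and for each odd $i$ a rational cuspidal curve of degree $F_i$ built by iterating degree-$8$ Cremona transformations on the tangent line to $\{y=\xi(x)\}$, with Newton polygon a single segment from $(0,F_{i-2})$ to $(F_{i+2},0)$ (Proposition~\ref{orevkov}). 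Your ``iterated Cremona growing out of a seed cubic'' is not the construction; the seed is a line and the Fibonacci pattern comes from the degree-$8$ Cremona step. Finally, your degeneration argument for the square anchors is a different route from what the paper indicates (Exercise~\ref{squares}: take $\xi$ to be the Taylor series of a smooth degree-$n$ curve), but since your upper bound on the non-anchor pieces is broken the anchor treatment alone cannot rescue the argument.
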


It may be informative to look at the graphical representation of the
known behaviour of $\widehat\mu(t)$ for $t\le 9$ in figure
\ref{graph}.

\begin{figure}
 \includegraphics[width=\linewidth,keepaspectratio=true]{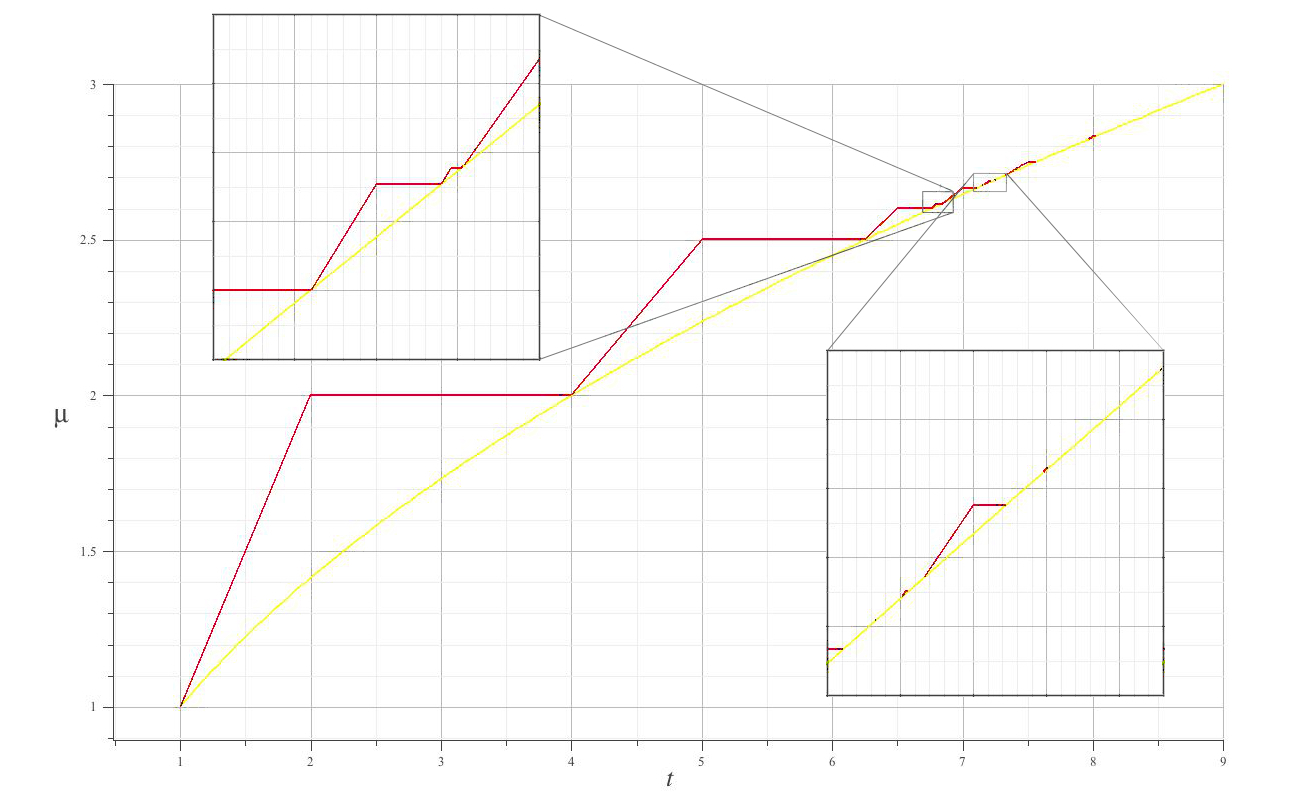}
 \caption{In red, the known behaviour of $\hat\mu(t)$ for $t\le 9$; in yellow, the lower bound
 $\sqrt{t}$. \label{graph}}
\end{figure}

\begin{remark}\label{QuadField}
At the lower endpoints of the intervals included in the left
column of table \ref{tablemu}, and at the upper endpoints of the intervals
in the right column, one has $\widehat\mu(t)=\sqrt{t}$.
Note that all such endpoints given in table
\ref{tablemu} are squares in $\mathbb{Q}$ or in the quadratic
field to which they belong.

In particular there is a sequence of \emph{rational} squares $t<8$ with
$\widehat\mu(t)= \sqrt t$, with an accumulation point at $\phi^4$;
we suspect that $\widehat\mu(t)$ can be computed for at least some rational squares
$t>9$ by existing techniques, that by continuity of
$\widehat \mu$ would allow to compute $\widehat\mu(t)$ for some nonsquare $t$.
\end{remark}

In the next section we will sketch the proof of Theorem \ref{muvalues},
and show the relationship between the partial knowledge we have
on the function
$\widehat\mu$ and the partial knowledge we have on
the Mori cone of the blown up $\bbP^2$.
At this point we already see the first analogy to Nagata's onjecture,
as the last row of table \ref{tablemu} tells us that
 for a sufficiently general choice of $\xi$, and every integer square $t=n^2$,
 the valuation $\vv{\xi}{t}$ is maximal. We will see that the connection is in fact
 stronger than an analogy: the following conjecture, put forward in \cite{DHKRS}
 implies Nagata's conjecture:

\begin{conjecture}\label{mainconj}
 For a sufficiently general choice of $\xi$, and every $t\ge 8+1/36$,
 the valuation $\vv{\xi}{t}$ is maximal.
\end{conjecture}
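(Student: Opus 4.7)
The statement to establish is that for general $\xi$ and every $t \ge 289/36 = 8 + 1/36$, one has $\widehat{\mu}(\vv{\xi}{t}) = \sqrt{t}$, equivalently $\widehat{\alpha}(\vv{\xi}{t}) = 1/\sqrt{t}$. Applying Proposition \ref{volBKMS} with $D = L$ (so $L^2 = 1$) and using $\vol(\vv{\xi}{t}) = 1/t$, we already have the bound $\widehat{\mu}(\vv{\xi}{t}) \ge \sqrt{t}$; only the matching upper bound requires work. This amounts to producing, for each $t$ in the range and each $\varepsilon > 0$, a homogeneous polynomial $F$ of some degree $d$ with $\vv[F]{\xi}{t} \ge d(\sqrt{t} - \varepsilon)$.

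My plan is a two-step reduction in the spirit of Nagata's original argument. First, I would combine the Lipschitz continuity of $\widehat{\mu}$ (Corollary to Proposition \ref{continuoust}) with the local Lipschitz continuity of $\sqrt{t}$ on $[289/36, \infty)$ to reduce the inequality $\widehat{\mu}(t) \le \sqrt{t}$ to a dense subset of the interval; the natural choice is the set of rational squares $t = (p/q)^2$. At such a $t$, superadditivity of the sequence $\mu_d(\vv{\xi}{t})$ (dual to the subadditivity of $\alpha(I_m)$ in Exercise \ref{limits}) gives $\widehat{\mu}(\vv{\xi}{t}) = \sup_d \mu_d/d \ge \mu_p/p$, so it is enough to exhibit a single homogeneous polynomial of degree $p$ with $\vv{\xi}{t}$-value at least $q$. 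Second, I would construct these polynomials by adapting the Ciliberto--Miranda degeneration of Section \ref{ssec:more}: for rational $t$ the valuation $\vv{\xi}{t}$ is proportional to the divisorial valuation along an exceptional divisor produced by a chain of blowups encoded by the continued-fraction expansion of $t$, so imposing a $\vv{\xi}{t}$-value is imposing multiplicities along a cluster of infinitely near points. Degenerating $\bbP^2$ to $\bbP \cup \bbF$ as in the proof of Lemmas \ref{CM-induction}--\ref{CM-base} and splitting the cluster between the two components should reduce the problem to a shorter cluster on one side together with an application of Theorem \ref{thm:chmr} (or Exercise \ref{transform-cremona} for the Cremona reductions) on the other, inducting on the length of the continued fraction down to the base case $t = n^2$ in the last row of Table \ref{tablemu}.

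The main obstacle is fundamental: as noted in the text, Conjecture \ref{mainconj} implies Nagata's conjecture. Indeed, for $t = n^2$ with $n \ge 10$ and sufficiently general $\xi$, the cluster of infinitely near points carrying $\vv{\xi}{n^2}$ has a Waldschmidt invariant that, under appropriate specialization, governs the Waldschmidt constant of $n^2$ very general points in $\bbP^2$. Consequently any complete proof must subsume Nagata's conjecture, and the inductive degeneration sketched above will inevitably bottom out in open Nagata-type subcases. A genuinely new ingredient seems required --- most plausibly a direct construction, on the Zariski--Riemann space of $\bbP^2$, of \emph{good} or \emph{wonderful} rays in the Mori cone of $b$-divisors in the spirit of Conjecture \ref{conj:SCHMR} --- to establish maximality uniformly for $t \ge 289/36$ and thereby rule out the reappearance of the piecewise-linear bumps seen near $t = 8$ in Table \ref{tablemu}.
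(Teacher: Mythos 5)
This is a conjecture, not a theorem: the paper states it without proof (it is attributed to \cite{DHKRS}) and explicitly notes that it implies Nagata's conjecture, which is open. There is therefore no ``paper's own proof'' to compare against, and no blind attempt could succeed.

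Your write-up handles this honestly and competently. The preliminary reductions are sound: $\vol(\vv{\xi}{t})=1/t$, Proposition \ref{volBKMS} with $D=L$ gives $\widehat\mu(\vv{\xi}{t})\ge\sqrt{t}$ unconditionally, so maximality is precisely the upper bound $\widehat\mu(\vv{\xi}{t})\le\sqrt{t}$; and the Lipschitz continuity of $\widehat\mu$ (Proposition \ref{continuoust}) does legitimately reduce the claim to a dense set of rational parameters, where $\vv{\xi}{t}$ is divisorial and the cluster machinery of Section \ref{sec:cluster} applies. The proposed degeneration--plus--Cremona induction along the continued-fraction expansion of $t$ is very much in the spirit of the Ciliberto--Miranda method and of \cite{CHMR13}, so it is a plausible attack. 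But you correctly identify the fundamental obstruction yourself: since the conjecture subsumes Nagata's conjecture (as the paper says explicitly just before the statement), any inductive scheme of this shape must bottom out in open cases --- the piecewise-linear bumps near $t=8$ in Table \ref{tablemu} show that the known submaximal curves of Proposition \ref{orevkov} do not vanish from the picture automatically, and one would need to rule out their analogues for all $t\ge 8+1/36$, which is exactly where the Nagata-type difficulty reappears. In short: no gap in your reasoning, but no proof either, and none should be expected; your assessment matches the paper's own framing of the statement as a genuinely open problem whose resolution would require new ideas (good or wonderful rays, or a cone-theoretic statement on the Zariski--Riemann space as in Section 4).
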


\subsection{The cluster of centers of a valuation}
\label{sec:cluster}

Next we are going to introduce some geometric structures attached to valuations
that allow to study $\widehat \alpha$ and $\widehat\mu$, to prove Theorem
\ref{muvalues}, and motivate the new
extension of Nagata's conjecture.

Each valuation with center at a closed point of a surface $X$
naturally determines a \emph{cluster of centers}, as follows.
To begin with, let $p_1=\cent(v) \in X$.
Consider the blowup $\pi_1:X_1\rightarrow X$
centered at $p_1$ and let $E_1$ be the corresponding exceptional divisor.
The center of $v$ on $X_1$ satisfies $\pi_1(\cent_{X_1}(v))=p_1$,
so it may only be  $E_1$ or a closed point $p_2\in E_1$.

As long as the center is a closed point, the process can be iterated:
blowing up the centers $p_1, p_2, \dots$ of $v$ either ends
with a model where the center of $v$ is an exceptional divisor $E_n$,
in which case
\[
v(f) \,=\, c\cdot \ord_{E_n}f
\]
for some constant $c$ and for every $f\in K(X)$,
and  $v$ is (still) called a divisorial valuation,
or the sequence of blowing up centers goes on indefinitely.
For each center $p_i$ of $v$, let $v_i=v(p_i)=v(E_i)$ be
the value of general curves through $p_i$

Following \cite[Chapter 4]{Cas00}, we call the collection
of points with weights
\[K=(p_1^{v_1}, p_2^{v_2},\dots),\]
the \emph{weighted cluster of points} associated to the valuation
$v$. The cluster $K$ completely determines $v$, because
for every effective divisor $D \subset X$,
\begin{equation}
\label{eq:proximity}
 v(D)=\sum_i v_i\cdot \mult_{p_i} \widetilde D_i,
\end{equation}
 where $\widetilde D_i$ is the
proper transform at $X_i$.  The sum may be infinite, but
$\widetilde D$ can have positive multiplicity at only a finite number
of centers \cite[8.2]{Cas00} (this property is not satisfied
by higher rank valuations, as explained in \cite[8]{Cas00},
but we don't consider such valuations in these notes).

\begin{exercise}
	Prove the equality \eqref{eq:proximity}.\\
	\emph{Hint}: If $\bar D$ is the total transform (pullback)
	after blowing up $p_1$,  then $v(D)=v(\bar D)$, and $$\bar D=\mult_{p_1}(D)E_1+\tilde D.$$
\end{exercise}

Sometimes we shall say that a divisor goes through
an infinitely near point to mean that its proper transform
on the appropriate surface goes through it.

\begin{example}
	Let $C\subset \bbP^2$ be a curve smooth at
	$p=(0,0)$, and $t=n\ge 1$ a natural number. Let
	$\xi\in\bbC[[x]]$ be the Taylor power series locally parameterizing
	$C$. The cluster of centers associated to the quasimonomial
	valuation $v_{\xi,t}$ is $K=(p_1^1,\dots,p_n^1)$, i.e., it
	consists of $n$ points with weights $v_i=1$,
	and the points are determined by the fact that $p_1=p$
	and $C$ goes through each $p_i$.
\end{example}

\begin{definition}
With notation as above, given indices $j<i$, the center $p_i$ is  called \emph{proximate to $p_j$}
($p_i\succ p_j$) if $p_i$ belongs to the proper transform $\widetilde E_{j}$ of the exceptional
divisor of $p_j$. Each $p_i$ with $i>0$ is proximate to $p_{i-1}$ and to
at most one additional center $p_j$,  with $j<i-1$; in this case
$p_i=\widetilde E_{{j}}\cap E_{i-1}$
and $p_i$ is called a   \emph{satellite point}. A point that is not a satellite point is called \emph{free}.
\end{definition}

\begin{remark}
\label{proximate-irrdiv}
  The irreducible components of exceptional divisors
can be computed, writing proper transforms as
combinations of total transforms, if the proximity
relations are known: $\tilde E_j=E_j-\sum_{p_i\succ p_j}E_i$.
\end{remark}

\begin{remark}\label{rm:proximity}
For every valuation $v$, and every center $p_i$ such that $v$ is not the
divisorial valuation associated to $p_i$, equation~\eqref{eq:proximity}
applied to $D=E_{j}$ gives rise to the so-called  \emph{proximity equality}
\[
v_j \,=\, \sum_{p_i \succ p_j} v_i \ .
\]

For effective divisors $D$ on $X$, the intersection number
$\widetilde D \cdot \widetilde E_{j}\ge0$
together with remark \ref{proximate-irrdiv}
yield the proximity inequality
\[
\mult_{p_j}(\widetilde D_j)\ge \sum_{p_i\succ p_j} \mult_{p_i} (\widetilde D_i)\ .
\]
\end{remark}

\begin{example}
	Let $C\subset \bbP^2$ be a curve smooth at
	$p=(0,0)$, and let $t=3/2$. Let
	$\xi\in\bbC[[x]]$ be the Taylor power series locally parameterizing
	$C$. The cluster of centers associated to the quasimonomial
	valuation $v_{\xi,t}$ is $K=(p_1^1,p_2^{1/2},p_3^{1/2})$, where
\begin{itemize}
	\item $p_1=p$,
	\item $p_2=E_1 \cap \tilde C \subset X_1$,
	\item $p_3=E_2 \cap \tilde E_1 \subset X_2$.
\end{itemize}
	In other words, the associated cluster consists of three points,
	the second of which is determined by the degree 1 coefficient of
	$\xi$, and the third is a satellite.
	
	Indeed, by definition of $v_{\xi,t}$, one has
	$v_{\xi,t}(p_1)=1$ and $v_{\xi,t}(C)=t$. For
	$t>1$, \eqref{eq:proximity} applied to $C$ means
	that $\mult_{p_2}\tilde C_1>0$, hence the point $p_2$ is
	as claimed, and in fact for $t=3/2$ one has
	\[1=v(E_1)>v(\tilde C_1)=t-1=1/2,\]
	hence $v(p_2)=1/2$. The determination of $p_3$ with its
	value follows, applying \eqref{eq:proximity} to $E_1$.
\end{example}

\subsubsection*{Valuation divisors and valuation ideals}

Assume now that  $v=\ord_{E_s}$ is the divisorial valuation
with associated cluster $K=(p_1^{v_1},\dots,p_s^{v_s})$, and let
$\pi_K:X_K\rightarrow X$ be the composition of the blowups of
all points of $K$ (in this case, $v_s=1$).
Then,  for every $m>0$,  the valuation ideal sheaf $\cI_m$ can be described as
\[
\mathcal I_m = (\pi_K)_* (\cO_{X_K}(-mE_s))\ .
\]

\begin{remark}\label{UnloadRem}
As soon as $s>1$, the negative intersection number
$-mE_s\cdot \widetilde E_{s-1}=-m$ implies that all global sections of
$\cO_{X_K}(-mE_s)$ vanish along $\widetilde E_{s-1}$,  and therefore
\[
\mathcal I_m = (\pi_K)_* (\cO_{X_K}(-mE_s-\widetilde E_{s-1})) =
(\pi_K)_* (\cO_{X_K}(-E_{s-1}-(m-1)E_s))\ .
\]
This \emph{unloads} a unit of multiplicity from $p_s$ to $p_{s-1}$.
The finite process of subtracting
all exceptional components that are met negatively, (i.e.,
starting from a divisor $D_0=-mE_s$
and successively replacing $D_i$ by $D_i-\widetilde E_j$, starting with $i=0$,
whenever $D_i\cdot \widetilde E_j<0$ for some $j$,
until one obtains a $D_i$ such that
$D_i\cdot \widetilde E_j\geq 0$ for all $j$) is
classically called \emph{unloading the weights of the cluster}.
The final uniquely determined system of weights $\bar m_i$
satisfies a relative nefness property; a divisor is
said to be \emph{nef relative to a morphism} $f$
when it intersects nonnegatively every curve mapping to a point
by $f$ \cite[1.7.11]{Laz04I}. Then
\[
D_{m} = -\sum \bar m_i E_i \ \ \text{ is nef relative to }\pi_K\ .
\]
Moreover,
\[
\mathcal I_m = (\pi_K)_* (\cO_{X_K}(D_m))
\]
and in fact general sections of $\mathcal I_m$
have multiplicity exactly $\bar m_i$ at $p_i$, and  no other
singularity. More precisely,  for any ample divisor
class $A$ on $X$, the complete system $|k(\pi_K)^*A+D_m|$
 for $k\gg0$ is base-point-free,
 its general members are smooth and they meet
each $E_j$ transversely at $\bar m_j- \sum_{p_i \succ p_j} \bar m_i$
distinct points. $D_m$ will be called \emph{valuation divisor}
because of its link with the valuation ideal sheaf.
Note that relative nefness of $D_m$ is equivalent to the proximity
inequality $\bar m_j\ge \sum_{p_i \succ p_j} \bar m_i$.

It follows using \eqref{eq:proximity} that the valuation of an effective
divisor $D$ on $X$ can be computed as a local intersection multiplicity
\[
v(D) = I_{p_1}(D,C)
\]
where $C$ is the image in $X$ of a general element of $|k(\pi_K)^*A+D_m|$.
\end{remark}
\vskip\baselineskip

\begin{exercise}\label{simple}
 Let $v=\ord_{E_s}$ be the divisorial valuation whose associated cluster is
 $K=(p_1^{v_1},\dots,p_s^{v_s})$, where $v_s=1$, and set $m_0=\sum v_i^2$.
 For every $m>0$ let
 $D_m=-\sum \bar m_i E_i$  be the unique nef divisor relative to  $\pi_K$ with
 $\mathcal I_m=(\pi_K)_* (\cO_{X_K}(D_m))$. Then
\[ D_m \le -\frac{m}{m_0} \sum v _i E_i, \]
 and equality holds when the right hand side is an integer divisor.
\end{exercise}

\begin{exercise}
	Any divisor $D$ on $X_K$ supported on $E_1, \dots, E_s$
	may be uniquely written in terms of the exceptional components:
	\[D=\sum c_i \tilde E_i.\]
	The round down of such a divisor is defined as
	\(\left\lfloor D\right\rfloor
	=\sum \lfloor c_i \rfloor \tilde E_i.\)	
Show that in the previous exercise one has
\( D_m= \left\lfloor -\frac{m}{m_0} \sum v _i E_i\right\rfloor. \)
\end{exercise}

The preceding results for $v=\ord_{E_{s}}$ readily extend to
rational valuations to give the following theorem. To state it,
let us say that a divisor on $X_K$ is \emph{contracted}
if it is supported on the exceptional divisors $E_1, \dots, E_s$.

\begin{theorem}\label{thm:valdivisor}
	Let $v$ be a rational quasimonomial valuation (i.e., assume $v(K(X))\subset \bbQ$).
	Then the associated
	cluster $K=(p_1^{v_1},\dots,p_s^{v_s})$ is finite and has rational
	weights $v_i$.
	For every $m\ge 0$ there is a unique contracted divisor $D_m$ on $X_K$,
	nef relative to $\pi_K$ and
	with $\mathcal I_m=(\pi_K)_* (\cO_{X_K}(D_m))$.
	Moreover the contracted $\bbQ$-divisor $D_v$ on $X_K$
	determined by the equalities $D_v\cdot \tilde E_i=0$
	for all $i=1, \dots, s-1$ and $D_v\cdot E_s=\frac{v_s}{\sum v_i^2}$
	(in particular $D_v$ is nef relative to $\pi_K$)
	satisfies
\[ D_m \le {m}D_v, \]
 and equality holds when the right hand side is an integer divisor.	
\end{theorem}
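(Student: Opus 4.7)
The plan is to reduce first to the divisorial case, where the conclusion essentially follows from Exercise~\ref{simple}. Since $v$ is quasimonomial and rational, writing $v=v_{s,t}^{u,w}$ with $s,t\in\bbQ_{>0}$, a Euclidean-algorithm analysis on the ratio $s/t$ (equivalently, iterated standard blowups of the cluster, which correspond to successive continued-fraction steps) terminates after finitely many blowups. At the last step the valuation becomes monomial with one exponent equal to zero, i.e., $v=c\cdot\ord_{E_s}$ for a positive rational $c$ and some exceptional component $E_s$. Consequently the cluster $K=(p_1^{v_1},\dots,p_s^{v_s})$ is finite, and each $v_i=v(p_i)$ is a positive rational (for $\ord_{E_s}$, it equals the multiplicity at $p_i$ of a general curve through the whole cluster; scaling by $c$ preserves rationality).

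Next I would construct $D_v$ directly from the intersection conditions. On $X_K$, the total transforms $E_1,\dots,E_s$ form an orthogonal basis of the lattice of contracted divisors with $E_i\cdot E_j=-\delta_{ij}$, so the intersection matrix is negative definite and the linear system $D_v\cdot\tilde E_i=0$ for $i<s$ together with $D_v\cdot E_s=v_s/m_0$, with $m_0=\sum v_i^2$, has a unique solution. Writing $D_v=-\sum a_i E_i$ and expanding $\tilde E_i=E_i-\sum_{p_k\succ p_i}E_k$ via Remark~\ref{proximate-irrdiv}, these conditions become $a_s=v_s/m_0$ and $a_i=\sum_{p_k\succ p_i}a_k$ for $i<s$; by the proximity equalities of Remark~\ref{rm:proximity} ($v_i=\sum_{p_k\succ p_i}v_k$), the choice $a_i=v_i/m_0$ solves this system, giving the explicit formula $D_v=-\frac{1}{m_0}\sum v_i E_i$. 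Relative nefness of $D_v$ then holds by construction: $D_v\cdot\tilde E_i=0$ for $i<s$ while $D_v\cdot\tilde E_s=v_s/m_0>0$ (since no point of $K$ is proximate to $p_s$).

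For the normalized divisorial case $v=\ord_{E_s}$, Exercise~\ref{simple} together with its round-down refinement yields $D_m=\left\lfloor -\frac{m}{m_0}\sum v_iE_i\right\rfloor=\lfloor mD_v\rfloor$, so $D_m\le mD_v$ with equality precisely when $mD_v$ is integral. The general case $v=c\cdot\ord_{E_s}$ follows from this via the identity $\cI_m(v)=\cI_{m/c}(\ord_{E_s})$, since the rescaling $v_i\mapsto cv_i$, $m_0\mapsto c^2m_0$ leaves the right-hand side $-\frac{m}{m_0}\sum v_iE_i$ invariant. Uniqueness of $D_m$ as a contracted divisor nef relative to $\pi_K$ with prescribed pushforward follows from Remark~\ref{UnloadRem}: general sections of $(\pi_K)_*\cO_{X_K}(D_m)$ realise the multiplicities $\bar m_i$ dictated by $D_m$ at each $p_i$, and the unloading process converges to the unique relative-nef representative. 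The main obstacle will be making the first paragraph's reduction rigorous, as it requires either invoking Zariski's theory of complete ideals on smooth surfaces or explicitly tracking the Euclidean/continued-fraction algorithm through successive blowups to verify termination, finiteness of the cluster, and rationality of all the weights.
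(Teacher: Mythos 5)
Your proposal is correct and follows essentially the same route as the paper: reduce to the normalized divisorial case $\ord_{E_s}$ via the multiplicative constant $c=v_s$, then apply the explicit formula from Exercise~\ref{simple} and the round-down refinement, with the relative Zariski/unloading discussion of Remark~\ref{UnloadRem} supplying existence and uniqueness of $D_m$. The paper disposes of the finiteness and rationality of the cluster by citing \cite[8.2]{Cas00} rather than unwinding the continued-fraction bookkeeping, and does not spell out the closed form $D_v=-\tfrac{1}{m_0}\sum v_iE_i$ which you derive cleanly from the proximity equalities; your acknowledged gap and the citation are resolving the same point, and your rescaling sentence should read $m\mapsto m/c$, $v_i\mapsto v_i/c$ (the direction passing to $\ord_{E_s}$) for the invariance of $-\tfrac{m}{m_0}\sum v_iE_i$ to come out verbatim.
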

\begin{proof}
We refer to \cite[8.2]{Cas00} for the finiteness of the associated
cluster $K$. Then $K$ differs from the cluster associated to the
divisorial valuation $v_{E_s}$ in the multiplicative constant $v_s$
for all values, and the claims follow from the discussion above.
\end{proof}

\begin{exercise}\label{volume_from_cluster}
  Let $v$ be a divisorial valuation with associated cluster
 $K=(p_1^{v_1},\dots,p_s^{v_s})$. Then
\[
\vol (v)\,=\,\Big(\sum v_i^2\Big)^{-1}\ .
\]
\emph{Hint}: Use the \emph{codimension formula} \cite[4.7.1]{Cas00}.
\end{exercise}

Consider the group of numerical equivalence classes of $\bbR$-divisors
$N_1(X_K)$, 
and the Mori cone $\Mor(X_K)\subset N_1(X_K)$. Theorem \ref{thm:valdivisor}
allows to rephrase the definition of $\widehat\alpha$ and
$\widehat\mu$ as follows. Assume that $v$ is a rational valuation
on the projective smooth surface $X$. Then clearly
\begin{gather}\label{mu-Mor}
\widehat\alpha_D(v)=\max
\{\delta\in \bbR \,|\, \delta \pi_K^*(D)+D_v \in \Mor(X_K)\},\\
\widehat\mu_D(v)=\min
\{\epsilon\in \bbR \,|\, \pi_K^*(D)+\epsilon D_v \in \Mor(X_K)\}.
\end{gather}

\begin{exercise}\label{ex:muvol}
Prove Proposition \ref{volBKMS} for rational valuations on surfaces,
using \eqref{mu-Mor}.
\end{exercise}

 In cases when $\Mor(X_K)$ is a rational polyhedral cone,
 \eqref{mu-Mor} yields that $\widehat \mu_D (v)$ is
 a rational number, and therefore $v$ can be
 maximal only if $\sqrt{D^2/\vol(v)}$ is rational.
 In fact, all examples known of divisorial maximal valuations
 correspond to rational values of $\sqrt{D^2/\vol(v)}$,
 even for nonpolyhedral $\Mor(X_K)$.
For some examples of non-divisorial maximal valuations,
see Remark \ref{QuadField}.

 Quasimonomial valuations are exactly the valuations  whose associated cluster consists
of a few free points followed by satellites, that may be finite or infinite
in number, but not infinitely many proximate to the same center.
We are interested in \emph{very general quasimonomial valuations} on $\bbP^2$ (see \cite{DHKRS} and also \cite{GMM});
we linked the genericity condition to the coefficients of the
series $\xi$ used to define the quasimonomial valuations,
but it can be translated by saying that the free center points
of the associated cluster are general in the exceptional divisors
where they belong.

\begin{remark}\label{ContFracEx}\label{ClusterDivRem} \cite{Cas00}
The cluster $K$ of centers of $\vv{\xi}{t}$ can be easily described from
the continued fraction expansion
\[
t=n_1+\frac{1}{n_2+\frac{1}{n_3+\frac{1}{\ddots}}}\ .
\]
 $K$ consists of $s=\sum n_i$ centers; if $t=n_1$ then
they all lie on the proper transform of the germ
\[
\Gamma \colon \big\{ y = \xi(x)\big\}\  ,
\]
otherwise the first $n_1+1$ lie
on $\Gamma$ and the rest are satellites: starting from $p_{{n_1}+1}$ there
are $n_2+1$ points proximate to $p_{n_1}$, the last of which starts
a sequence of $n_3+1$ points proximate
to $p_{n_1+n_2}$ and so on. If the continued fraction is finite, with
$r$ terms, then the last $n_r$ points (not $n_r+1$) are proximate to
$p_{n_1+\dots+n_{r-1}}$.
The weights are
$$
v_i = \left\{
\begin{array}{lcl}
 1 & {\rm if} & 1\leq i \leq n_1,\\
 t-n_1 & {\rm if} & n_1+1\leq i \leq n_1+n_2,\\
v_{n_1+\dots+n_{j-1}}-n_j v_{n_1+\dots+n_{j}}& {\rm if} & n_1+\dots+n_{j}+1\leq i \leq n_1+\dots+n_{j+1}
\end{array}.
\right.
$$

If $t$ is rational, there are
only finitely many coefficients $n_1,\ldots,n_r$,
so the associated cluster 
is finite with rational weights and the valuation is
divisorial.
If $t$ is irrational, then the sequence of centers is infinite,
the group of values has rational rank 2, and there is no surface $X_K$.

\end{remark}
\vskip\baselineskip

\begin{exercise}
\label{squares}
 If $t=n^2$ is the square of an integer, then
 a very general quasimonomial valuation
 $\vv{\xi}{t}$ is maximal.\\
 \emph{Hint}: by the generality assumption, it is enough to
 prove maximality for some choice of $\xi$. Consider a smooth
 curve of degree $n$ and its Taylor series.
\end{exercise}

\subsubsection*{Submaximal curves}
We end this section by showing how to prove Theorem \ref{muvalues}.
For all values of $t$ where $\widehat\mu(t)>\sqrt{t}$ there must
exist some curve $C$ with $m=v_{\xi,t}(C)>d \sqrt{t}$.
In other words, $\alpha(I_m)$ is smaller than expected
because of the equation $f\in I_m$ of $C$. The curve $C$ is said
to be \emph{submaximal}.

\begin{lemma}\label{negcurves}
	If there is an  irreducible
	polynomial $f\in \bbC[x,y]$  with
	\[
	\vv[f]{\xi}{t}>\frac{1}{\sqrt{\vol(\vv{\xi}{t})}}\deg(f)\ ,
	\]
	then  $\vv[f]{\xi}{t}=\mm{\xi}{t} \deg(f)$.
	
	Moreover, if $\mm{\xi}{t}>\frac{1}{\sqrt{\vol(\vv{\xi}{t})}}$,
	then there is such an irreducible
	polynomial $f$.
\end{lemma}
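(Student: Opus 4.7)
The plan is to establish the first statement by showing that the strict transform of $f$ on a suitable blowup resolving $f$ together with any given test polynomial $g$ is an irreducible curve with negative self-intersection, and then exhibiting a nef class orthogonal to it which forces $v(g)/\deg g \le v(f)/\deg f$. Given $f$ irreducible of degree $d$ with $m := \vv[f]{\xi}{t} > d/\sqrt{\vol(\vv{\xi}{t})}$, and any polynomial $g$ of degree $d'$, choose $\pi_K : X_K \to \bbP^2$ to resolve both $f$ and $g$; denote the corresponding cluster centers by $p_1, \ldots, p_s$ with weights $v_1, \ldots, v_s$, and use $\pi_K^*L, E_1, \ldots, E_s$ (total transforms) as an orthogonal basis of $N^1(X_K)$ with $(\pi_K^*L)^2 = 1$ and $E_i \cdot E_j = -\delta_{ij}$. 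Writing $[\tilde f] = d\pi_K^*L - \sum \mu_i E_i$, the proximity formula gives $m = \sum v_i \mu_i$; Cauchy--Schwarz together with the bound $M := \sum v_i^2 \le 1/\vol(\vv{\xi}{t})$ from Exercise \ref{volume_from_cluster} implies
\[ \sum \mu_i^2 \;\ge\; \frac{m^2}{M} \;>\; d^2, \]
hence $[\tilde f]^2 = d^2 - \sum \mu_i^2 < 0$.

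Next, set $T := M^{-1/2} \sum v_i E_i$; then $T^2 = -1$, $T \cdot \pi_K^*L = 0$, and for any polynomial $h$ resolved on $X_K$ with strict transform $[\tilde h] = d_h \pi_K^*L - \sum \mu_i^h E_i$ one computes $T \cdot [\tilde h] = \vv[h]{\xi}{t}/\sqrt{M}$. With $\gamma := d\sqrt{M}/m$, the hypothesis $m/d > 1/\sqrt{\vol(\vv{\xi}{t})} \ge \sqrt{M}$ gives $\gamma \in (0,1)$, and the class $\Lambda := \pi_K^*L - \gamma T$ satisfies $\Lambda^2 = 1 - \gamma^2 > 0$ and $\Lambda \cdot \pi_K^*L = 1 > 0$. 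Therefore $\Lambda$ lies in the positive cone of $N^1(X_K)$, which is contained in $\Nef(X_K)$ by self-duality of the Lorentzian intersection form on a surface. By construction $\Lambda \cdot [\tilde f] = d - \gamma m/\sqrt{M} = 0$, and nefness of $\Lambda$ applied to the effective class $[\tilde g]$ gives
\[ 0 \;\le\; \Lambda \cdot [\tilde g] \;=\; d' - \gamma \cdot \vv[g]{\xi}{t}/\sqrt{M}, \]
so $\vv[g]{\xi}{t}/d' \le \sqrt{M}/\gamma = m/d$. Since $g$ was arbitrary, $\mm{\xi}{t} \le m/d$, and as the reverse inequality is immediate from the definition of $\widehat\mu$ as a supremum, equality holds.

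For the second assertion, by Exercise \ref{limits} we have $\mm{\xi}{t} = \sup_F \vv[F]{\xi}{t}/\deg F$, so the inequality $\mm{\xi}{t} > 1/\sqrt{\vol(\vv{\xi}{t})}$ yields some polynomial $F$ with $\vv[F]{\xi}{t}/\deg F$ strictly above the threshold. Factor $F = f_1\cdots f_k$ into irreducibles; the ratio $\vv[F]{\xi}{t}/\deg F$ is the $\deg(f_j)$-weighted average of the ratios $\vv[f_j]{\xi}{t}/\deg(f_j)$, so at least one irreducible factor must itself exceed $1/\sqrt{\vol(\vv{\xi}{t})}$.

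The main subtlety is the irrational case of $t$, where the cluster of centers is infinite and no single model $X_K$ resolves the situation globally. The argument above sidesteps this by resolving only the pair $(f, g)$ under consideration on a finite model; the uniform bound $M \le 1/\vol(\vv{\xi}{t})$ keeps the nef-class construction valid on every such model, so the inequality $\vv[g]{\xi}{t}/\deg g \le \vv[f]{\xi}{t}/\deg f$ holds for each $g$ separately even though no single surface sees the entire cluster at once.
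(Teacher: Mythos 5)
Your argument for the second assertion (factoring $F$ into irreducibles and using that $\vv[F]{\xi}{t}/\deg F$ is a weighted average of the ratios for the factors) is correct and matches the paper. However, the proof of the first assertion has a genuine gap.

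The fatal step is the claim that the positive cone of $N^1(X_K)$ is contained in $\Nef(X_K)$. The inclusion goes the other way: on any smooth projective surface one has $\Nef(X_K)\subseteq\overline{\cQ}\subseteq\Mor(X_K)$ (see the exercise following Corollary \ref{nonclosed-ne}), and these inclusions are strict as soon as there is an irreducible curve of negative self-intersection. Indeed, your own computation produces such a curve: you show $[\tilde f]^2<0$, so $[\tilde f]\notin\overline{\cQ}=\cQ^\vee$, hence some class in $\cQ$ pairs strictly negatively with $[\tilde f]$. In fact the nefness of your class $\Lambda=\pi_K^*L-\gamma T$ against the strict transform $\tilde C$ of an arbitrary plane curve $C$ unwinds precisely to the inequality $\vv[C]{\xi}{t}/\deg C\le m/d$ that you are trying to prove; so the argument is circular. (A cleaner way to see the problem: if another irreducible $g$ with a strictly larger ratio existed, then $\tilde g$ would be an actual curve on $X_K$ with $\Lambda\cdot\tilde g<0$, showing that $\Lambda$ is not nef — your construction gives no independent reason to exclude this.) One cannot even fall back on the Hodge index theorem here: you have $[\tilde g]^2<0$ as well, so $[\tilde g]$ lies outside the light cone and no sign information on $\Lambda\cdot[\tilde g]$ follows from $\Lambda^2>0$, $\Lambda\cdot[\tilde f]=0$, $[\tilde f]\cdot[\tilde g]\ge0$.

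What is actually needed, and what the paper uses, is a lower bound on the local intersection multiplicity at $p$ rather than an upper bound from a nef class: Exercise~\ref{generic-intersection} gives $I_p(f,g)\ge v(f)v(g)/\sum v_i^2 = v(f)v(g)\vol(v)$ (essentially by unloading and passing to general members of the valuation linear systems), while Bezout gives $I_p(f,g)\le \deg f\cdot\deg g$ if $f\nmid g$; when both $v(f)/\deg f$ and $v(g)/\deg g$ exceed $1/\sqrt{\vol(v)}$, these two bounds contradict each other, forcing $f\mid g$. The reduction to rational $t$ by continuity of $\widehat\mu$ is also the device the paper uses to avoid the infinite-cluster issue, which is a cleaner route than resolving each pair $(f,g)$ on a separate finite model.
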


In the case above we  say that $f$ (or the curve $\{f=0\}$) computes
$\mm{\xi}{t}$. Since for any given $f$, the function $\vv[f]{\xi}{t}$ is
 concave and piecewise linear, the subset of $t\in \bbR$
such that $\vv[f]{\xi}{t}=\mm{\xi}{t} \deg(f)$ is always a closed
interval, and if nonempty (i.e., if $f=0$ is a submaximal
curve for some value of $t$), each endpoint of this interval
corresponds to a maximal valuation $\vv{\xi}{t}$.
Each pair of linear functions in
table \ref{tablemu} is determined by submaximal curve
that is submaximal in the union of the corresponding pair
of intervals.

\begin{proof} [Proof of Lemma\ref{negcurves}]
	By continuity of $\mm{\xi}{t}$ as a function of $t$, it
	is enough to consider the case $t\in \bbQ$. Let $v=\vv{\xi}{t}$.
	
	Let $f$ be as in the claim, and $d=\deg f$.
	It will be enough to prove that, for every polynomial $g$ with
	degree $e$ and $v(g)=w>\frac{e}{\sqrt{\vol(v)}}$, the polynomial $f$ divides $g$.
	So assume by way of contradiction that $f$ does not divide $g$,
	and compute the local intersection multiplicity
	\[I_p(f,g)=\dim_\bbC \frac{\cO_{\bbP^2,p}}{(f,g)}.\]

	Choose an integer $k$ such that
	$kw\in \bbN$ is an integer multiple of $t$, and consider the ideal
	$$I_{kw}=\{h\in \bbC[x,y]\,|\,v(h)\ge kw\}.$$
	Since obviously $g^k\in I$, the computation in Exercise
	\ref{generic-intersection} below shows that
	$$I_p(g^k,f)\ge kwv(f)>\frac{kwd}{\sqrt{t}}$$
	$$
	I_p(g,f)>\frac{wd}{\sqrt{t}}=dw\sqrt{\vol(v)}>de,
	$$
	so $f$ is a component of $g$.
	
	Now assume $\hat\mu(v)>\frac{1}{\sqrt{\vol(v)}}$. So there is a polynomial
	$g\in \bbC[x,y]$ of degree $e$ with $v(g)>\frac{e}{\sqrt{\vol(v)}}$.
	Since $v(f_1 \cdot f_2)=v(f_1)+v(f_2)$, it follows that
	at least one irreducible component $f$ of $g$,
	satisfies $v(f)>\frac{\deg f}{\sqrt{\vol(v)}}$.
\end{proof}

\begin{exercise}
	\label{generic-intersection}
	Let $f\in I_m$ and  $g\in I_n$. Then
	\[I_p(f,g)\ge\frac{\sum_{i=1}^s(mv_i)(nv_i)}
	{\left(\sum_{i=1}^s v_i^2\right)^2}.\]
	\emph{Hint}: Using linearity with respect to $m$ and $n$
	and Theorem \ref{thm:valdivisor}, reduce to the case when
	$\cI_{m}=(\pi_K)_*(\cO_{X_K}(mD_v))$. Then by
	Exercise \ref{simple} the claim is equivalent to Exercises
	4.13, 4.14 of \cite{Cas00}.
\end{exercise}

The existence of the submaximal curves needed to prove
Theorem \ref{muvalues} is due to Orevkov \cite{Ore02}.
For the intervals $[1,2]$ and $[2,4]$ (i.e., $i=1$ in the
first two rows of table \ref{tablemu}) the curve is
just the line tangent to $\{y=\xi(x)\}$. For the
intervals corresponding to $i=3$, the curve is a conic.
In general, the submaximal curve that gives the
$i$-th pair of linear functions, described in
Proposition \ref{orevkov} below, is built by applying
a sequence of Cremona transformations of degree 8
to the line tangent to $\{y=\xi(x)\}$.

\begin{exercise}
Given any power series $\xi(x)=\sum_{i\ge 1} a_i x^i$,
let $C$ be the line tangent to  $\{y=\xi(x)\}$,
namely $C:\{y-a_1 x=0\}$. Show that $C$
is submaximal for $\vv{\xi}{t}$ with $t\in (1,4)$
and compute $\mm{\xi}{t}$ in this range.
\end{exercise}

\begin{proposition}\label{orevkov}
Assume a power series $\xi(x)=\sum_{i\ge 1} a_i x^i$
is given with the coefficients $a_1,\dots,a_6$
very general.
 For each odd $i\ge 1$, there is a rational curve $C_i$
 with the following properties:
 \begin{enumerate}
 \item $\deg C_i=F_{i}$.
 \item $C_i$ has a single cuspidal singularity at $p$.
 \item The Newton polygon of its equation
 (with respect to coordinates $(x,w)$ as in remark \ref{rem:Newton})
 consists of a unique segment, with vertices
 $(0,F_{i-2})$ and $(F_{i+2},0)$.
 \end{enumerate}
 \noindent  Let $K_i$ be the weighted cluster associated
  to the valuation $\vv{\xi}{t_i}$ with $t_i=F_{i+2}/F_{i-2}$, and let
  $\pi_i:X_{K_i}\rightarrow \bbP^2$ be the blow up of all points
  of $K$. Then:
 \begin{enumerate}\addtocounter{enumi}{3}
 \item $\pi_i$ is an embedded resolution of $C_i$.
 \item The strict transform $\tilde C_i\subset X_{K_i}$
 is a $(-1)$-curve.
 \item $C_i$ is submaximal for $t$
  in the interval $\left(\frac{F_{i}^2}{F_{i-2}^2},\frac{F_{i+2}^2}{F_{i}^2}\right)$.
 \end{enumerate}
\end{proposition}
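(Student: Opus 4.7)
I would proceed by induction on odd $i\ge 1$. For the base case $i=1$, the curve $C_1=\{y-a_1x=0\}$ is the tangent line to $y=\xi(x)$ at $p$; it is trivially smooth rational of degree $F_1=1$. Written in the formal coordinates $(x,w)$ with $w=y-\xi(x)$, its equation becomes $w+\sum_{j\ge 2}a_jx^j=0$, whose Newton polygon has vertices $(0,1)=(0,F_{-1})$ and $(2,0)=(F_3,0)$, using $a_2\ne 0$. The cluster $K_1$ associated to $\vv{\xi}{t_1}$ with $t_1=F_3/F_{-1}=2$ consists of two free points of weight $1$, and the proper transform $\tilde C_1\subset X_{K_1}$ has class $(1;1,1)$ with selfintersection $-1$, so properties (1)--(6) hold by direct inspection.

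For the inductive step, rather than trying to build $C_i$ directly from $C_{i-2}$, I would first pin down the expected numerical class on $X_{K_i}$, namely $\xi_i=(F_i;v_1,\dots,v_s)\in\Pic X_{K_i}$, where $(v_j)$ are the weights of $K_i$ as described in Remark \ref{ContFracEx}. The proximity structure of $K_i$ follows the Fibonacci recursion, and one verifies $\xi_i^2=\xi_i\cdot K=-1$ via Cassini--type identities. I would then invoke Hudson's test (Theorem \ref{hudson-test}): each arithmetic Cremona transformation based at the three centers of largest weight strictly decreases the degree, and by a nested induction the reduction mirrors the Euclidean-style descent of $(F_{i+2},F_{i-2})$ to $(F_i,F_{i-2})$, eventually terminating at a permutation of $(0;-1,0^{s-1})$. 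This proves $\xi_i$ is $1$-connected, and for very general $a_1,\dots,a_6$ there is an irreducible rational plane curve $C_i$ realizing this class.

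Granted the existence of $C_i$, properties (1) (degree), (2) (unique cusp), (4) (embedded resolution) and (5) ($\tilde C_i$ is a $(-1)$-curve) follow from general principles: $\tilde C_i$ is smooth on $X_{K_i}$ and meets each exceptional component transversely with multiplicity prescribed by the cluster weights, while the linear ordering of $K_i$ with its specific proximity pattern forces a single cuspidal analytic branch at $p$. The Newton polygon assertion (3) is obtained by translating the weights $v_j$ into $x$- and $w$-exponents: the branches tangent to $\{x=0\}$ and $\{w=0\}$ contribute the exponents $F_{i+2}$ and $F_{i-2}$ respectively, inverting the construction of Remark \ref{rem:Newton}. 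For submaximality (6), the Newton polygon yields
\begin{equation*}
\vv[C_i]{\xi}{t}=\min\{tF_{i-2},F_{i+2}\},
\end{equation*}
and dividing by $\deg C_i=F_i$ shows $\vv[C_i]{\xi}{t}/F_i>\sqrt{t}$ exactly when $t\in\bigl(F_i^2/F_{i-2}^2,\,F_{i+2}^2/F_i^2\bigr)$, at which point Lemma \ref{negcurves} delivers submaximality throughout that interval.

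The main obstacle is the inductive step: although the numerical $(-1)$-class $\xi_i$ exists unconditionally and Hudson's test gives $1$-connectedness, promoting this to an actual irreducible plane curve requires controlling the geometry of the cluster $K_i$, which depends in a subtle way on the full series $\xi$. This is where the ``very general $a_1,\dots,a_6$'' hypothesis must be used, and a uniform statement for all odd $i$ simultaneously is delicate. Orevkov's original argument in \cite{Ore02} circumvents the difficulty by exhibiting an explicit birational self-map of $\bbP^2$ of degree $8$ (the Cremona transformations alluded to after the theorem statement) that sends $C_1$ to $C_3$ and iterates correctly, so that all coefficient conditions reduce to genericity of $a_1,\dots,a_6$ alone. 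I expect the cleanest route to a complete proof is to follow this explicit Cremona construction rather than the abstract approach via Hudson's test.
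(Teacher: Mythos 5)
The paper does not actually prove Proposition~\ref{orevkov}: it is attributed to Orevkov~\cite{Ore02}, with the one-line indication preceding the statement that the curves $C_i$ are obtained by applying a sequence of degree-$8$ Cremona transformations of $\bbP^2$ to the tangent line of $\{y=\xi(x)\}$. Your base case $i=1$ is correct in every detail (degree, Newton polygon, cluster $K_1$, $(-1)$-class), and your derivation of submaximality~(6) from the Newton polygon via $\vv[C_i]{\xi}{t}=\min\{tF_{i-2},F_{i+2}\}$ together with $\vol(\vv{\xi}{t})=1/t$ and Lemma~\ref{negcurves} is exactly right --- it is a self-contained computation once (1)--(3) are in hand.

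The gap is in your proposed inductive step, and you do well to flag it yourself. Theorem~\ref{hudson-test} (Hudson's test) is stated in Section~2 for blow-ups of $n\ge 3$ \emph{distinct} points of $\bbP^2$ in Cremona general position; the arithmetic Cremona transformations that drive the descent in condition~(3) of that theorem are shadows of actual plane Cremona maps, which require three non-collinear base points at each step. The cluster $K_i$ attached to $\vv{\xi}{t_i}$ contains satellite points whose positions are forced by proximity (they sit at intersections of exceptional components, see Remark~\ref{ContFracEx}), so the three centers of largest weight at a given stage are typically \emph{not} three general points of the plane: they are infinitely near, and the corresponding ``Cremona transformation'' exists only formally. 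There is a cluster-theoretic analogue of Hudson's test in the language of \cite{Cas00}, but the paper's Theorem~\ref{hudson-test} does not apply as written, and the inference from numerical $(-1)$-class and $1$-connectedness to an actual irreducible cuspidal curve realizing the class breaks down precisely here. The genericity of $a_1,\dots,a_6$ would also have to be injected at a specific step of such a descent, which your abstract scheme leaves unsaid. Your final assessment is correct: the clean route is Orevkov's explicit construction, iterating a fixed degree-$8$ plane Cremona map that carries $C_1$ to $C_3$ and preserves the shape of the cluster; that is precisely the route the paper's remark points to, and it is what replaces the abstract Hudson-test argument.
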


\section{Cones of b--divisors}

At the end of the preceding section it became clear that
Nagata-type statements for valuations and for extremal rays
of the Mori cone are connected, beyond simple analogy. However,
from the perspective of Mori cones, the values taken by
the Waldschmidt function $\widehat\mu$ on different parameters
$t$ (or different valuations $v_{\xi,t}$ in the valuative tree)
appear to be unrelated, as they correspond to different
blown up surfaces. In particular, the piecewise linear nature of
the known parts of the Waldschmidt function, and the quadratic
nature of its conjectural parts, show striking analogies with
the (known and conjectural) shape of the Mori cones, with no
satisfactory explanation at this point. This last section
is an attempt at giving such an explanation for the existing deep connection,
in Shokurov's language of b--divisors. This is joint work in
progress of the first author with S.~Urbinati \cite{RU17}.

\subsection{Zariski--Riemann space and b--divisors}
\emph{Birational divisors}, or simply b--divisors,
were introduced by V.~V.~Sho\-ku\-rov in the context of the Minimal Model Program,
see \cite{Sho03}. We next review some basic facts about them,
addressing the reader to \cite{BdFF}, \cite{Isk03}, \cite{Cor07} for details.

Given a normal projective variety $X$ (for our purposes, $X=\mathbb{P}^2$)
consider the set
\[
\{\pi:X_{\pi}\rightarrow X  \text{ birational morphism}\}\,/\cong
\]
of isomorphism classes
($\cong$ denotes isomorphisms $X_\pi \cong X_{\pi'}$
commuting with $\pi$ and $\pi'$)
of \emph{birational models} of $X$. This set is partially ordered
by setting $\pi_1\geq\pi_2$ if $\pi_1$ factors through $\pi_2$. This
order is inductive, i.e. any two proper birational morphisms to $X$ can be dominated by a
third one. The  \emph{Riemann-Zariski space of} $X$
is the projective limit
\[
\mathfrak{X}=\lim_{\leftarrow}\{X_\pi\rightarrow X \text{ birational morphism}\}\,/\cong
\]
 in the category of locally ringed topological spaces,
 each $X_\pi$ being viewed as a scheme with its Zariski topology
 and structure sheaf $\cO_{X_\pi}$.
 As a topological space $\mathfrak{X}$ is quasi-compact.

By a well known theorem of Zariski \cite[VI,\S17]{ZS75II} the stalks of the
structure sheaf of $\mathfrak{X}$ are exactly the
valuation rings of $K(X)$ containing $\bbC$,
so there is a natural bijection
\begin{equation*}
 \mathfrak{X} \longleftrightarrow \{ \text{valuations on } K(X)
 \text{ trivial on } \bbC\}
\end{equation*}
 The topology induced on the set of valuations
 admits as a basis of open sets the subsets of the form
 \[
 U_{f_1,\dots,f_k}=\{ v \text{ valuation such that } v(f_i)\ge 0 \, \forall i\}, \qquad \text{where }f_i\in K(X),
 \]
 or in other words, the subsets consisting of those valutations
 whose valuation rings contain a given finite subset of $K(X)$.
 The locally ringed space structure  is given by assigning to any open
 subset the intersection of the valuation rings of the valuations of the
 subset.

A Weil divisor $\overline{W}$ on $\mathfrak{X}$ is defined to be
a collection
of divisors $W_\pi\in \Div (X_\pi)$, one on each birational model
$\pi: X_\pi\rightarrow X$, compatible under
push-forward, that is,  $\mu_*W_{\pi} = W_{\pi'}$
if $\pi=\mu\circ\pi'$.
The element $W_\pi$ of the collection $\overline{W}$
is called \emph{trace} of $\overline{W}$ on $X_\pi$.
The group of Weil divisors on $\mathfrak{X}$ is therefore
$$
\Div (\mathfrak{X})= \lim_{\leftarrow}\{\Div (X_\pi)\}
$$
where the arrow refers to push--forwards of the divisors.

On the other hand, a Cartier divisor $\overline D$  on $\mathfrak{X}$
is a Weil divisor for which there is a model $X_0$ such that for every
other model $X_{\pi}$ dominating $X_0$, the trace
$D_\pi$ of $\overline D$ on $X_\pi$ is the pull-back of the trace $D_0$.
Thus the group of Cartier divisors $\mathfrak{X}$
is also a limit of groups of Cartier divisors, but under
pullbacks rather than pushforwards.
$$
\CDiv (\mathfrak{X})= \lim_{\rightarrow}\{\CDiv (X_n)\}.
$$
In particular, a Cartier divisor $D_\pi$ on a model $X_\pi$ defines a Cartier divisor  $\overline{D}$ on $\mathfrak{X}$, by pulling back $D_\pi$ on all models dominating $X_\pi$ and pushing forward on all other models. $D_\pi$ is called a \emph{determination} of $\overline{D}$.
In other words, there is an injection $CDiv(\mathfrak{X})\hookrightarrow Div(\mathfrak{X})$, due to the fact that $\pi_*\pi^*(D)=D$  when $\pi$ is a birational map.
Cartier and Weil divisors on $\mathfrak{X}$ are called  \emph{b--divisors} of $X$, to recall that they are divisors up to birational equivalence.
We set
$\Div_{\mathbb{R}}(\mathfrak{X})=\Div(\mathfrak{X})\otimes \mathbb{R}$ and $\CDiv_{\mathbb{R}}(\mathfrak{X})=\CDiv(\mathfrak{X})\otimes \mathbb{R}$  the $\mathbb{R}$--Weil b--divisors and the $\mathbb{R}$--Cartier b--divisors respectively.

Since nefness and bigness are stable under pullbacks by birational
morphisms, we can refer to nefness and bigness of Cartier b--divisors.
Since the valuation of a divisor is preserved by pullback,
$v(\overline{D})$ is well defined for every valuation $v$ and
every Cartier b--divisor $\overline{D}$. In the case of
a divisorial valuation $v$, one can even define the valuation
of a Weil b--divisor, as follows. Let $X_\pi$ be a model
in which there is a prime divisor $E\in\CDiv(X)$ with $v=t\cdot \ord_E$
for some $t\in \bbR$; then for every b--divisor $\overline{W}$,
set $v(\overline{W})=(t\cdot \ord_E)(\overline{W})$ to be equal to
$t$ times the coefficient of $E$ in $W_\pi$.
A b--divisor can therefore be interpreted as a function
$v\mapsto v(\overline{W})$ on the set $\mathcal{V}$ of divisorial
valuations of $X$.
Since distinct b--divisors clearly give distinct functions,
we obtain an immersion
$$
\Div_{\mathbb{R}}(\mathfrak{X})\hookrightarrow \mathbb{R}^{\mathcal{V}}
=\Pi_{v\in \mathcal{V}}\mathbb{R}=\operatorname{func}(\mathcal{V},\bbR) ,
 $$
that is then used to endow the set of b--divisors  with
the topology induced by the topology of pointwise convergence on
 $\mathbb{R}^\mathcal{V}$;
this is called the \emph{topology of  coefficent--wise convergence} on
$\Div_{\mathbb{R}}(\mathfrak{X})$,
for which $\lim_j \overline{W}_j = \overline{W}$ if and only if $\lim_j v_{E_\pi}(W_j)_\pi = v_{E_\pi}(W_\pi)$ for each
prime divisor $E_\pi$ on the model $X_\pi \rightarrow X$.

\medskip

One can also consider the group of Cartier b--divisors modulo numerical equivalence, defining
the Neron--Severi space
$$
N^1(\mathfrak{X})\otimes \mathbb{R}=N^1(\mathfrak{X})_\mathbb{R}=\lim_\rightarrow N^1(X_\pi)
$$
where the maps defining the projective limit are given by pulling back:
a class is determined by the class of a Cartier divisor
in some blow up of $X$.

Assume from now on that $X$ is a surface.
In that case, the group of 1--dimensional numerical classes
of $\mathfrak{X}$ is
$$
N_1(\mathfrak{X})=\lim_\leftarrow N^1(X_\pi),
$$
here the maps are given by push--forward (on arbitrary dimension,
the $(n-1)$--dimensional numerical classes are defined by the projective
limit on the smooth models). The limit topology on $N^1(\mathfrak{X})$
and $ N_1(\mathfrak{X})$ is compatible with the topology of
coefficient-wise convergence defined above for $\Div_\bbR{\mathfrak X}$.

There is  a natural injection $N^1(\mathfrak{X})\hookrightarrow N_1(\mathfrak{X})$, by identifying a class $\beta\in N^1(\mathfrak{X})$ to the class $\overline{\beta}\in N_1(\mathfrak{X})$ determined by
pulling back $\beta$ on all higher models: by definition, the injection is continuous in the projective limit topology and $N^1(\mathfrak{X})$ is dense in $N_1(\mathfrak{X})$ \cite[1.9]{BdFF}.

\subsubsection*{Relative Zariski decomposition}
Zariski ---in what can be considered
a foundational work of the asymptotic theory of linear systems---
showed in \cite{Zar62} that
any effective divisor \(D\) on a smooth surface
can be decomposed as a sum of a \emph{positive} (nef,
accountable for all sections in $H^0(X,mD)$)
and an effective \emph{negative} part (whose multiples are
a fixed part in all multiples of the linear series $D$)
with the following properties:

\begin{theorem}[Zariski decomposition]
Every pseudoeffective \(\bbQ\)-divisor \(D\) on a smooth surface $X$
admits a unique decomposition
\(
D=P+N,
\)
where \(P\) is a nef \(\bbQ\)-divisor, \(N\) is an effective
\(\bbQ\)-divisor, and if $N$ is nonzero then
the components \(N_i\) of \(N\) have negative definite intersection matrix,
and \(P\cdot N_i=0\).
\end{theorem}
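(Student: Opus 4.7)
The plan is to follow Zariski's original construction \cite{Zar62}: identify the components of $N$ as a negative-definite configuration of irreducible curves, determine their coefficients by solving a linear system made invertible by the Hodge index theorem, and verify nefness of the residue $P = D - N$. Uniqueness will follow from a short intersection-theoretic computation.

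The technical core is a \emph{negativity lemma}: if $C_1, \dots, C_r$ are distinct irreducible curves on $X$ admitting an effective combination $\sum b_j C_j$ with $b_j > 0$ such that $(\sum_j b_j C_j) \cdot C_i < 0$ for every $i$, then the intersection matrix $M = (C_i \cdot C_j)$ is negative definite. This is a standard consequence of the Hodge index theorem: since the signature of the intersection form on $N^1(X)_\bbR$ is $(1, \rho(X) - 1)$, the existence of an ample class pairing positively with $\sum b_j C_j$ together with $(\sum b_j C_j)^2 < 0$ prevents the span of the $C_i$ from containing any positive direction. Consequently the $C_i$ are linearly independent and their number is bounded by $\rho(X) = \dim N^1(X)_\bbR$.

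With the lemma, I would construct $(P, N)$ by choosing a maximal finite family $\{C_1, \dots, C_r\}$ of irreducible curves admitting non-negative rationals $a_1, \dots, a_r$ with $(D - \sum_j a_j C_j) \cdot C_i = 0$ for every $i$; such a family exists since its cardinality is bounded by $\rho(X)$. Setting $N = \sum a_j C_j$ and $P = D - N$, by construction $P \cdot C_i = 0$ for all $i$, and the intersection matrix of the components of $N$ is negative definite by the lemma. Nefness of $P$ would follow from maximality: if some irreducible curve $C$ had $P \cdot C < 0$, then adjoining $C$ to the configuration and updating the coefficients (using invertibility of the extended intersection matrix) would produce a strictly larger valid family, contradicting maximality.

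For uniqueness, given a second decomposition $D = P' + N'$, the identity $N - N' = P' - P$ combined with $P \cdot N = P' \cdot N' = 0$ and nefness of both $P$ and $P'$ yields
\[
(N - N')^2 = (N - N') \cdot (P' - P) = N \cdot P' + N' \cdot P \ge 0 ,
\]
while $N - N'$ is supported on the combined, still negative-definite configuration of components of $N$ and $N'$, forcing $(N - N')^2 \le 0$. Hence $N = N'$ and $P = P'$. The main obstacle is showing that the extension in the nefness argument really does have non-negative coefficients, so that maximality is a meaningful notion: this requires exploiting the sign pattern of $M^{-1}$ for negative-definite intersection matrices of irreducible curves (whose off-diagonal entries $C_i \cdot C_j$ are non-negative), which guarantees that introducing an extra curve with $P \cdot C < 0$ can only increase the coefficients of the already-present curves, preserving positivity.
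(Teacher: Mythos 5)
The paper does not actually prove this theorem: it states it, attributes the $\bbQ$-pseudoeffective case to Fujita, and refers to \cite{Bad01} and \cite{Bauer09} for proofs. So there is no internal argument to compare against; your proposal is a free-standing sketch of the classical Zariski--Fujita construction, which is indeed the route the paper's references take (as opposed to, say, Bauer's more recent proof via the unique maximal nef sub-divisor $P\le D$). The overall plan is sound, but two steps are not correct as written.

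First, the negativity lemma is misattributed to the Hodge index theorem alone. From ``$H\cdot\alpha>0$ and $\alpha^2<0$'' (where $\alpha=\sum b_jC_j$) it does not follow that the span of the $C_i$ contains no class of positive square; the Hodge index theorem only forbids a two-dimensional subspace on which the form is positive definite, and the span could perfectly well contain one positive direction. What actually makes the lemma work is the geometric input $C_i\cdot C_j\ge 0$ for $i\ne j$: setting $a_i=b_i$, one has $C_i^2\le -\sum_{j\ne i}\frac{a_j}{a_i}C_i\cdot C_j$, and then for any real $x$,
\[
\Bigl(\sum x_iC_i\Bigr)^2\le -\tfrac12\sum_{i\ne j}a_ia_j\Bigl(\tfrac{x_i}{a_i}-\tfrac{x_j}{a_j}\Bigr)^2C_i\cdot C_j\le 0,
\]
with equality only when all $x_i/a_i$ coincide, which the strict inequalities $\alpha\cdot C_i<0$ rule out. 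Equivalently, $-M$ is a symmetric matrix with non-positive off-diagonal entries admitting a positive vector $b$ with $-Mb>0$, hence a (diagonally dominant after scaling, therefore positive definite) M-matrix. Hodge index only enters to bound $r\le\rho(X)-1$ once negative definiteness is established. You do invoke the M-matrix sign structure later for the coefficient-monotonicity argument, so it is slightly inconsistent not to use it where it is truly indispensable.

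Second, in the uniqueness step you assert that $N-N'$ is ``supported on the combined, still negative-definite configuration of components of $N$ and $N'$.'' That the union of the two configurations is negative definite is not known a priori (it would follow from uniqueness, which is what is being proved). The standard fix: write $N-N'=F^+-F^-$ with $F^+, F^-$ effective and without common components; then $F^+$ is supported on components of $N$ and $F^-$ on components of $N'$, each of which is separately negative definite, so $(F^+)^2\le 0$, $(F^-)^2\le 0$, and since $F^+\cdot F^-\ge 0$ one gets $(N-N')^2\le (F^+)^2+(F^-)^2\le 0$. Combined with your (correct) inequality $(N-N')^2\ge 0$ this forces $(F^+)^2=(F^-)^2=0$ and hence $F^+=F^-=0$ by negative definiteness. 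With these two repairs the argument goes through; the remaining assertions about existence of a maximal configuration and the extension step are standard and can be carried out as you outline.
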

The generalization to pseudoeffective \(\bbQ\)-divisors is due to Fujita \cite{Fuj79}. We refer to \cite{Bad01}  and to the more recent and nice \cite{Bauer09} for a proof.
The Zariski decomposition is a most powerful tool;
from our viewpoint, since $\oplus _{m\geq0} H^0(X,mD)= \oplus _{m\geq0} H^0(X,\lfloor mP\rfloor)$, Zariski decomposition allows us to reduce the question of finite generation of $\oplus _{m\geq0} H^0(X,mD)$ to the case where $D$ is nef.
On the other hand, in the previous section, the notion of nef divisor
relative to a morphism (more specifically, relative to the blow up morphism
of the points of a cluster) became important to study the valuation
ideals of a rank 1 valuation. The notion of relative nefness naturally
leads to a notion of \emph{relative} Zariski decomposition, that
has been considered in the literature \cite{Mor86}, \cite{CS93}
in more general settings and
for different purposes. The version most useful for us is the following.

\begin{theorem}[Relative Zariski decomposition]\label{thm:relzariski}
	Let $\pi:X_\pi\rightarrow X$ be a birational morphism of
	smooth surfaces. Every \(\bbQ\)-divisor \(D\) on $X_\pi$
	admits a unique decomposition
	\(
	D=P_\pi+N_\pi,
	\)
	where \(P_\pi\) is a \(\bbQ\)-divisor nef relative to $\pi$,
	\(N_\pi\) is an effective \(\bbQ\)-divisor with $\pi_*(N_\pi)=0$,
	and if $N_\pi$ is nonzero then
	the components \(N_i\) of \(N_\pi\) have negative definite intersection matrix, and \(P\cdot N_i=0\).
\end{theorem}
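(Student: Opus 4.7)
The plan is to adapt the classical proof of Zariski decomposition to the relative setting, using the fact that for a birational morphism $\pi\colon X_\pi\to X$ of smooth surfaces the irreducible exceptional prime divisors $E_1,\dots,E_s$ of $\pi$ have negative-definite intersection matrix (a classical theorem going back to Mumford). Since the condition $\pi_\ast(N_\pi)=0$ on an effective divisor forces $N_\pi$ to be supported on the $E_i$, the entire decomposition takes place inside this negative-definite sublattice.

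For uniqueness, suppose $D=P_\pi+N_\pi=P'_\pi+N'_\pi$ are two such decompositions. Set $M=N_\pi-N'_\pi=P'_\pi-P_\pi$ and write $M=M^+-M^-$ as the difference of its effective and anti-effective parts, with disjoint supports; both are supported on the $E_i$. Every component of $M^+$ lies in $\operatorname{Supp}(N_\pi)$, so $P_\pi\cdot M^+=0$, while $\pi$-nefness of $P'_\pi$ combined with the fact that the components of $M^+$ are contracted by $\pi$ gives $P'_\pi\cdot M^+\ge 0$. Therefore
\[
(M^+)^2 \;=\; M^+\cdot(P'_\pi-P_\pi)\;+\;M^+\cdot M^- \;\ge\;0,
\]
the last term being nonnegative because the supports are disjoint. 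But $M^+$ is supported on the exceptional configuration, whose intersection form is negative definite, so $(M^+)^2\le 0$ with equality only when $M^+=0$. Symmetrically $M^-=0$, so $M=0$ and the decomposition is unique.

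For existence I would run the unloading procedure, exactly as described in Remark~3.11: start with $(P^{(0)},N^{(0)})=(D,0)$; whenever $P^{(k)}$ is not $\pi$-nef, pick $E_j$ with $P^{(k)}\cdot E_j<0$, set $a_k=P^{(k)}\cdot E_j/E_j^{\,2}>0$, and define $P^{(k+1)}=P^{(k)}-a_kE_j$, $N^{(k+1)}=N^{(k)}+a_kE_j$. A direct computation gives $(P^{(k+1)})^2=(P^{(k)})^2-(P^{(k)}\cdot E_j)^2/E_j^{\,2}>(P^{(k)})^2$, so the sequence of self-intersections is strictly increasing. The main obstacle is proving termination and extracting the full list of properties in the statement: later unloadings can in principle spoil equalities $P\cdot E_i=0$ achieved at earlier steps, so simply stopping when the positive part first becomes $\pi$-nef need not give $P_\pi\cdot N_i=0$ for every component $N_i$ of $N_\pi$.

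To handle this cleanly, I would bypass the iteration and instead pose the existence problem as a strictly convex quadratic program: find $a=(a_1,\dots,a_s)\in\bbQ_{\ge 0}^{\,s}$ minimizing the quadratic function
\[
\Phi(a) \;=\; -\Bigl(D-\sum_i a_iE_i\Bigr)^{2}.
\]
Negative definiteness of $(E_i\cdot E_j)$ means the Hessian of $\Phi$ is positive definite, so $\Phi$ is strictly convex and coercive on the orthant; hence it admits a unique minimizer $a^\ast$, which is rational because the defining KKT system has rational coefficients. The KKT stationarity and complementary slackness conditions at $a^\ast$ read exactly $a^\ast_i\ge 0$, $(D-\sum a^\ast_jE_j)\cdot E_i\ge 0$ for all $i$, and $a^\ast_i\cdot\bigl((D-\sum a^\ast_jE_j)\cdot E_i\bigr)=0$, which are the three properties required of $(P_\pi,N_\pi)$ with $N_\pi=\sum a^\ast_iE_i$. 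This gives existence together with the termination of the unloading procedure as a byproduct. Throughout, the negative definiteness of the intersection form on the exceptional sublattice is the single algebraic input that makes both the uniqueness argument and the convex-optimization existence argument work.
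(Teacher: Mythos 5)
Your proof is correct. Note that the paper does not actually supply a proof of this theorem: it is left as an exercise, with the hint that one should exploit the negative definiteness of the intersection matrix of the $\pi$-exceptional curves to ``solve for the coefficients of $N_\pi$.'' Your argument uses exactly this key input, so at the level of the central idea you are following the paper's intended route. What you add is a clean packaging of the existence step: rather than arguing termination of the unloading procedure and then checking that the terminal configuration satisfies $P_\pi\cdot N_i=0$ for every component of $N_\pi$ (which, as you observe, is not automatic if one merely stops at the first $\pi$-nef state), you set up the strictly convex program
\[
\Phi(a)\;=\;-\Bigl(D-\sum_i a_iE_i\Bigr)^{2},\qquad a\in\bbQ_{\ge 0}^{\,s},
\]
whose Hessian $-2(E_i\cdot E_j)$ is positive definite by Mumford's theorem. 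Coercivity and strict convexity give a unique minimizer $a^\ast$, and the KKT conditions translate verbatim into $a^\ast_i\ge 0$, $\pi$-nefness of $P_\pi=D-\sum a^\ast_iE_i$, and the complementary-slackness orthogonality $a^\ast_i\,(P_\pi\cdot E_i)=0$; rationality of $a^\ast$ follows because the active KKT equations form a rational linear system with a negative-definite (hence invertible) coefficient matrix, and negative definiteness of the submatrix on $\operatorname{Supp}(N_\pi)$ is a principal-submatrix statement. Your uniqueness argument via the decomposition $M=M^+-M^-$ and the inequalities $M^+\cdot P_\pi=0$, $M^+\cdot P'_\pi\ge 0$, $M^+\cdot M^-\ge 0$ forcing $(M^+)^2\ge 0$ is the standard Zariski argument transposed to the exceptional sublattice and is correct. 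The convex-optimization framing is a genuinely tidier route to existence than iterated unloading, and it sidesteps the termination and orthogonality issues you flag; the one thing the unloading approach buys that yours does not emphasize is the explicit algorithmic computation of the weights, which the paper uses elsewhere (in the remark on unloading clusters), but that is orthogonal to the correctness of the theorem.
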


If $D$ is pseudoeffective, then the relative $D=P_\pi+N_\pi$
and absolute $D=P+N$ Zariski
decompositions are related; $N$ (respectively $N_\pi$)
is the smallest effective  $\mathbb{Q}$--divisor such that $P=D-N$
(respectively $P_\pi=D-N_\pi)$ is nef (respectively nef
relative to $\pi$); therefore
$N_\pi\le N$. However, the relative version
is far easier to prove!

\begin{exercise}
	Prove Theorem \ref{thm:relzariski}.\\
	\emph{Hint}: Let $E_1, \dots, E_n$ be the finite set of curves
	contracted by $\pi$. You can use the well-known fact that the
	intersection matrix of $(E_1,\dots,E_n)$ is negative definite
	to solve for the coefficients of $N_\pi=a_1E_1+\dots+a_nE_n$.
\end{exercise}

\begin{example}
	Let $X=\bbP^2$, and $L\subset \bbP^2$ a line.
	Given be a divisorial valuation $v$ on
	$K(X)$, with associated weighted cluster
	$K=(p_1^{v_1},\dots,p_s^{v_s})$, let
	$\pi_K:X_K\rightarrow \bbP^2$ be the blowup of all points in
	$K$, and let $E_1, \dots, E_s$ be the (total transforms of the)
	exceptional divisors. Then positive part of
	the Zariski decomposition of $-E_s$
	is the divisor $D_v$ of Theorem \ref{thm:valdivisor}.
	Therefore, for every $\delta$, the Zariski decomposition of
	$\delta \pi_K^*(L)-mE_s$ has positive part
	$\delta \pi_K^*(L)+mD_v$
\end{example}
Zariski decompositions are preserved by pullbacks,
because nefness is,
so it is natural to ask about
a Zariski-type decomposition for b--divisors.
A b--divisor $\overline{P}$ on X  is b--nef if there is a
determination $P_\pi$ of $\overline{P}$ on a model
$\pi: X_\pi \rightarrow X$ such that $P_\pi$ is nef.
This question has been addressed by A.~Küronya and C.~Maclean in
\cite{KM08}, showing that such a decomposition exists
for b--divisors on (normal) varieties of arbitrary dimension.

\begin{theorem}[Zariski decomposition for b--divisors, \cite{KM08}]
  Let $X$ be a smooth projective surface, $D$  an effective
  $\mathbb{Q}$--b--divisor on $X$. There is a unique decomposition
  $D = P +N$, where $P, N$ are effective $\mathbb{Q}$--b--divisors,
  such that $ H^0(X,\lfloor mD\rfloor)= H^0(X,\lfloor mP_D\rfloor)$, and
$P_D$ is a limit of b-nef b--divisors on every proper birational model $Y\rightarrow X$, and for any nef b--divisor $P'\leq D$ the inequality $P'\leq P_D$ holds.
\end{theorem}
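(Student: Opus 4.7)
The plan is to construct the positive part $P$ by applying the classical Zariski decomposition (Theorem of Zariski and Fujita) on every birational model and then passing to a suitable limit, mimicking the construction already available in the relative setting of Theorem \ref{thm:relzariski}. First, on each model $\pi:X_\pi\to X$, I would form $D_\pi = P_\pi + N_\pi$ where $P_\pi$ is the maximal nef $\mathbb Q$-divisor with $P_\pi\le D_\pi$ and $N_\pi$ is the corresponding effective negative part on the surface $X_\pi$.

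The key technical input is the following monotonicity: for any morphism $\mu:X_{\pi'}\to X_\pi$ between models, one has $\mu_\ast P_{\pi'}\le P_\pi$. This uses that the pushforward of a nef divisor under a birational morphism of smooth surfaces is again nef (by the projection formula applied to $\mu^\ast C$ for each curve $C\subset X_\pi$, which is effective), together with $\mu_\ast P_{\pi'}\le \mu_\ast D_{\pi'} = D_\pi$ and the maximality of $P_\pi$. Fix any model $X_{\pi_0}$; as $\pi$ ranges over models dominating $\pi_0$, the pushforwards $(\mu_{\pi\to\pi_0})_\ast P_\pi$ form a monotone decreasing net of effective $\mathbb Q$-divisors bounded below by zero, and hence converge coefficient-wise. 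I would \emph{define} the trace of $P$ on $X_{\pi_0}$ to be this limit; cofinality of the directed system guarantees that these traces are compatible under pushforward and so glue to a well-defined Weil $\mathbb Q$-b-divisor $P$. Setting $N:=D-P$ produces an effective b-divisor, since $P_\pi \le D_\pi$ on every model.

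It then remains to verify the three properties. The b-nef approximation statement is built into the construction: each $P_\pi$ determines a b-nef Cartier b-divisor $\overline{P_\pi}$, and by construction their traces on any fixed model tend coefficient-wise to those of $P$. The cohomology equality $H^0(X,\lfloor mD\rfloor)=H^0(X,\lfloor mP\rfloor)$ reduces model by model to the classical identity $H^0(X_\pi,\lfloor mD_\pi\rfloor)=H^0(X_\pi,\lfloor mP_\pi\rfloor)$: combined with $v(P)\le v(P_{\pi_v})$ at every divisorial valuation $v$ (centered on a model $X_{\pi_v}$), every section $s$ of $\lfloor mD\rfloor$ satisfies $v(s)\ge mv(P)$ and so lies in $H^0(X,\lfloor mP\rfloor)$. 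For maximality, a Cartier b-nef $P'\le D$ determined on some $X_{\pi_0}$ remains nef on every higher model $X_\pi$ (pullback of nef) and is still dominated by $D_\pi$, hence $P'_\pi\le P_\pi$; pushing forward and passing to the limit yields $P'\le P$, and the case of $P'$ a limit of b-nef b-divisors follows by approximation.

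The hard part will be controlling the limit construction rigorously: one must show that the decreasing pushforwards converge to a well-defined divisor on each model and check that the resulting Weil b-divisor is independent of the cofinal subsystem used. This is precisely where the surface hypothesis intervenes, through the nefness of pushforwards, which fails in higher dimensions and forces Küronya-MacLean to replace the Zariski decomposition on each model by a finer construction based on asymptotic graded linear series.
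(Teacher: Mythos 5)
Note first that the paper itself does not prove this theorem: it is cited from Küronya--MacLean \cite{KM08}, and the paper only records, in the remark that follows, the identification of $\overline{P}_D$ as the maximal nef $\mathbb{Q}$--b--divisor below $\overline{D}$. Your construction of $P$ as the coefficient-wise limit of pushforwards of the Fujita--Zariski positive parts $P_\pi$ is sound: the monotonicity $\mu_*P_{\pi'}\le P_\pi$ does follow from the nefness of $\mu_*P_{\pi'}$ (your projection-formula argument is correct on surfaces) together with the characterization of the classical positive part as the maximal nef divisor below $D_\pi$; the net $\mu_*P_\pi$ is pinched between $0$ and $D_{\pi_0}$, so it converges; and the maximality and b-nef approximation claims then follow as you describe.

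The gap is in your verification of $H^0(\lfloor mD\rfloor)=H^0(\lfloor mP\rfloor)$. You invoke classical Zariski only on the model $X_{\pi_v}$ where a given divisorial valuation $v$ first appears, obtaining $v(s)\ge -\lfloor m\,v(P_{\pi_v})\rfloor$ for $s\in H^0(\lfloor mD\rfloor)$, and then quote $v(P)\le v(P_{\pi_v})$. But that inequality runs the wrong way: the required bound $v(s)\ge -\lfloor m\,v(P)\rfloor$ is strictly \emph{stronger} than the bound $v(s)\ge -\lfloor m\,v(P_{\pi_v})\rfloor$ that classical Zariski on $X_{\pi_v}$ supplies, and the strict inequality $v(P)<v(P_{\pi_v})$ genuinely occurs. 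For instance, let $\overline{D}$ be the Weil b--divisor whose trace on every model is the strict transform of a fixed line $L\subset\mathbb{P}^2$: after two blowups of points of $L$ the strict transform is a $(-1)$-curve, so $P_\pi=0$ on every such model, giving $\operatorname{ord}_L(P)=0$ while $\operatorname{ord}_L(P_{\pi_0})=1$. The cohomology identity is nonetheless true for your $P$, but the proof must use \emph{all} models: for every $\pi\ge\pi_0$ push forward $\operatorname{div}_{X_\pi}(s)+\lfloor mP_\pi\rfloor\ge 0$ to obtain $\operatorname{div}_{X_{\pi_0}}(s)+\lfloor m\mu_*P_\pi\rfloor\ge 0$, and then pass to the limit over $\pi$, noting that for a decreasing net of rational coefficients the integers $\lfloor m\,\operatorname{ord}_E(\mu_*P_\pi)\rfloor$ eventually equal $\lfloor m\,\operatorname{ord}_E(P)\rfloor$ at each prime divisor $E$ of $X_{\pi_0}$.
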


\begin{remark} Given a b--divisor $\overline{D}$, the associated b-divisorial sheaf $\cO_X(\overline{D})$ is defined on an open subset $U$ by $
\Gamma(U,\mathcal{O}_X(\overline{D}))
= \{\varphi\in  K(X) | (div_X\varphi + \overline{D})|_U \geq 0\} $
(see \cite{KM08} or \cite{BFJ09}).
It is not a coherent sheaf, but there is  a natural  inclusion
$H^0(X, \mathcal{O}_X(\overline{D}))\hookrightarrow H^0(X, \mathcal{O}_X({D}))$ thus
$H^0(X, \mathcal{O}_X(\overline{D}))$ is   finite-dimensional.

The positive part of $\mathbb{Q}$-b-divisor $\overline{D}$ on $X$ in the theorem is
$$
\overline{P}_D= \max\{\overline{P} | \overline{P} \text{ a nef $\mathbb{Q}$--b--divisor }, \overline{P}\leq \overline{D}\}.
$$

\end{remark}

\subsection{Waldschmidt function through cones in $N^1(\mathfrak{X})_\bbR$}
Fix the origin  point $p_1$ on $\mathbb{P}^2$,
and affine coordinates $x,y$ around it. For every power series
$\xi \in \bbC[[x]]$ and every real number $t\ge1$ consider the
valuation $\vv{\xi}{t}$ defined in section \ref{space_of_valuations}.
Whenever $t\in \bbQ$, Theorem \ref{thm:valdivisor} provides an
associated cluster $K=(p_1^{v_1},\dots,p_s^{v_s})$
and a relatively nef divisor $D_{\vv{\xi}{t}}$,
which we have shown to be the positive part of the relative Zariski
decomposition of $-v_sE_s$.
Set $D_{\xi,t}$ the Cartier b--divisor on $\mathfrak{X}$
defined by $D_{\vv{\xi}{t}}$.

We are now ready to give an alternative proof of Proposition
\ref{continuoust},
which relates the continuity of the Waldschmidt function with
the closed convex nature of the Mori cone.

We will consider the map $\operatorname{div}$ extended by continuity to
$[0,\infty)\times[1,\infty)$.
\begin{proposition}
Let $L$ be the class in $N^1(\mathfrak{X})_\bbR$ of a line in
$\bbP^2$, and fix a series $\xi\in \bbC[[x]]$.
For every rational $t$, set $D_{\xi,t}$
the associated Cartier b--divisor. The function
\begin{align*}
\operatorname{div}:\bbR_{\ge 0} \times \bbQ_{\ge1}& \longrightarrow N^1(\mathfrak{X})_\bbR\\
(a,t)& \longmapsto aL-D_{\xi,t}
\end{align*}
is continuous.
\end{proposition}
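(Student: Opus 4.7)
My plan is to reduce continuity of $\operatorname{div}$ to a coefficient-wise continuity statement for $t\mapsto D_{\xi,t}$, and then to verify the latter using an explicit formula for $D_{\vv{\xi}{t}}$ as a $\bbQ$--Cartier b--divisor. Since $a\mapsto aL$ is linear in $a$ and independent of $t$, continuity of $\operatorname{div}$ reduces to continuity of $t\mapsto D_{\xi,t}:\bbQ_{\ge 1}\to N^1(\mathfrak X)_\bbR$. The limit topology on $N^1(\mathfrak X)_\bbR$ is compatible with coefficient-wise convergence via the embedding $\Div_\bbR(\mathfrak X)\hookrightarrow\bbR^{\mathcal V}$, so it suffices to show that for every divisorial valuation $w\in\mathcal V$ the real-valued function $t\mapsto w(D_{\xi,t})$ is continuous.

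The analysis of $w(D_{\xi,t})$ rests on the following identity, which is the non-integer extension of Exercise \ref{simple}: letting $K_t=(p_1^{v_1(t)},\dots,p_{s(t)}^{v_{s(t)}(t)})$ denote the cluster of $\vv{\xi}{t}$ and $E_i(t)$ the total transforms of its exceptional divisors on $X_{K_t}$, one has
\[
D_{\vv{\xi}{t}} \;=\; -\frac{1}{\sum_i v_i(t)^2}\sum_i v_i(t)\, E_i(t).
\]
This is verified directly by substituting the right--hand side into the intersection conditions of Theorem \ref{thm:valdivisor} and telescoping the resulting sums with the proximity equality of Remark \ref{rm:proximity}. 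For $t$ ranging over an open interval on which the continued fraction expansion of $t$ has fixed length and fixed proximity graph (Remark \ref{ContFracEx}), the weights $v_i(t)$ are affine linear in $t$, the set of primes involved is constant, and the formula exhibits $w(D_{\xi,t})$ as a rational function of $t$ without poles; continuity on the interior of each such interval is then automatic.

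The crux of the argument, and the main obstacle, is continuity at the transition rationals $t_0$ where the continued fraction of $t$ gets lengthened for nearby $t$: for instance, at an integer $t_0=n\in\bbN$, a neighbouring rational $t=t_0+1/k$ has expansion $[t_0;k]$, and $K_t$ acquires $k$ additional satellite centers proximate to $p_{t_0}$, each with weight $1/k$. To verify continuity at such a $t_0$, I will work on a common model $X_\sigma$ dominating both $X_{K_{t_0}}$ and $X_{K_t}$, use the proximity relations (Remark \ref{proximate-irrdiv}) to expand each total transform $E_i(t)$ in the basis of proper transforms on $X_\sigma$, substitute into the formula above, and check that each coefficient converges as $t\to t_0$. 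The decisive fact making this work is that although the number of centers of $K_t$ grows unboundedly as $t\to t_0$, the new satellites enter with weights tending to $0$: the coefficient of any fixed prime divisor on $X_\sigma$ in the trace of $D_{\xi,t}$ stabilizes (becoming even constant in $k$ for $k$ large enough) and the stabilized value matches the corresponding coefficient in $D_{\xi,t_0}$. Combined with continuity on the stable intervals, this yields the claim.
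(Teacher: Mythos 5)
Your proposal makes the same initial reduction as the paper (to coefficient-wise continuity on $N^1(\mathfrak X)_\bbR$, i.e.\ to continuity of $t\mapsto w(D_{\xi,t})$ for each divisorial valuation $w$), but verifies it by a genuinely different argument. The paper's verification is a one-liner: it observes that $t\mapsto\vv{\xi}{t}(D_\pi)$ is continuous for every prime $D_\pi$ (the Newton-polygon description of Remark~\ref{rem:Newton}), leaving implicit the duality that converts this into continuity of the coefficient of $D_\pi$ in $D_{\xi,t}$ --- concretely, if $C$ is a curve general for $\ord_{D_\pi}$ then $\ord_{D_\pi}(D_{\vv{\xi}{t}}) = -\tfrac{1}{t}\,\vv{\xi}{t}(C)$, and $\vv{\xi}{t}(C)$ is a tropical polynomial in $t$. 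You instead compute the coefficient directly from $D_{\vv{\xi}{t}}=-\tfrac{1}{\sum v_i^2}\sum v_i E_i$ and the cluster combinatorics. That is a more hands-on route; it trades the elegance of the valuation-theoretic shortcut for explicit control of the weights.

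However, the ``decisive fact'' you invoke at the transition points is not correct as written: the coefficient of a fixed prime $E$ in $D_{\xi,t}$ does \emph{not} stabilize, becoming constant in $k$, for $k$ large. Already for $E=E_1$ the coefficient is $-v_1/\sum v_i^2 = -1/t$, which converges as $t\to t_0$ but is never eventually constant, because the normalizing denominator $\sum v_i(t)^2 = t$ moves with $t$. What does stabilize is the \emph{unnormalized} sum $\sum_i v_i\,\ord_E(E_i)$: only the finitely many centers lying below $E$ can contribute, and those centers and their weights are governed by the leading partial quotients of $t$, which agree with those of $t_0$ once $t$ is close enough. One must then still divide by $\sum v_i^2 = t$ and observe that the quotient converges. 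Two further imprecisions: there is no open interval of $\bbQ_{\ge 1}$ on which the continued fraction of $t$ has fixed length (only the leading partial quotients are locally constant), and no single model $X_\sigma$ dominates all $X_{K_t}$ for $t$ near $t_0$ (the number of centers is unbounded), so the transition analysis must be phrased as a limit at $t_0$ rather than as an interval or common-model statement. With these repairs your plan goes through and provides, in cluster-combinatorial terms, the details the paper's terse proof leaves to the reader.
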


\begin{proof}
	The topology of $N^1(\mathfrak{X})_\bbR$
	is induced by the topology of coefficientwise convergence,
	so it is enough to observe that for every prime divisor $D_\pi$
	on every model $X_\pi$ the map $t\mapsto\vv{\xi}{t}(D_\pi)$
	is continuous.
\end{proof}

\begin{proof}[Alternative proof of Theorem \ref{continuoust} (sketch)]
	We will prove that $\widehat\alpha$ is continuous (for the
	strong topology).
	
	The Mori cone $\Mor(\mathfrak{X})_\bbR$
	in $N^1(\mathfrak{X})_\bbR$ is a closed convex cone.
	Because it is closed, and $\operatorname{div}$ is continuous,
	its preimage $\operatorname{div}^{-1}(\Mor(\mathfrak{X})_\bbR)$
	is a closed set. Therefore
	\[
	\widehat\alpha(\xi,t)=\min\{a\,|\,\operatorname{div}(a,t)\in
	\Mor(\mathfrak{X})_\bbR\}
	\]
	is lower semicontinuous as a function of $t$.
	Then using that $\Mor(\mathfrak{X})_\bbR$ is convex,
	it follows that $\widehat\alpha$ is actually continuous.
\end{proof}

\bibliographystyle{plain}
{\footnotesize\bibliography{BibNagata}{}

\noindent Joaquim Roé
\\Universitat Autònoma de Barcelona, Departament de Matemàtiques
\\08193 Bellaterra (Barcelona) Spain. \url{jroe@mat.uab.cat}.
\\ \\Paola Supino,
\\Università degli Studi Roma Tre, Dipartimento di Matematica e Fisica, Sezione di Matematica,
\\Largo San Leonardo Murialdo 1,
00146, Roma, Italy.
\url{supino@mat.uniroma3.it}}

\end{document}